\let\@fnsymbol\@arabic
\definecolor{marin}{rgb}{0.,0.3,0.7}
\newcommand{\si}{\sigma}
\newcommand{\ph}{\varphi}
\newcommand{\Om}{\Omega}
\newcommand{\Ic}{\mathcal{I}}
\providecommand{\abs}[1]{\lvert#1\rvert}
\providecommand{\absbig}[1]{\bigl\lvert#1\bigr\rvert}
\providecommand{\kla}[1]{(#1)}
\providecommand{\klabig}[1]{\bigl(#1\bigr)}
\providecommand{\klaBig}[1]{\Bigl(#1\Bigr)}
\providecommand{\klabigg}[1]{\biggl(#1\biggr)}
\providecommand{\klaBigg}[1]{\Biggl(#1\Biggr)}
\providecommand{\norm}[1]{\lVert#1\rVert}
\providecommand{\normbig}[1]{\bigl\lVert#1\bigr\rVert}
\providecommand{\normBig}[1]{\Bigl\lVert#1\Bigr\rVert}
\providecommand{\normbigg}[1]{\biggl\lVert#1\biggr\rVert}
\providecommand{\normv}[1]{\ensuremath{{\lVert\hskip-1pt\lvert}#1{\rvert\hskip-1pt\rVert}}}
\providecommand{\normvbig}[1]{\ensuremath{{\bigl\lVert\hskip-1pt\bigl\lvert}#1{\bigr\rvert\hskip-1pt\bigr\rVert}}}
\newcommand\myfor{\qquad\text{for}\qquad}
\newcommand\myand{\qquad\text{and}\qquad}
\newcommand{\sfrac}[2]{\mbox{$\textstyle\frac{#1}{#2}$}}
\newcommand{\iu}{\mathrm{i}}
\newcommand{\e}{\mathrm{e}}
\newcommand{\drm}{\mathrm{d}}
\DeclareMathOperator{\ReT}{Re}
\DeclareMathOperator{\sinc}{sinc}
\newtheorem{theorem}{Theorem}[section]
\newtheorem{lemma}[theorem]{Lemma}
\newtheorem{proposition}[theorem]{Proposition}
\numberwithin{equation}{section}
\title{On a splitting method for the Zakharov system}
\author{Ludwig Gauckler\,\thanks{Institut f\"ur Mathematik,
          Freie Universit\"at Berlin,
          Arnimallee 9,
          D-14195 Berlin, Germany
          ({\tt gauckler@math.fu-berlin.de}).}
}
\date{Version of 15 November 2017}
\begin{document}

\maketitle

\begin{abstract}
An error analysis of a splitting method applied to the Zakharov system is given. The numerical method is a Lie--Trotter splitting in time that is combined with a Fourier collocation in space to a fully discrete method. First-order convergence in time and high-order convergence in space depending on the regularity of the exact solution are shown for this method. The main challenge in the analysis is to exclude a loss of spatial regularity in the numerical solution. This is done by transforming the numerical method to new variables and by imposing a natural CFL-type restriction on the discretization parameters.\\[1.5ex]
\textbf{Mathematics Subject Classification (2010):} 
65M15, 
65P10, 
65M20.\\[1.5ex] 
\textbf{Keywords:} Zakharov system, splitting method, Lie--Trotter splitting, Fourier collocation, error bounds, loss of derivatives.
\end{abstract}

\section{Introduction}

We consider the \emph{Zakharov system}
\begin{equation}\label{eq-zak-orig}\begin{split}
\iu \partial_t \psi &= -\Delta \psi + u\psi,\\
\partial_{tt} u &= \Delta u + \Delta \abs{\psi}^2
\end{split}\end{equation}
introduced by Zakharov \cite{Zakharov1972}, which describes the propagation of Langmuir waves in a plasma. It is a Schr\"odinger equation for the complex-valued function $\psi=\psi(x,t)$ that is nonlinearly coupled to a wave equation for the real-valued function $u=u(x,t)$. This system is often considered on the full space $\mathbb{R}^d$, but for a subsequent numerical discretization it is usually truncated to a finite box. We therefore consider this system with periodic boundary conditions in $d$-dimen\-sio\-nal space, with period normalized to $2\pi$, i.e., $x\in\mathbb{T}^d=\mathbb{R}^d/(2\pi\mathbb{Z}^d)$. 

As a numerical method for the Zakharov system \eqref{eq-zak-orig}, we study the method of Jin, Markowich \& Zheng \cite{Jin2004}, which is a Lie--Trotter splitting in time combined with a Fourier collocation in space. This method has proven to work well in extensive numerical tests, see \cite{Jin2004}. 
In the present paper, we give a rigorous error analysis of this method, showing first-order convergence in time and high-order convergence in space depending on the regularity of the exact solution. The error bound holds under a CFL-type step-size restriction on the discretization parameters. In view of the proven temporal and spatial error bounds, this CFL condition is a natural restriction on the discretization parameters. As will be illustrated by numerical experiments, this condition is not only sufficient but also necessary for convergence. 

\medskip

At first glance, an error analysis of splitting methods applied to \eqref{eq-zak-orig} might seem an easy exercise in view of the available error analysis for splitting methods applied to semilinear Schr\"odinger equations \cite{Lubich2008a,Thalhammer2012a} and semilinear wave equations \cite{Gauckler2015}. This, however, is not the case, essentially because there is a formal \emph{loss of spatial regularity} (loss of spatial derivatives) in the Zakharov system that a numerical method and its analysis have to handle. In fact, an inspection of the wave equation in \eqref{eq-zak-orig} suggests that $(u(\cdot,t),\partial_t u(\cdot,t))$ is in the Sobolev space $H^{s+1}(\mathbb{T}^d)\times H^s(\mathbb{T}^d)$ for some $s>\frac{d}2$ only if $\psi(\cdot,t)$ is in the Sobolev space $H^{s+2}(\mathbb{T}^d)$ of higher order. An inspection of the Schr\"odinger equation in \eqref{eq-zak-orig} suggests, however, that $\psi(\cdot,t)$ has no higher regularity than $u(\cdot,t)$, which is $H^{s+1}(\mathbb{T}^d)$. See also the introduction of \cite{Herr}. We emphasize that this loss of spatial regularity is formal in the sense that it only appears in a naive and formal analysis, while it can be shown, with more sophisticated arguments, to not affect the actual (exact) solution.

The formal loss of spatial regularity is a major difficulty in the numerical analysis of the Zakharov system. The problem is that a loss of regularity might be also present in a numerical method, or at least in its (naive) analysis. Typically, \emph{implicit} or \emph{semi-implicit} methods can be designed to avoid this potential loss of regularity, and indeed they have been introduced and analyzed for the Zakharov system in \cite{Glassey1992a,Glassey1992,Chang1994,Chang1995}. More recently, however, several \emph{explicit} splitting methods have been introduced \cite{Bao2005a,Bao2003a,Jin2004,Jin2006}, among them the method of Jin, Markowich \& Zheng \cite{Jin2004} that we consider here. These explicit methods performed well in numerous numerical test, but rigorous error bounds for them seem to be missing so far. In fact, direct estimates for such methods, as used in the mentioned papers on Schr\"odinger and wave equations, cannot exclude a loss of spatial regularity of the numerical solution in each time step, see Section~\ref{sec-exp}. Moreover, a loss of spatial regularity actually occurs when the mentioned CFL condition is violated, see also Section~\ref{sec-exp}. 

\medskip

In the present paper, we give an error analysis for such an explicit method. The main tool is a transformation of the numerical solution to new variables. In these new variables, a loss of spatial regularity can be excluded and an error analysis is in fact possible, leading finally to error bounds in the original variables. The transformation that we use is a discrete analogon of a transformation introduced by Ozawa~\& Tsutsumi in \cite{Ozawa1992} for the analysis of the Zakharov system itself. We mention that this latter transformation has been put to numerical use by Herr \& Schratz in \cite{Herr} in a conceptually different way than here. They start from the Zakharov system in the new variables of \cite{Ozawa1992} and design and analyze a new numerical method for the system in these new variables. In contrast, we translate here the transformation of \cite{Ozawa1992} to a discrete level and show how this helps to analyze an existing and well-established numerical method. 

The techniques developed in the present paper can also be used to analyze the extension of the considered Lie--Trotter splitting to a (formally) second-order Strang splitting for the Zakharov system, as considered also in \cite{Jin2004}. We expect that the techniques can in addition be used to prove error bounds for the splitting integrators of Bao, Sun \& Wei \cite{Bao2005a,Bao2003a} for the Zakharov system. Moreover, we expect them to be useful to analyze numerical methods for other equations with a formal loss of spatial derivatives, for which a transformation to new variables without loss of derivatives is available for the exact solution, for example for derivative nonlinear Schr\"odinger equations~\cite{Hayashi1993}.
From a technical point of view, the present paper provides a new technique to analyze (explicit) numerical methods for hyperbolic equations, where a formal loss of spatial regularity is problematic. Previous techniques for that purpose include energy estimates used for quasilinear wave equations \cite{Gaucklera,Hochbruck} and special regularity results for the inviscid Burgers equation used for the KdV equation \cite{Holden2011} and for partial differential equations with Burgers nonlinearity \cite{Holden2013}. 

\medskip

The paper is organized as follows. In Section \ref{sec-methods}, the considered numerical method is introduced and the error bounds are stated. In Section \ref{sec-transformation}, a discrete version of the transformation of \cite{Ozawa1992} is described for the Lie--Trotter splitting, which is then used to prove the corresponding error bounds for the spatial semi-discretization in Section~\ref{sec-proof-semi} and for the fully discrete method in Section \ref{sec-proof}. 
In the final Section \ref{sec-exp}, numerical experiments are presented that illustrate amongst others the loss of spatial regularity when the CFL condition is not met. 

\medskip

\noindent\textbf{Notation.} For $s\ge 0$, we let $H^s=H^s(\mathbb{T}^d)$ denote the standard Sobolev space equipped with the norm
\[
\norm{v}_s = \klabigg{ \sum_{j\in\mathbb{Z}^d} \max\klabig{\abs{j},1}^{2s} \abs{v_j}^2 }^{1/2} \myfor v(x) = \sum_{j\in\mathbb{Z}^d} v_j\, \e^{\iu j\cdot x},
\]
where $j\cdot x = j_1x_1+\dots+j_d x_d$ and $\abs{j}^2 = j\cdot j = j_1^2+\dots+j_d^2$.
We will make frequent use of the fact that this space forms, for $s>\frac{d}2$, a normed algebra:
\begin{equation}\label{eq-algebra}
\norm{vw}_s \le C \norm{v}_s \norm{w}_s
\end{equation}
with $C$ depending only on $d$ and $s$.
On the product $H^{s+1}\times H^s$ we use the norm
\[
\normv{(v,\dot{v})}_s = \klabig{ \norm{v}_{s+1}^2 + \norm{\dot{v}}_s^2 }^{1/2}, \qquad (v,\dot v) \in H^{s+1}\times H^s.
\]

We further denote by $\Om$ the operator on functions that multiplies the $j$th Fourier coefficient of its argument by $\abs{j}=\sqrt{j_1^2+\dots+j_d^2}$:
\begin{equation}\label{eq-Om}
(\Om v) (x) = \sum_{j\in\mathbb{Z}^d} \abs{j} v_j\, \e^{\iu j\cdot x} \myfor v(x) = \sum_{j\in\mathbb{Z}^d} v_j\, \e^{\iu j\cdot x}.
\end{equation}
In other words, we have $\Om^2=-\Delta$, and we will often favour the notation $\Om^2$ over $-\Delta$ since the numerical method eventually involves functions of $\Om$. 

For functions depending on space $x$ and time $t$, we write $u(t)=u(\cdot,t)$ in the following. 
In addition, we use the function
\begin{equation}\label{eq-phifunc}
\phi(\xi) = \frac{\e^{\xi}-1}{\xi},
\end{equation}
which is familiar from exponential integrators.
We will often use that $\e^\xi-\e^\eta= (\xi-\eta) \phi(\xi-\eta) \e^\eta$.

\section{Numerical method and error bounds}\label{sec-methods}

Throughout the paper, we consider the Zakharov system \eqref{eq-zak-orig} in first-order form 
\begin{equation}\label{eq-zak}\begin{split}
\iu \partial_t \psi &= -\Delta \psi + u\psi,\\
\partial_{t} u &= \dot{u},\\
\partial_{t} \dot{u} &= \Delta u + \Delta \abs{\psi}^2.
\end{split}\end{equation}
For the numerical discretization of this system, we consider a numerical method introduced in \cite{Jin2004}, which combines a splitting integrator in time with a Fourier collocation in space.

\subsection{Fourier collocation in space}

We first discretize \eqref{eq-zak} in space by Fourier collocation. This method is based on the ansatz space
\[
\Biggl\{ \, \sum_{j\in\mathcal{K}} v_j \, \e^{\iu j\cdot x} : v_j\in\mathbb{C} \,\Biggr\},\qquad \mathcal{K} = \bigl\{ -K,\dots,K-1\bigr\}^d,
\]
of trigonometric polynomials of degree $K$. We replace $\psi(t)$, $u(t)$ and $\dot{u}(t)$ in \eqref{eq-zak} by trigonometric polynomials $\psi^K(t)$, $u^K(t)$ and $\dot{u}^K(t)$ from this ansatz space. As such trigonometric polynomials are uniquely determined by their values in the discrete points $x_k=k\pi/K$, $k\in\mathcal{K}$, we require that these ansatz functions satisfy the Zakharov system \eqref{eq-zak} in these discrete points. 
Letting $\Ic$ denote the trigonometric interpolation of degree $K$, i.e., the operator that assigns to a function the unique trigonometric polynomial of degree $K$ that takes the same values in the discrete points $x_k$, $k\in\mathcal{K}$, the system of equations for $\psi^K(t)$, $u^K(t)$ and $\dot{u}^K(t)$ then reads
\begin{equation}\label{eq-zakK}\begin{split}
\iu \partial_t \psi^K &= -\Delta \psi^K + \Ic \klabig{u^K\psi^K},\\
\partial_{t} u^K &= \dot{u}^K,\\
\partial_{t} \dot{u}^K &= \Delta u^K + \Delta \Ic\klabig{\abs{\psi^K}^2}
\end{split}\end{equation}
with initial values 
\[
\psi^K(t_0) = \Ic\klabig{\psi(t_0)}, \qquad u^K(t_0) = \Ic\klabig{u(t_0)}, \qquad \dot{u}^K(t_0) = \Ic\klabig{\dot{u}(t_0)} .
\]
In this system, $u^K$ and $\dot{u}^K$ are not necessarily real-valued, but they take real values in the discrete points $x_k$, $k\in\mathcal{K}$. 

We recall here the following well-known and fundamental lemma on the trigonometric interpolation $\Ic$.

\begin{lemma}\label{lemma-inter}
Let $s>\frac{d}2$ and $\si\ge 0$. Then we have
\[
\norm{\Ic(v)}_{s} \le C \norm{v}_{s}, \qquad v\in H^s,
\]
and
\[
\norm{v-\Ic(v)}_{s} \le C K^{-\si} \norm{v}_{s+\si}, \qquad v\in H^{s+\si}. 
\]
The constant $C$ depends only on $d$ and $s$ (and $\si$). \hfill \qed
\end{lemma}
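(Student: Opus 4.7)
The plan is to exploit the aliasing formula for trigonometric interpolation on the grid $x_k = k\pi/K$, $k\in\mathcal{K}$. Since $s>\tfrac{d}{2}$ implies absolute summability of the Fourier series, evaluating at the grid points and re-expanding gives
\[
(\Ic v)_j = \sum_{m\in\mathbb{Z}^d} v_{j+2Km}, \qquad j\in\mathcal{K}.
\]
This reduces both bounds to estimates on such aliasing sums, with the bijection $(j,m)\mapsto j+2Km$ from $\mathcal{K}\times\mathbb{Z}^d$ onto $\mathbb{Z}^d$ as the bookkeeping device.

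For the first inequality I would apply a weighted Cauchy--Schwarz to $(\Ic v)_j$ with weights $w_m = \max(\abs{j},1)^{2s}/\max(\abs{j+2Km},1)^{2s}$, so that
\[
\max(\abs{j},1)^{2s}\, \absbig{(\Ic v)_j}^2 \le \klabigg{\sum_m w_m} \sum_m \max\klabig{\abs{j+2Km},1}^{2s} \abs{v_{j+2Km}}^2 .
\]
The key point is that $\sum_m w_m$ is bounded uniformly in $j\in\mathcal{K}$: the term $m=0$ contributes $1$, while for $m\neq 0$ one has $\abs{j+2Km}\ge cK\abs{m}$ (since $\abs{j}\le \sqrt{d}\,K$) and $\abs{j}\le CK$, giving $w_m\le C'(1+\abs{m})^{-2s}$. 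Summability over $m$ is exactly where the hypothesis $s>\frac{d}{2}$ enters. Summing over $j\in\mathcal{K}$ and using the bijection mentioned above yields $\norm{\Ic v}_s^2\le C\norm{v}_s^2$.

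For the second inequality, let $P_K$ denote the $L^2$-orthogonal projection onto trigonometric polynomials with indices in $\mathcal{K}$. Since $P_Kv$ lies in the ansatz space, $\Ic(P_Kv)=P_Kv$, and hence
\[
v-\Ic(v) = \klabig{v-P_Kv} - \Ic\klabig{v-P_Kv}.
\]
For the first summand, any $j\notin\mathcal{K}$ has $\abs{j}\ge K$, so
\[
\norm{v-P_Kv}_s^2 = \sum_{j\notin\mathcal{K}} \abs{j}^{2s}\abs{v_j}^2 \le K^{-2\si}\sum_{j\notin\mathcal{K}} \abs{j}^{2(s+\si)}\abs{v_j}^2 \le K^{-2\si}\norm{v}_{s+\si}^2.
\]
For the second summand, apply the first part of the lemma to $v-P_Kv$. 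Combining the two yields the claimed $K^{-\si}$-bound.

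The main obstacle — and the only nontrivial step — is the uniform bound $\sum_m w_m\le C$; it is here that the algebra threshold $s>\frac{d}{2}$ is essential, whereas $\si\ge 0$ plays no role beyond extracting powers of $K$ from the tail modes.
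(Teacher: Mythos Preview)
The paper does not prove this lemma; it is stated as ``well-known and fundamental'' and closed with a \qed\ immediately after the statement. Your argument is correct and is the standard way such bounds are established: the aliasing formula, the weighted Cauchy--Schwarz estimate for stability (with the summability of $\sum_{m\ne 0}\abs{m}^{-2s}$ being precisely where $s>\tfrac{d}{2}$ is used), and the decomposition $v-\Ic v = (v-P_Kv)-\Ic(v-P_Kv)$ for the approximation estimate. One small remark: your inequality $\abs{j+2Km}\ge cK\abs{m}$ for $m\ne 0$ and $j\in\mathcal{K}$ actually holds with $c=1$, since componentwise $\abs{j_\ell+2Km_\ell}\ge 2K\abs{m_\ell}-K\ge K\abs{m_\ell}$ whenever $m_\ell\ne 0$; this makes the bound dimension-independent in that step and avoids any worry about large~$d$.
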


\subsection{Lie--Trotter splitting in time}

The discretization in time of the semi-discretization in space \eqref{eq-zakK} is based on the splitting of \eqref{eq-zakK} into
\begin{align*}
\iu \partial_t \psi^K &= -\Delta \psi^K, && & \iu \partial_t \psi^K &= \Ic\klabig{u^K\psi^K},\\
\partial_{t} u^K &= 0, && \text{and} & \partial_{t} u^K &= \dot{u}^K,\\
\partial_{t} \dot{u}^K &= 0 && & \partial_{t} \dot{u}^K &= \Delta u^K + \Delta \Ic\klabig{ \abs{\psi^K}^2 }.
\end{align*}
As noted in \cite{Jin2004}, both of these splitted systems can be solved exactly: 
\begin{itemize}
\item The solution of the first system is $\psi^K(t)=\e^{\iu (t-t_0)\Delta} \psi^K(t_0) = \e^{-\iu (t-t_0)\Om^2} \psi^K(t_0)$, $u^K(t)=u^K(t_0)$ and $\dot{u}^K(t)=\dot{u}^K(t_0)$. 
\item For the second system, we first note the solution of its first equation is given by 
\[
\psi^K(t) = \Ic \klaBig{ \e^{-\iu (t-t_0) v^K(t-t_0)} \psi^K(t_0) }
\]
with
\begin{equation}\label{eq-v-int}
v^K(t)= \frac{1}{t} \int_{t_0}^{t_0+t} u^K(t')\,\drm t'. 
\end{equation}
Using that the values $u^K(x_k,t)$, $k\in\mathcal{K}$, in the wave equation are real, this shows in particular that the absolute values $\abs{\psi^K(x_k,t)}$, $k\in\mathcal{K}$, are constant in time. Hence, $\Ic(\abs{\psi^K}^2)$ is constant in time, and the wave equation in the second system can be reformulated as $\partial_{tt} (u^{K}+\Ic(\abs{\psi^{K}}^2)) = \Delta (u^{K}+\Ic(\abs{\psi^{K}}^2))$. 
With the matrix 
\begin{equation}\label{eq-R}
R(t) = \begin{pmatrix} \cos(t\Om) & t \sinc\kla{t\Om} \\ -\Om\sin\kla{t\Om} & \cos\kla{t\Om} \end{pmatrix},
\end{equation}
where $\Om$ is the operator \eqref{eq-Om},
the solution to the wave equation in the second system is thus given by 
\[
\begin{pmatrix} u^K(t) + \Ic\klabig{\abs{\psi^K(t_0)}^2}\\ \dot{u}^K(t) \end{pmatrix} = R\kla{t-t_0} \begin{pmatrix} u^K(t_0) + \Ic\klabig{\abs{\psi^K(t_0)}^2}\\ \dot{u}^K(t_0) \end{pmatrix}.
\]
\end{itemize}
The Lie--Trotter splitting applied to \eqref{eq-zakK} uses a composition of the described flows of the two splitted equations to compute approximations to \eqref{eq-zakK}. Denoting the time step-size $\tau$, the method computes trigonometric polynomials $\psi_{n+1}^K$, $u_{n+1}^K$ and $\dot{u}_{n+1}^K$ of degree $K$ that are supposed to approximate the solutions $\psi^K(t_{n+1})$, $u^K(t_{n+1})$ and $\dot{u}^K(t_{n+1})$ of \eqref{eq-zakK} at discrete times $t_{n+1}= t_0 + (n+1)\tau$:
\begin{subequations}\label{eq-split}
\begin{equation}\label{eq-split-1}\begin{split}
\psi_{n+1}^K &= \e^{-\iu\tau\Om^2} \Ic \klabig{\e^{-\iu \tau v^K_{n+1}} \psi_{n}^K},\\
\begin{pmatrix} u_{n+1}^K\\ \dot{u}_{n+1}^K \end{pmatrix} &= R\kla{\tau} \begin{pmatrix} u_{n}^K\\ \dot{u}_{n}^K \end{pmatrix}
+ \klabig{R\kla{\tau}-1}  \begin{pmatrix} \Ic \klabig{\abs{\psi_{n}^K}^2} \\ 0 \end{pmatrix} ,
\end{split}\end{equation}
where (by computing the integral in \eqref{eq-v-int})
\begin{equation}\label{eq-vn-1}
v_{n+1}^K = \sinc\kla{\tau\Om} u_n^K + \sfrac12 \tau \sinc\klabig{\sfrac12 \tau\Om}^2 \dot{u}_n^K + \klabig{\sinc\kla{\tau\Om}-1} \Ic \klabig{\abs{\psi_{n}^K}^2}.
\end{equation}
Initial values are computed by trigonometric interpolation,
\begin{equation}\label{eq-split-start}
\psi_0^K = \psi^K(t_0) = \Ic\klabig{ \psi(t_0) }, \quad u_0^K = u^K(t_0)=\Ic \klabig{ u(t_0) }, \quad \dot{u}_0^K = \dot{u}^K(t_0) = \Ic \klabig{ \dot{u}(t_0) }.
\end{equation}
\end{subequations}
We note that
\begin{equation}\label{eq-vn-2}
v_{n+1}^K = \sinc(\tau\Om) u_{n+1}^K - \sfrac12 \tau \sinc\klabig{\sfrac12 \tau\Om}^2 \dot{u}_{n+1}^K + \klabig{ \sinc(\tau\Om)-1 } \Ic \klabig{\abs{\psi_{n}^K}^2},
\end{equation}
which can be verified by inserting \eqref{eq-split-1} into \eqref{eq-vn-2} and using a lot of trigonometric identities.

\subsection{Statement of error bounds}\label{subsec-results}

We state our error bounds for the spatial semi-discretization \eqref{eq-zakK} by Fourier collocation and for the full discretization \eqref{eq-split} by Lie--Trotter splitting. For these global error bounds on finite time intervals, we assume regularity of the exact solution to the Zakharov system \eqref{eq-zak}:
\begin{equation}\label{eq-regularity}
\norm{\psi(t)}_{s+2+\si} + \normv{(u(t),\dot{u}(t))}_{s+\si} \le M \myfor 0\le t-t_0\le T 
\end{equation}
for some $s>\frac{d}2$ and $\si>0$. This regularity assumption can be expected to hold locally in time by the well-posedness theory of the Zakharov system \eqref{eq-zak} on the torus, see \cite{Kishimoto2013} and references therein.

For the spatial semi-discretization \eqref{eq-zakK}, we then have the following error bound, whose proof is given in Sections \ref{sec-transformation} and \ref{sec-proof-semi} below.

\begin{theorem}\label{thm-semi}
Let $s>\frac{d}2$ and $\si> 0$, and assume that the exact solution to \eqref{eq-zak} satisfies~\eqref{eq-regularity} 
with these $s$ and $\si$ and with $M\ge 1$ and $T>0$. Then, the error of the Fourier collocation \eqref{eq-zakK} with spatial discretization parameter $K\ge K_0$ is bounded by
\[
\normbig{\psi(t) - \psi^K(t)}_{s+2} + \normvbig{\klabig{u(t),\dot{u}(t)} - \klabig{u^K(t),\dot{u}^K(t)}}_{s} \le C K^{-\si}
\]
for $0\le t-t_0 \le T$.
The constants $C$ and $K_0$ depend only on $s$, $\si$, $M$ and $T$ of \eqref{eq-regularity} and on the dimension $d$.
\end{theorem}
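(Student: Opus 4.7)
The key difficulty is the formal loss of spatial regularity already highlighted in the introduction: a naive energy estimate on the Schr\"odinger equation in \eqref{eq-zakK} at the level of $H^{s+2}$ would, via the algebra property \eqref{eq-algebra}, require $u^K\in H^{s+2}$, whereas the wave equation in \eqref{eq-zakK} only provides $u^K\in H^{s+1}$ when $\psi^K\in H^{s+2}$. My plan, following Section~\ref{sec-transformation}, is therefore to pass to new variables in which this formal loss is absent, namely a discrete analogue of the Ozawa--Tsutsumi change of variables of \cite{Ozawa1992}. Schematically, I would replace $(\psi^K,u^K,\dot u^K)$ by $(\psi^K,w_+^K,w_-^K)$ with $w_\pm^K = \dot u^K \pm \iu\Om u^K + (\text{quadratic correction in }\psi^K)$, the correction being chosen so that the equations for $w_\pm^K$ read $\partial_t w_\pm^K = \pm\iu\Om w_\pm^K + F_\pm^K(\psi^K,\bar\psi^K)$ with $F_\pm^K$ bounded in $H^s$ whenever $\psi^K\in H^{s+2}$. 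The analogous transformation is available for the exact solution, and it admits a locally Lipschitz inverse in the natural norm, so that bounding the error in the new variables is equivalent to bounding the error in $(\psi,u,\dot u)$.

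Once in these variables, I would set up a Gronwall argument for
\[
E(t) = \normbig{\psi^K(t) - \Ic\psi(t)}_{s+2} + \normbig{w_+^K(t) - \Ic w_+(t)}_s + \normbig{w_-^K(t) - \Ic w_-(t)}_s.
\]
Inserting the projected exact solution into the semi-discrete transformed system produces a right-hand side of two types. Nonlinear differences such as $F_\pm^K(\psi^K,\bar\psi^K) - F_\pm^K(\Ic\psi,\overline{\Ic\psi})$ are controlled by $C\,E(t)$ via the algebra property \eqref{eq-algebra} and the boundedness of $\Ic$ in $H^s$ from Lemma~\ref{lemma-inter}. Defect terms, arising from the non-commutativity of $\Ic$ with multiplication (for instance differences of the form $\Ic(u\psi) - \Ic(\Ic(u)\Ic(\psi))$) and from $(1-\Ic)$ applied to the exact solution, are bounded by $CK^{-\si}$ using Lemma~\ref{lemma-inter} together with the $\si$ extra derivatives of regularity available through \eqref{eq-regularity}.

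A standard Gronwall inequality then gives $E(t) \le CK^{-\si}$ on $[t_0,t_0+T]$, provided the semi-discrete solution stays in a fixed ball of the natural space. This bootstrap assumption is closed by the triangle inequality applied to $E(t)$, together with Lemma~\ref{lemma-inter} and \eqref{eq-regularity}, once $K_0$ is chosen large enough so that $CK_0^{-\si}\le M$. Translating back through the locally Lipschitz inverse of the change of variables converts the bound on $E(t)$ into the desired
\[
\normbig{\psi(t)-\psi^K(t)}_{s+2} + \normvbig{\klabig{u(t),\dot u(t)}-\klabig{u^K(t),\dot u^K(t)}}_s \le CK^{-\si}.
\]

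The main obstacle, by far, is the construction in the first step. At the continuous level, the cancellation of the derivative-losing term $\Om^2|\psi|^2$ in the equation for $w_\pm$ relies on identities such as $\partial_t|\psi|^2 = -2\,\ImT(\bar\psi\,\Delta\psi)$ obtained from the Schr\"odinger equation, together with the bilinear current structure of $\ImT(\bar\psi\nabla\psi)$. At the semi-discrete level, every such manipulation generates defects from the fact that $\Ic$ commutes neither with products nor with derivatives, and the careful verification that all these defects are bounded by $CK^{-\si}$ in the natural norm --- while producing no new derivative loss --- is the delicate bookkeeping at the heart of the proof. Once this is in place, the subsequent Gronwall estimate and bootstrap are standard.
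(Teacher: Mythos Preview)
Your overall architecture (transform to new variables, Gronwall plus bootstrap, then translate back) matches the paper, but the transformation you propose is aimed at the wrong equation, and the argument does not close as written.

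The derivative loss sits in the Schr\"odinger equation, not in the wave equation. With $(u^K,\dot u^K)\in H^{s+1}\times H^s$ and $\psi^K\in H^{s+2}$, the wave part of \eqref{eq-zakK} is already harmless: the forcing $\Delta\Ic(|\psi^K|^2)$ lies in $H^s$ by the algebra property, and the linear wave propagator $R(t)$ of \eqref{eq-R} preserves $\normv{\cdot}_s$ up to a factor $(1+|t|)$ (Lemma~\ref{lemma-bound-R}). So a normal-form correction of the type $w_\pm=\dot u\pm\iu\Om u+Q(\psi,\bar\psi)$ is not needed there. What \emph{fails} is the Schr\"odinger estimate you leave untouched: in your error functional you keep $\norm{\psi^K-\Ic\psi}_{s+2}$, and differentiating this along the Schr\"odinger equation forces you to bound $\norm{\Ic(u^K\psi^K)-\Ic(u\psi)}_{s+2}$. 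Any splitting of this difference produces a term like $(u^K-\Ic u)\psi^K$ or $\Ic(u)(\psi^K-\Ic\psi)$, and the algebra property \eqref{eq-algebra} in $H^{s+2}$ then needs $u^K$ (or $\Ic u$) in $H^{s+2}$. But from $w_\pm^K\in H^s$ you only recover $u^K=(2\iu\Om)^{-1}(w_+^K-w_-^K)+\cdots\in H^{s+1}$, which is exactly one derivative short. Your $E(t)$-inequality therefore does not close.

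The paper's transformation in Section~\ref{sec-transformation} fixes precisely this point: it replaces $\psi$ by $\ph=\partial_t\psi$ and runs the Gronwall argument for $\ph$ in $H^s$, where the nonlinearity $u\ph+\dot u\psi$ is controllable because $u\in H^{s+1}\hookrightarrow H^s$ and $\dot u\in H^s$. The variable $\psi$ is then \emph{not} estimated dynamically at the $H^{s+2}$ level; instead it is recovered elliptically via $\psi_P=(-\Delta+1)^{-1}(\iu\ph+\psi_I-u\psi_I)$, which gains two derivatives and yields $\psi\in H^{s+2}$ from $\ph,u,\psi_I\in H^s$ (Lemma~\ref{lemma-error-psiP}). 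This elliptic gain is the missing ingredient in your outline; once it is in place, the defect estimates and the bootstrap proceed exactly as you describe.
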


Our main result is the following global error bound for the Lie--Trotter splitting~\eqref{eq-split}. Its proof is given in Sections \ref{sec-transformation} and \ref{sec-proof} below. The result holds under the CFL-type step-size restriction
\begin{equation}\label{eq-cfl}
d \tau K^2 \le c < 2 \pi
\end{equation}
on the time step-size $\tau$, the spatial discretization parameter $K$ and the dimension $d$.

\begin{theorem}\label{thm-main}
Let $s>\frac{d}2$ and $\si\ge 2$, and assume that the exact solution to \eqref{eq-zak} satisfies~\eqref{eq-regularity} 
with these $s$ and $\si$ and with $M\ge 1$ and $T>0$. Then, the global error of the Lie--Trotter splitting \eqref{eq-split} with time step-size $\tau\le\tau_0$ and spatial discretization parameter $K\ge K_0$ that satisfy the CFL-type step-size restriction \eqref{eq-cfl} is bounded by
\[
\normbig{\psi(t_n) - \psi_n^K}_{s+2} + \normvbig{\klabig{u(t_n),\dot{u}(t_n)} - (u_n^K,\dot{u}_n^K)}_s \le C \klabig{\tau + K^{-\si}}
\]
for $0\le t_n-t_0=n\tau \le T$.
The constants $C$, $K_0$ and $\tau_0$ depend on $s$, $\si$, $M$ and $T$ of \eqref{eq-regularity}, on the dimension $d$ and on $c$ of \eqref{eq-cfl}.
\end{theorem}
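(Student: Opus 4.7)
The plan is to split the global error by the triangle inequality into the space-semi-discretization error and the time-discretization error. The space part, namely the $H^{s+2}\times H^{s+1}\times H^s$ distance between $(\psi(t_n),u(t_n),\dot u(t_n))$ and $(\psi^K(t_n),u^K(t_n),\dot u^K(t_n))$, is already controlled by $CK^{-\si}$ thanks to Theorem~\ref{thm-semi}. All the work therefore concentrates on the time-discretization error, i.e., on comparing the Lie--Trotter splitting iterates to the exact-in-time solution of the Fourier collocation system~\eqref{eq-zakK}. For this I would follow a Lady Windermere's fan strategy: combine a local $O(\tau^2)$ consistency estimate with stability of the splitting map along the semi-discrete reference solution, then sum the $n\le T/\tau$ local errors into the desired $O(\tau)$ global bound.

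The naive version of this strategy fails because of the loss of spatial derivatives already flagged in the introduction: expanding the update $\psi_n^K\mapsto \e^{-\iu\tau\Om^2}\Ic(\e^{-\iu\tau v_{n+1}^K}\psi_n^K)$ to second order in $\tau$ and subtracting the analogous expansion for the semi-discrete flow leaves a residual containing $\Om^2(u^K\psi^K)$, which cannot be estimated in $H^{s+2}$ in terms of $\norm{u^K}_{s+1}$ and $\norm{\psi^K}_{s+2}$. To bypass this I would introduce a discrete analog of the Ozawa--Tsutsumi transformation $(\psi^K_n,u^K_n,\dot u^K_n)\mapsto(\tilde\psi^K_n,\tilde u^K_n,\tilde{\dot u}^K_n)$, in which $\tilde\psi^K_n$ is obtained from $\psi^K_n$ by multiplication with a phase factor of the form $\e^{-\iu\Om^{-1}(\cdot)}$ built from the wave component, and the wave pair is corrected by absorbing an $\Ic(\abs{\psi^K_n}^2)$-contribution in the same spirit as the rearrangement already used in \eqref{eq-split-1}. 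The algebra property~\eqref{eq-algebra} together with the gain $\Om^{-1}\colon H^s\to H^{s+1}$ then shifts the offending $\Om^2$ factor onto an inner product, and in the new variables only $s+1$ derivatives of $u^K$ and $s+2$ derivatives of $\psi^K$ are ever required.

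With this transformation in place the proof proceeds in four steps. (i) Derive the exact recurrence satisfied by $(\tilde\psi^K_n,\tilde u^K_n,\tilde{\dot u}^K_n)$ and the transformed form of the semi-discrete equation. (ii) Prove the one-step local error of order $\tau^2$ in $H^{s+2}\times H^{s+1}\times H^s$, using the CFL condition~\eqref{eq-cfl} to bound $\tau\Om^2$ on trigonometric polynomials of degree $K$ by $c<2\pi$ and hence make the Taylor remainders of $\e^{-\iu\tau\Om^2}$, of $R(\tau)$, and of the $\phi$-function identities from~\eqref{eq-phifunc} quantitatively usable. (iii) Prove uniform-in-$n$ stability of the transformed discrete map in a fixed tube around the reference solution, using the boundedness of $\Ic$ from Lemma~\ref{lemma-inter} and the algebra property. (iv) Sum the local errors Lady-Windermere-style, invert the transformation to recover a bound on the original iterates, and combine with Theorem~\ref{thm-semi}. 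The main obstacle is step~(ii) together with the bookkeeping of the transformation itself: one must identify exactly which combination of $\sinc(\tau\Om)$ factors and $\Om^{-1}$-correctors makes the $\Om^2(u^K\psi^K)$ residual cancel after one Lie--Trotter step, and verify that neither the backward transformation nor the stability estimate reintroduces the lost derivative through the interpolation operator $\Ic$.
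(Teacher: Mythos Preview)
Your overall architecture is right and matches the paper: split the total error via Theorem~\ref{thm-semi} into a spatial part of size $CK^{-\si}$ and a temporal part, then run Lady Windermere's fan on the temporal part in transformed variables to avoid the derivative loss. The paper does exactly this in Section~\ref{sec-proof}.

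However, the transformation you propose is not the Ozawa--Tsutsumi transformation and would not close the estimates. You describe a gauge-type change $\tilde\psi_n^K = \e^{-\iu\Om^{-1}(\cdots)}\psi_n^K$; the transformation actually used (Section~\ref{sec-transformation}) instead introduces the \emph{discrete time derivative} $\ph_{n+1}^K = (\psi_{n+1}^K - \psi_n^K)/\tau$ as the new unknown. One then recovers $\psi_n^K$ from $\ph_n^K$ through a discrete Poisson-type equation,
\[
\psi_n^K = \klabig{\Om^2\phi(\iu\tau\Om^2)+1}^{-1} \Ic\klabig{\iu\ph_n^K + \psi_{I,n}^K - v_n^K\phi(\iu\tau v_n^K)\e^{\iu\tau\Om^2}\psi_{I,n}^K},
\]
and Lemma~\ref{lemma-poisson} shows that this inverse operator gains \emph{two} full derivatives under the CFL condition. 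This is what feeds the wave equation, which needs $\psi_n^K$ in $H^{s+2}$. A phase multiplication by $\e^{-\iu\Om^{-1}(\cdot)}$ gains at most one derivative on the phase itself, leaves $\abs{\psi_n^K}^2$ unchanged, and therefore cannot supply the two derivatives that the term $\Delta\abs{\psi}^2$ demands; the bad residual you identify would not cancel.

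You have also misplaced the role of the CFL condition~\eqref{eq-cfl}. It is not primarily there to tame Taylor remainders of $\e^{-\iu\tau\Om^2}$ or $R(\tau)$; its essential function is to keep $\e^{\iu\tau\abs{j}^2}-1+\iu\tau$ bounded away from zero for every $j\in\mathcal{K}$, which is precisely what makes $\klabig{\Om^2\phi(\iu\tau\Om^2)+1}^{-1}$ bounded from $H^s$ to $H^{s+2}$ (see the proof of Lemma~\ref{lemma-poisson}). Without this gain the stability and local-error estimates for the wave component (Propositions~\ref{prop-stab-u} and~\ref{prop-local-u}) could not be carried out in $H^{s+1}\times H^s$. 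So the heart of the argument is not the bookkeeping of $\sinc(\tau\Om)$ factors but the discrete elliptic gain under CFL combined with the time-differentiated variable $\ph_n^K$.
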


In view of the error bound of Theorem \ref{thm-main}, it is natural to choose $\tau=\mathcal{O}(K^{-\si})=\mathcal{O}(K^{-2})$. The CFL condition \eqref{eq-cfl} of Theorem \ref{thm-main} is thus not severe but natural.

\section{New variables for the numerical method}\label{sec-transformation}

In this section, we present the transformation from \cite{Ozawa1992} of the Zakharov system \eqref{eq-zak} to new variables, in which a loss of spatial derivatives can be excluded. We then show, how the semi-discretization in space \eqref{eq-zakK} and the fully discrete splitting method~\eqref{eq-split} can be transformed in a similar way to new variables. As it turns out, a loss of spatial derivatives can be excluded in these new variables as well. The error bounds of Theorems \ref{thm-semi} and \ref{thm-main} are therefore proven in Sections \ref{sec-proof-semi} and \ref{sec-proof} below in these new variables.

\subsection{Transformation of the exact solution}

We describe the transformation of the Zakharov system \eqref{eq-zak} introduced in \cite{Ozawa1992}. It is based on the new variable
\begin{equation}\label{eq-phi}
\ph = \partial_t \psi,
\end{equation}
for which we get the equation
\begin{equation}\label{eq-phi-eq}
\iu \partial_t \ph = -\Delta \ph + u \ph + \dot{u} \psi.
\end{equation}
The original variable $\psi$ can be recovered from $\ph$ in two equivalent ways. On the one hand, we have
\begin{equation}\label{eq-EfromF-cont}
\psi(t) = \psi(t_0) + \int_{t_0}^t \ph(t')\,\drm t'.
\end{equation}
On the other hand, we get the Poisson equation $-\Delta \psi = \iu \ph - u\psi$ from the differential equation for $\psi$ in \eqref{eq-zak}, which yields 
\begin{equation}\label{eq-EfromF2-cont}
\psi = \klabig{-\Delta+1}^{-1} \klabig{\iu \ph + \psi - u \psi  },
\end{equation}
where $\psi$ on the right-hand side can be computed from \eqref{eq-EfromF-cont}.
For $\psi$ computed from the integral \eqref{eq-EfromF-cont}, we write $\psi_{I}$ in the following, and for $\psi$ computed from the Poisson equation \eqref{eq-EfromF2-cont} with $\psi_{I}$ on the right-hand side, we write $\psi_{P}$.

We end up with the system
\begin{subequations}\label{eq-zak-new}
\begin{equation}\label{eq-zak-new-1}\begin{split}
\iu \partial_t \ph &= -\Delta \ph + u \ph + \dot{u} \psi_I ,\\
\partial_{t} u &= \dot{u},\\
\partial_t \dot{u} &= \Delta u + \Delta \abs{\psi_P}^2,
\end{split}\end{equation}
where the notations
\begin{equation}\label{eq-zak-new-2}\begin{split}
\psi_I(t) &= \psi(t_0) + \int_{t_0}^t \ph(t')\,\drm t',\\
\psi_P &= \klabig{-\Delta+1}^{-1} \klabig{\iu \ph + \psi_I - u \psi_I }
\end{split}\end{equation}
\end{subequations}
are used.
In the new variables, we see that we gain two spatial derivatives of $\psi=\psi_P$ (in comparison to $\ph=\partial_t\psi$) as needed in the wave equation of \eqref{eq-zak-orig}, see the discussion in the introduction. 

For later use, we note that, under the regularity assumption \eqref{eq-regularity} of Theorem \ref{thm-semi}, we have for the new variable $\ph = \partial_t \psi = \iu \Delta \psi - \iu u \psi$ the bound 
\begin{equation}\label{eq-regularity-phi}
\norm{\ph(t)}_{s+\si} \le C M
\end{equation}
by the algebra property \eqref{eq-algebra}.

\subsection{Transformation of the semi-discretization in space}

It is straightforward to extend the described transformation of \cite{Ozawa1992} to the semi-dis\-cre\-ti\-zation in space \eqref{eq-zakK}. With the new variable
\begin{equation}\label{eq-phiK}
\ph^K = \partial_t \psi^K
\end{equation}
we get the system
\begin{subequations}\label{eq-zakK-new}
\begin{equation}\label{eq-zakK-new-1}\begin{split}
\iu \partial_t \ph^K &= -\Delta \ph^K + \Ic \klabig{u^K \ph^K + \dot{u}^K \psi_I^K} ,\\
\partial_{t} u^K &= \dot{u}^K,\\
\partial_t \dot{u}^K &= \Delta u^K + \Delta \Ic \klabig{ \abs{\psi_P^K}^2},
\end{split}\end{equation}
where the notations
\begin{equation}\label{eq-zakK-new-2}\begin{split}
\psi_I^K(t) &= \psi^K(t_0) + \int_{t_0}^t \ph^K(t')\,\drm t',\\
\psi_P^K &= \klabig{-\Delta+1}^{-1} \Ic \klabig{\iu \ph^K + \psi_I^K - u^K \psi_I^K }
\end{split}\end{equation}
\end{subequations}
are used. Note that $\psi_I^K=\psi_P^K=\psi^K$. The initial values for this system are
\[
\ph^K(t_0) = \Ic \klabig{\ph(t_0)}, \qquad u^K(t_0) = \Ic \klabig{u(t_0)}, \qquad \dot{u}^K(t_0) = \Ic \klabig{\dot{u}(t_0)}.
\]

\subsection{Transformation of the full discretization}

In this section, we perform the transformation of \cite{Ozawa1992} on a fully discrete level for the numerical method \eqref{eq-split}. This shows how the Lie--Trotter splitting \eqref{eq-split} can be interpreted as a discretization of the Zakharov system in the new variables \eqref{eq-zak-new}. 

The discrete new variable that we introduce is 
\[
\ph_{n+1}^K = \frac{\psi_{n+1}^K-\psi_n^K}{\tau}, \qquad n=0,1,2,\ldots .
\]
This is a \emph{time-discrete} version of \eqref{eq-phi} and \eqref{eq-phiK}. 
For this new variable, we get from \eqref{eq-split-1} the recursion
\[
\ph_{n+1}^K = \e^{-\iu\tau\Om^2} \Ic \klabig{ \e^{-\iu \tau v_{n}^K} \ph_n^K } - \iu \e^{-\iu\tau\Om^2} \Ic \klaBigg{\klabigg{\frac{\e^{-\iu\tau v_{n+1}^K}-\e^{-\iu\tau v_{n}^K}}{-\iu\tau}} \psi_n^K},
\]
which is a discrete analogon of \eqref{eq-phi-eq}.
Writing $w_n^K = (v_{n+1}^K - v_{n}^K)/\tau$, 
we get
\[
\ph_{n+1}^K = \e^{-\iu\tau\Om^2} \Ic \klaBig{\e^{-\iu \tau v_{n}^K} \klaBig{\ph_n^K  -\iu\tau w_n^K \phi\klabig{-\iu\tau^2 w_n^K} \psi_n^K} }
\]
with the function $\phi$ of \eqref{eq-phifunc}.
We note that
\[
w_n^K = \sinc\klabig{\sfrac12 \tau\Om}^2 \dot{u}_n^K + \klabig{\sinc\kla{\tau\Om}-1} \Ic \klaBig{ \ReT \klabig{(\psi_n^K+\psi_{n-1}^K) \overline{\ph}_n^K} }
\]
by \eqref{eq-vn-1} and \eqref{eq-vn-2}.

The original variable $\psi$ can be recovered from the new variable $\ph$ in two equivalent ways. On the one hand, we have in analogy to \eqref{eq-EfromF-cont}
\[
\psi_{n}^K = \psi_{n-1}^K + \tau \ph_{n}^K = \dots = \psi_0^K + \tau \klabig{ \ph_1^K + \dots + \ph_{n}^K }.
\]
On the other hand, using $\psi_{n-1}^K = \Ic(\e^{\iu\tau v_{n}^K} \e^{\iu\tau\Om^2}\psi_{n}^K)$ in $\tau\ph_n^K=\psi_n^K-\psi_{n-1}^K$, we get
\[
\klabigg{\frac{\e^{\iu\tau\Om^2}-1}{\iu\tau}} \psi_{n}^K = \iu\ph_{n}^K - \Ic \klaBigg{ \klabigg{\frac{\e^{\iu\tau v_{n}^K}-1}{\iu\tau}} \e^{\iu\tau\Om^2} \psi_{n}^K }.
\]
Under the CFL condition \eqref{eq-cfl}, the matrix $\e^{\iu\tau\Om^2}-1$ is invertible. This yields a discrete analogon of \eqref{eq-EfromF2-cont}:
\[
\psi_{n}^K = \klabig{\Om^2\phi\klabig{\iu\tau\Om^2}+1}^{-1} \Ic \klaBig{\iu\ph_{n}^K + \psi_n^K - v_n^K\phi\klabig{\iu\tau v_{n}^K} \e^{\iu\tau\Om^2} \psi_{n}^K }
\]
with $\phi$ from \eqref{eq-phifunc}.

We end up with (for $n\ge 1$)
\begin{subequations}\label{eq-split-new}
\begin{equation}\label{eq-split-new-1}\begin{split}
\ph_{n+1}^K &= \e^{-\iu\tau\Om^2} \Ic \klaBig{\e^{-\iu \tau v_{n}^K} \klabig{\ph_n^K  -\iu\tau w_n^K \phi\klabig{-\iu\tau^2 w_n^K} \psi_{I,n}^K} }, \\
\begin{pmatrix} u_{n+1}^K\\ \dot{u}_{n+1}^K \end{pmatrix} &= R(\tau) \begin{pmatrix} u_{n}^K\\ \dot{u}_{n}^K \end{pmatrix}
+ \klabig{R(\tau)-1}  \begin{pmatrix} \Ic \kla{\abs{\psi_{P,n}^K}^2} \\ 0 \end{pmatrix},
\end{split}\end{equation}
where the notations
\begin{equation}\label{eq-split-new-vw}\begin{split}
v_{n}^K &= \sinc\kla{\tau\Om} u_{n}^K - \sfrac12 \tau \sinc\klabig{\sfrac12 \tau\Om}^2 \dot{u}_{n}^K + \klabig{\sinc\kla{\tau\Om}-1} \Ic \klabig{\abs{\psi_{I,n-1}^K}^2},\\
w_{n}^K &= \sinc\klabig{\sfrac12 \tau\Om}^2 \dot{u}_n^K + \klabig{\sinc\kla{\tau\Om}-1} \Ic \klaBig{ \ReT \klabig{(\psi_{I,n}^K+\psi_{I,n-1}^K) \overline{\ph}_n^K} }
\end{split}\end{equation}
and
\begin{equation}\label{eq-split-new-psi}\begin{split}
\psi_{I,n}^K &= \psi_0^K + \tau \klabig{\ph_1^K+\dots+\ph_n^K},\\
\psi_{P,n}^K &= \klabig{\Om^2\phi\klabig{\iu\tau\Om^2}+1}^{-1} \Ic \klaBig{\iu\ph_{n}^K + \psi_{I,n}^K - v_n^K\phi\klabig{\iu\tau v_{n}^K} \e^{\iu\tau\Om^2} \psi_{I,n}^K }
\end{split}\end{equation}
are used. Note again that $\psi_{I,n}^K=\psi_{P,n}^K=\psi_n^K$. 
The starting values for this two-term recursion are
\begin{equation}\label{eq-split-new-start}\begin{split}
&u_0^K = \Ic\klabig{u(t_0)}, \qquad \dot{u}_0^K= \Ic\klabig{\dot{u}(t_0)}, \qquad \psi_{P,0}^K=\psi_0^K=\Ic\klabig{\psi(t_0)},\\
&\ph_1^K = \frac{\psi_1^K-\psi_0^K}{\tau} \qquad\text{with}\qquad \psi_1^K = \e^{-\iu\tau\Om^2} \Ic\klabig{\e^{-\iu \tau v_{1}^K} \psi_0^K}.
\end{split}\end{equation}
\end{subequations}
The formulas for the numerical method in the new variables are much longer than in the original variables and certainly not suited for an implementation of the method. Nevertheless, they are very useful for the analysis: it turns out that they allow us to gain two spatial derivatives for the numerical solution $\psi_n^K=\psi_{P,n}^K$ (in comparison to $\ph_n^K$) thanks to the factor $\kla{\Om^2\phi\klabig{\iu\tau\Om^2}+1}^{-1}$. This property is crucial for our analysis. It is proven in the following lemma under the CFL condition \eqref{eq-cfl}. Under this CFL condition, the operator $\Om^2=-\Delta$ acts only on trigonometric polynomials of fixed degree and is thus bounded. 

\begin{lemma}\label{lemma-poisson}
Let $s\ge 0$ and assume that the CFL condition \eqref{eq-cfl} holds. We then have, for trigonometric polynomials $v(x)=\sum_{j\in\mathcal{K}} v_j \, \e^{\iu j\cdot x}$ of degree $K$,
\begin{align*}
\normbig{\klabig{\Om^2\phi\klabig{\iu\tau\Om^2}+1}^{-1} v}_{s+2} &\le C \norm{v}_s,\\
\normbig{\klabig{\Om^2+1}^{-1} v - \klabig{\Om^2\phi\klabig{\iu\tau\Om^2}+1}^{-1} v}_s &\le C \tau \norm{v}_s.
\end{align*}
\end{lemma}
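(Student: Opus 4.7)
The plan is to diagonalize everything in Fourier space. Since $\Om^2$ acts on $\e^{\iu j\cdot x}$ by multiplication by $\abs{j}^2$, both operators in the statement are Fourier multipliers. Using the identity $\xi\phi(\xi) = \e^\xi-1$ for the function $\phi$ of \eqref{eq-phifunc}, the symbol of $\Om^2\phi\klabig{\iu\tau\Om^2}+1$ on the Fourier mode $\e^{\iu j\cdot x}$ is
\[
S_j \;=\; \abs{j}^2\phi\klabig{\iu\tau\abs{j}^2}+1 \;=\; \frac{\e^{\iu\tau\abs{j}^2}-1+\iu\tau}{\iu\tau},
\]
and by the CFL condition \eqref{eq-cfl} the phase $\eta_j:=\tau\abs{j}^2$ satisfies $\eta_j\le d\tau K^2\le c<2\pi$ for every $j\in\mathcal{K}$.

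The main step, and the main obstacle, is the uniform lower bound
\[
\abs{S_j} \;\ge\; c'\klabig{\abs{j}^2+1}, \qquad j\in\mathcal{K},
\]
which, after setting $\eta=\tau\abs{j}^2$, amounts to $\absbig{\e^{\iu\eta}-1+\iu\tau}\ge c''(\eta+\tau)$ for $(\eta,\tau)\in[0,c]\times[0,\tau_0]$. The quotient $(\eta,\tau)\mapsto\absbig{\e^{\iu\eta}-1+\iu\tau}^2/(\eta+\tau)^2$ is continuous off the origin; the expansion $\e^{\iu\eta}-1+\iu\tau = \iu(\eta+\tau)+O(\eta^2)$ extends it continuously by $1$ at $(0,0)$; and on $[0,2\pi)\times[0,\tau_0]$ the numerator $\e^{\iu\eta}-1+\iu\tau$ vanishes only at $(0,0)$. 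Compactness, together with the strict inequality $c<2\pi$ (which is precisely why the CFL condition is imposed), then gives the bound.

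Granted the lower bound, the first estimate is a direct Fourier-side computation,
\[
\normbig{\klabig{\Om^2\phi\klabig{\iu\tau\Om^2}+1}^{-1}v}_{s+2}^2 \;=\; \sum_{j\in\mathcal{K}} \max\klabig{\abs{j},1}^{2(s+2)} \abs{S_j}^{-2} \abs{v_j}^2 \;\le\; C\norm{v}_s^2,
\]
using $\max(\abs{j},1)^4\le C(\abs{j}^2+1)^2$. For the second estimate, the symbol of the difference is
\[
\frac{1}{\abs{j}^2+1} - \frac{1}{S_j} \;=\; \frac{\abs{j}^2\klabig{\phi\klabig{\iu\tau\abs{j}^2}-1}}{(\abs{j}^2+1)\,S_j},
\]
and the elementary bound $\absbig{\phi(\iu\eta)-1}\le C\eta$ for $\eta\in[0,c]$ (from the power series $\phi(\xi)-1=\sum_{k\ge 1}\xi^k/(k+1)!$) together with the lower bound on $\abs{S_j}$ shows this symbol is $O(\tau)$ uniformly in $j\in\mathcal{K}$, which translates into the claimed $H^s\to H^s$ estimate.
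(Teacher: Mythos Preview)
Your proof is correct. The overall structure---reducing to a Fourier multiplier estimate and establishing the lower bound $\abs{S_j}\ge c'(\abs{j}^2+1)$---matches the paper exactly, and your treatment of the second estimate is essentially identical to the paper's (both compute the same difference symbol and bound its numerator by $\abs{e^{\iu\eta}-1-\iu\eta}\le\eta^2$, which is your $\abs{\phi(\iu\eta)-1}\le C\eta$ multiplied through).

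The one genuine difference is how the lower bound $\abs{e^{\iu\eta}-1+\iu\tau}\ge c''(\eta+\tau)$ is obtained. The paper writes out the real and imaginary parts explicitly,
\[
e^{\iu\eta}-1+\iu\tau = -2\sin^2\!\bigl(\tfrac12\eta\bigr) + \iu\bigl(\sin\eta+\tau\bigr),
\]
and splits into the ranges $0\le\eta\le\tfrac{\pi}{2}$ (where the imaginary part dominates, via $\sin\eta\ge\tfrac{2}{\pi}\eta$) and $\tfrac{\pi}{2}\le\eta\le c$ (where the real part dominates, via $\sin(\tfrac12\eta)\ge\min(\sin\tfrac{c}{2},\sin\tfrac{\pi}{4})$). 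This yields explicit constants depending only on $c$ and works for all $\tau>0$ without any a priori bound. Your compactness argument is cleaner and more conceptual, but it needs the rectangle $[0,c]\times[0,\tau_0]$ to be compact; you should note that the CFL condition together with $K\ge 1$ forces $\tau\le c/d$, so such a $\tau_0$ is available. With that remark your argument is complete; the trade-off is softness of the constant versus brevity of the proof.
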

\begin{proof}
(a) We prove the first estimate. The $j$th Fourier coefficient of the function whose norm has to be estimated is given by
\begin{equation}\label{eq-aux-fouriercoeff1}
\frac{\iu\tau }{ \e^{\iu\tau \abs{j}^2}-1 + \iu\tau} \,\, v_j , \qquad j\in\mathcal{K} .
\end{equation}
For the expression in the denominator, we note that 
\[
e^{\iu\tau \abs{j}^2} - 1 + \iu\tau = -2\sin\klabig{\sfrac12 \tau \abs{j}^2}^2 + \iu \klabig{\sin(\tau \abs{j}^2) + \tau}. 
\]
Using $\sin(\xi)\ge \frac2\pi \xi$ for $0\le \xi\le \frac12 \pi$, we thus have for this denominator 
\begin{subequations}\label{eq-aux-denom}
\begin{equation}
\absbig{e^{\iu\tau \abs{j}^2} - 1 + \iu\tau} \ge \sin(\tau \abs{j}^2) + \tau \ge \sfrac2\pi \tau \max\klabig{\abs{j},1}^2 , \qquad 0\le \tau \abs{j}^2 \le \sfrac12 \pi .
\end{equation}
Using $\sin(\frac12 \xi)\ge \xi \sin(\frac12 c)/c$ and $\sin(\frac12\xi)\ge \min(\sin(\frac12 c),\sin(\frac14\pi))$ for $\frac12 \pi\le \xi\le c$ with $c<2\pi$ from the CFL condition \eqref{eq-cfl}, we get
\begin{equation}
\absbig{e^{\iu\tau \abs{j}^2} - 1 + \iu\tau} \ge 2 \sin\klabig{\sfrac12 \tau \abs{j}^2}^2 \ge C \tau \max\klabig{\abs{j},1}^2, \qquad \sfrac12 \pi \le \tau \abs{j}^2 \le c
\end{equation}
\end{subequations}
with $C$ depending on $c<2\pi$.
Using the estimates \eqref{eq-aux-denom} for the denominator in \eqref{eq-aux-fouriercoeff1}, we see that the absolute value of the above $j$th Fourier coefficient \eqref{eq-aux-fouriercoeff1} is bounded by $C \max(\abs{j},1)^{-2} \abs{v_j}$, if the CFL condition \eqref{eq-cfl} holds. This yields the first estimate of the lemma.

(b) For the second estimate of the lemma, we proceed similarly as in (a). The $j$th Fourier coefficient of the function whose norm has to be estimated is now given by
\begin{equation}\label{eq-aux-fouriercoeff2}
\frac{\e^{\iu\tau \abs{j}^2} - 1 - \iu\tau \abs{j}^2}{(\abs{j}^2+1)(e^{\iu\tau \abs{j}^2} - 1 + \iu\tau)} \, v_j , \qquad j\in\mathcal{K} . 
\end{equation}
Using the estimate \eqref{eq-aux-denom} in the denominator and $\abs{\e^{\iu \xi}-1-\iu \xi} = \abs{ \int_0^{\xi} \int_0^{\eta} \iu^2 \e^{\iu \nu} \,\drm \nu \,\drm \eta} \le \abs{\xi}^2$ in the numerator, we see that the absolute value of \eqref{eq-aux-fouriercoeff2} is bounded by $C \tau \abs{v_j}$. This yields the second claimed estimate.
\end{proof}

\section{Error analysis of the semi-discretization in space}\label{sec-proof-semi}

In this section, we give the proof of the error bound of Theorem \ref{thm-semi} for the Fourier collocation in space \eqref{eq-zakK}. We do so by interpreting the semi-discretization in space in the new variables \eqref{eq-zakK-new} as a discretization of the Zakharov system in the new variables \eqref{eq-zak-new}, and we study the error of this discretization. Translating the result back to the original variables then leads to the error bound of Theorem \ref{thm-semi}.

More precisely, the proof is organized as follows. We start in Section \ref{subsec-semi-voc} below with the variation-of-constants formulas for the exact solution in new variables and the solution of the spatial semi-discretization in new variables. In Section \ref{subsec-semi-bounds}, we then derive bounds on all the terms that appear in the difference of the two variation-of-constants formulas. They are derived under an additional regularity assumption on the solution of the semi-discretization in space. The estimates include in particular error bounds for the variables $\psi_I$ and $\psi_P$ and bounds on the operator $R$ of \eqref{eq-R}. In the final Section~\ref{subsec-semi-proof}, the bounds of Section \ref{subsec-semi-bounds} are used to derive Theorem \ref{thm-semi} with a Gronwall inequality. The additional regularity assumption of Section \ref{subsec-semi-bounds} on the solution of the semi-discretization in space is justified with a bootstrap argument. 

Throughout this section, we let $s>\frac{d}2$ and $\si>0$ as in Theorem \ref{thm-semi}, and we denote by $C$ a generic constant that may depend on $s$ and $\si$ and in addition on the dimension $d$, on the final time $T$ and on the constant $M$ of the regularity assumption \eqref{eq-regularity} of Theorem \ref{thm-semi}. 

\subsection{Variation-of-constants formula}\label{subsec-semi-voc}

The proof of Theorem \ref{thm-semi} is based on the variation-of-constants formula applied to the difference of  the spatial semi-discretization in the new variables \eqref{eq-zakK-new} and the exact solution in the new variables \eqref{eq-zak-new}. This reads ($-\Delta=\Om^2$)
\begin{equation}\label{eq-voc-semi}\begin{split}
\ph(t)-\ph^K(t) &= \e^{-\iu(t-t_0)\Om^2}\klabig{\ph(t_0)-\ph^K(t_0)} - \iu \int_{t_0}^t \e^{-\iu(t-t')\Om^2} \theta(t_0+t') \,\drm t',\\ 
\begin{pmatrix} u(t)-u^K(t) \\ \dot{u}(t)-\dot{u}^K(t) \end{pmatrix} &= R(t-t_0) \begin{pmatrix} u(t_0)-u^K(t_0)\\ \dot{u}(t_0)-\dot{u}^K(t_0) \end{pmatrix} 
- \int_{t_0}^t R(t-t')  \begin{pmatrix} 0 \\ \Om^2 \vartheta(t_0+t') \end{pmatrix} \, \drm t'
\end{split}\end{equation}
with
\[
\theta = \klabig{u\ph+\dot{u}\psi_I} - \Ic \klabig{u^K \ph^K + \dot{u}^K \psi_I^K} \myand
\vartheta = \abs{\psi_P}^2 - \Ic \klabig{ \abs{\psi_P^K}^2}.
\]
In Section \ref{subsec-semi-bounds} below, we bound the terms on the right-hand side of \eqref{eq-voc-semi}. The proof of Theorem \ref{thm-semi} is then given in Section \ref{subsec-semi-proof}.

\subsection{Bounds on the terms in the variation-of-constants formula}\label{subsec-semi-bounds}

We denote by
\begin{equation}\label{eq-errormax-semi}
e(t) = \sup_{t_0\le t' \le t} \klaBig{ \normbig{\ph(t')-\ph^K(t')}_s + \normvbig{\klabig{u(t'),\dot{u}(t')} - \klabig{u^K(t'),\dot{u}^K(t')}}_{s} }
\end{equation}
the maximal error in $\ph$, $u$ and $\dot{u}$ until time $t$. 

In the following lemmas, we will assume regularity \eqref{eq-regularity} (see also \eqref{eq-regularity-phi}) of the exact solution. In addition to that, we will also assume regularity of the semi-discrete solution \eqref{eq-zakK}:  
\begin{equation}\label{eq-regularityK}
\normbig{\psi^K(t)}_{s+2} + \normvbig{\klabig{u^K(t),\dot{u}^K(t)}}_{s} \le 2 M \myfor 0\le t-t_0\le T
\end{equation}
with $M$ from the regularity assumption \eqref{eq-regularity}. The bound \eqref{eq-regularityK} will be justified in final proof of Theorem \ref{thm-semi} below by a bootstrap argument (note that the norms in \eqref{eq-regularityK} are those in which the error bound of Theorem~\ref{thm-semi} has to be shown). 

\begin{lemma}[Error in $\psi_I$]\label{lemma-error-psiI}
Under the assumption \eqref{eq-regularity}, we have, for $0\le t\le T$,
\[
\normbig{\psi_I(t) - \psi_I^K(t) }_{s} \le C \klabig{K^{-2-\si} + e(t) }.
\]
\end{lemma}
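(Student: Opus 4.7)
The plan is to work directly from the defining integral formulas for $\psi_I$ and $\psi_I^K$ given in \eqref{eq-zak-new-2} and \eqref{eq-zakK-new-2}. Subtracting them gives
\[
\psi_I(t) - \psi_I^K(t) = \klabig{\psi(t_0) - \psi^K(t_0)} + \int_{t_0}^{t} \klabig{\ph(t') - \ph^K(t')}\,\drm t',
\]
so the argument reduces to estimating these two pieces separately in the $H^s$ norm.

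For the initial datum piece, I would use that $\psi^K(t_0) = \Ic(\psi(t_0))$ together with the interpolation error estimate of Lemma~\ref{lemma-inter}, applied with $\si$ replaced by $2+\si$. Combined with the regularity assumption \eqref{eq-regularity} (which guarantees $\norm{\psi(t_0)}_{s+2+\si}\le M$), this yields
\[
\normbig{\psi(t_0) - \psi^K(t_0)}_s = \normbig{\psi(t_0) - \Ic\klabig{\psi(t_0)}}_s \le C K^{-(2+\si)} \norm{\psi(t_0)}_{s+2+\si} \le C K^{-2-\si}.
\]

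For the integral piece, I would pull the $H^s$ norm inside the integral via the triangle inequality, then bound the integrand by the running maximum $e(t')\le e(t)$ from \eqref{eq-errormax-semi}:
\[
\normbigg{\int_{t_0}^{t} \klabig{\ph(t') - \ph^K(t')}\,\drm t'}_s \le \int_{t_0}^{t} \normbig{\ph(t') - \ph^K(t')}_s\,\drm t' \le (t-t_0)\, e(t) \le T\, e(t).
\]
Adding the two bounds gives the claimed estimate. No real obstacle is expected here — this lemma is essentially a clean use of the interpolation lemma plus the definition of $e$; the technical work is deferred to the companion estimates for $\psi_P$ and for the nonlinearities $\theta$ and $\vartheta$, which is where the algebra property \eqref{eq-algebra} and the regularity bootstrap \eqref{eq-regularityK} will actually play a role.
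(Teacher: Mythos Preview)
Your proposal is correct and matches the paper's proof essentially line for line: the paper also appeals directly to the definitions of $\psi_I$, $\psi_I^K$ and $e$, and to Lemma~\ref{lemma-inter} for the interpolation error $\psi(t_0)-\Ic(\psi(t_0))$. Your write-up simply makes explicit the two-term decomposition and the $K^{-2-\si}$ gain from the extra regularity $\psi(t_0)\in H^{s+2+\si}$ that the paper leaves implicit.
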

\begin{proof}
This follows from the definitions of $\psi_I$ in \eqref{eq-zak-new-2}, $\psi_I^K$ in \eqref{eq-zakK-new-2} and $e$ in \eqref{eq-errormax-semi}, and from Lemma \ref{lemma-inter} applied to the interpolation error $\psi(t_0)-\psi^K(t_0)=\psi(t_0)-\Ic(\psi(t_0))$.
\end{proof}

\begin{lemma}[Error in $\psi_P$]\label{lemma-error-psiP}
Under the assumption \eqref{eq-regularity}, we have, for $0\le t\le T$,
\[
\normbig{\psi_P(t) - \psi_P^K(t) }_{s+2} \le C \klabig{K^{-\si} + e(t) }.
\]
\end{lemma}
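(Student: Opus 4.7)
The plan is to subtract the defining formulas \eqref{eq-zak-new-2} and \eqref{eq-zakK-new-2} for $\psi_P$ and $\psi_P^K$, split the resulting difference into an interpolation-error term plus an error-propagation term, and exploit the fact that $(-\Delta+1)^{-1}$ gains two derivatives. Concretely, I would first write
\[
\psi_P - \psi_P^K = \klabig{-\Delta+1}^{-1} \Bigl[ \klabig{\iu \ph + \psi_I - u\psi_I} - \Ic\klabig{\iu \ph^K + \psi_I^K - u^K \psi_I^K} \Bigr],
\]
insert $\pm\,\Ic\klabig{\iu \ph + \psi_I - u\psi_I}$ inside the brackets, and thereby split the bracket into an interpolation-error term $(1-\Ic)\klabig{\iu\ph + \psi_I - u\psi_I}$ plus an error-propagation term $\Ic\Bigl[ \iu(\ph-\ph^K) + (\psi_I-\psi_I^K) - (u\psi_I - u^K \psi_I^K) \Bigr]$. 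Since $(-\Delta+1)^{-1}$ is the Fourier multiplier $(\abs{j}^2+1)^{-1}$, it satisfies $\norm{(-\Delta+1)^{-1}v}_{s+2} \le \norm{v}_s$ trivially, so it suffices to bound both bracketed expressions in the $H^s$ norm.

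For the interpolation-error term, I would apply Lemma~\ref{lemma-inter} with $\sigma$ extra derivatives to produce the gain $K^{-\sigma}$, and then use the regularity \eqref{eq-regularity}, the bound \eqref{eq-regularity-phi} on $\ph$, the identity $\psi_I(t)=\psi(t)$, and the algebra property \eqref{eq-algebra} (valid since $s+\sigma>\frac{d}{2}$) to bound $\norm{\iu\ph + \psi_I - u\psi_I}_{s+\sigma}$ by a constant depending only on $M$, $d$, $s$ and $\sigma$.

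For the error-propagation term, I would use the boundedness of $\Ic$ on $H^s$ (Lemma~\ref{lemma-inter}) to drop $\Ic$, then decompose the nonlinearity difference as $u\psi_I - u^K\psi_I^K = u(\psi_I-\psi_I^K) + (u-u^K)\psi_I^K$ and bound each product via the algebra property. The factor $\norm{\ph - \ph^K}_s$ is controlled by $e(t)$ directly from \eqref{eq-errormax-semi}; the factor $\norm{\psi_I-\psi_I^K}_s$ is controlled by $C\klabig{K^{-2-\sigma}+e(t)}$ via Lemma~\ref{lemma-error-psiI}; and $\norm{u-u^K}_s \le \norm{u-u^K}_{s+1} \le \normv{(u,\dot u)-(u^K,\dot u^K)}_s \le e(t)$. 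The remaining multiplicative factors $\norm{u}_s$ and $\norm{\psi_I^K}_s = \norm{\psi^K}_s$ are controlled by \eqref{eq-regularity} and by the working hypothesis \eqref{eq-regularityK}, respectively. Combining the two estimates and absorbing $K^{-2-\sigma}$ into $K^{-\sigma}$ yields the claim.

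I do not anticipate any genuine obstacle here; the argument is routine bookkeeping once one accepts the philosophy of the new variables. The only conceptual point is that the factor $(-\Delta+1)^{-1}$ in the definitions of $\psi_P$ and $\psi_P^K$ is precisely what lifts the $H^s$ estimates for the bracketed terms to the desired $H^{s+2}$ estimate, embodying the gain of two derivatives that motivated the transformation in the first place.
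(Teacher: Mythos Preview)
Your proposal is correct and follows essentially the same route as the paper: the same splitting into an interpolation-error term $(1-\Ic)(\iu\ph+\psi_I-u\psi_I)$ and an error-propagation term, the same use of $(-\Delta+1)^{-1}$ to gain two derivatives, and the same ingredients (Lemma~\ref{lemma-inter}, \eqref{eq-regularity}, \eqref{eq-regularity-phi}, \eqref{eq-regularityK}, \eqref{eq-algebra}, Lemma~\ref{lemma-error-psiI}). The only cosmetic difference is that the paper telescopes the product as $(u-u^K)\psi_I + u^K(\psi_I-\psi_I^K)$, whereas you write $u(\psi_I-\psi_I^K) + (u-u^K)\psi_I^K$; both variants work equally well.
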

\begin{proof}
We write
\begin{multline*}
\klabig{-\Delta+1} \klabig{\psi_P - \psi_P^K} = (1-\Ic) \klabig{\iu \ph + \psi_I - u \psi_I } + \Ic \klaBig{ \klabig{\iu\ph-\iu\ph^K}\\ + \klabig{\psi_I-\psi_I^K} + \klabig{u-u^K}\psi_I + u^K\klabig{\psi_I- \psi_I^K} }.
\end{multline*}
By the algebra property \eqref{eq-algebra} and the bounds \eqref{eq-regularity} and \eqref{eq-regularity-phi}, we have $\norm{\iu\ph+\psi_I-u\psi_I}_{s+\si}\le C$. The interpolation error in the above decomposition can thus be estimated with Lemma~\ref{lemma-inter}:
\[
\normbig{(1-\Ic) \klabig{\iu \ph + \psi_I - u \psi_I } }_{s} \le C K^{-\si}.
\]
The statement of the lemma then follows from Lemma \ref{lemma-inter}, the bounds \eqref{eq-regularity} on $\psi=\psi_I$ and \eqref{eq-regularityK} on $u^K$, the algebra property \eqref{eq-algebra}, the definition \eqref{eq-errormax-semi} of $e(t)$ and Lemma \ref{lemma-error-psiI} on $\psi_I-\psi_I^K$ applied to the second term in the above decomposition.
\end{proof}

\begin{lemma}[Bound of $\theta$]\label{lemma-bound-theta}
Under the regularity assumptions \eqref{eq-regularity} and \eqref{eq-regularityK}, we have, for $0\le t\le T$,
\[
\norm{\theta(t)}_{s} \le C \klabig{ K^{-\si} + e(t) }.
\]
\end{lemma}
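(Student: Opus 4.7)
The plan is to split $\theta$ into an interpolation-error term plus a term measuring the error of the semi-discrete quantities, and then estimate each piece using the algebra property \eqref{eq-algebra}, the interpolation bounds from Lemma~\ref{lemma-inter}, the regularity assumptions \eqref{eq-regularity} and \eqref{eq-regularity-phi} on the exact solution, the assumed regularity \eqref{eq-regularityK} of the semi-discrete solution, the definition \eqref{eq-errormax-semi} of $e(t)$, and Lemma~\ref{lemma-error-psiI} to handle $\psi_I-\psi_I^K$.

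Concretely, I would add and subtract $\Ic(u\ph+\dot u\psi_I)$ to write
\[
\theta = (1-\Ic)\klabig{u\ph+\dot u\psi_I} + \Ic\klaBig{(u-u^K)\ph + u^K(\ph-\ph^K) + (\dot u-\dot u^K)\psi_I + \dot u^K(\psi_I-\psi_I^K)}.
\]
For the first summand I would note that by \eqref{eq-algebra} together with the regularity bounds \eqref{eq-regularity} and \eqref{eq-regularity-phi} we have $\norm{u\ph+\dot u \psi_I}_{s+\si}\le C$, so the second estimate of Lemma~\ref{lemma-inter} gives a bound $CK^{-\si}$.

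For the second summand I would apply the first (boundedness) estimate of Lemma~\ref{lemma-inter} to $\Ic$, then use the algebra property \eqref{eq-algebra} on each of the four products. The factors that are exact solution quantities ($u,\ph,\dot u,\psi_I=\psi$) are bounded via \eqref{eq-regularity} and \eqref{eq-regularity-phi}; the factor $u^K$ and $\dot u^K$ are bounded in $H^s$ by the semi-discrete regularity assumption \eqref{eq-regularityK}. The differences $u-u^K$, $\ph-\ph^K$, $\dot u-\dot u^K$ are controlled by $e(t)$ via \eqref{eq-errormax-semi}, and the difference $\psi_I-\psi_I^K$ is controlled by Lemma~\ref{lemma-error-psiI}, which yields $C(K^{-2-\si}+e(t))$. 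Combining all four products gives a bound $C(K^{-2-\si}+e(t))\le C(K^{-\si}+e(t))$.

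Adding the two contributions yields the claimed bound $\norm{\theta(t)}_s\le C(K^{-\si}+e(t))$. No real obstacle is expected: the estimate is essentially bookkeeping, and the only slightly delicate point is to ensure that each product is estimated in a norm in which one factor lies in the algebra $H^s$ and the other factor is bounded in a compatible norm, so that \eqref{eq-algebra} applies with constants independent of $K$.
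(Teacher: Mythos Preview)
Your proposal is correct and follows essentially the same approach as the paper: the same decomposition $\theta = (1-\Ic)(u\ph+\dot u\psi_I) + \Ic(\cdots)$, the same use of Lemma~\ref{lemma-inter} and \eqref{eq-algebra} on the interpolation error, and the same combination of \eqref{eq-regularity}, \eqref{eq-regularity-phi}, \eqref{eq-regularityK}, \eqref{eq-errormax-semi} and Lemma~\ref{lemma-error-psiI} for the second part.
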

\begin{proof}
We write 
\begin{multline*}
\theta = (1-\Ic) \klabig{u\ph+\dot{u}\psi_I} + \Ic \klaBig{ \klabig{u-u^K}\ph + u^K\klabig{\ph-\ph^K}\\ + \klabig{\dot{u}-\dot{u}^K}\psi_I + \dot{u}^K\klabig{\psi_I-\psi_I^K}}.
\end{multline*}
As in the proof of Lemma \ref{lemma-error-psiP}, we get 
\[
\normbig{(1-\Ic) \klabig{u\ph+\dot{u}\psi_I}}_s \le C K^{-\si}
\]
for the interpolation error in the above decomposition, and we then apply Lemma \ref{lemma-inter}, the algebra property \eqref{eq-algebra}, the bounds \eqref{eq-regularity}, \eqref{eq-regularity-phi} and \eqref{eq-regularityK}, the definition \eqref{eq-errormax-semi} of $e(t)$ and Lemma \ref{lemma-error-psiI} on $\psi_I-\psi_I^K$ to the second term in the above decomposition.
\end{proof}

\begin{lemma}[Bound of $\vartheta$]\label{lemma-bound-vartheta}
Under the regularity assumptions \eqref{eq-regularity} and \eqref{eq-regularityK}, we have, for $0\le t\le T$,
\[
\norm{\vartheta(t)}_{s+2} \le C \klabig{ K^{-\si} + e(t) }.
\]
\end{lemma}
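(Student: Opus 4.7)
The plan is to mimic the strategy used for Lemma \ref{lemma-bound-theta}, splitting $\vartheta$ into an interpolation error plus a factored difference of quadratic terms. Concretely, I would write
\[
\vartheta = (1-\Ic)\abs{\psi_P}^2 + \Ic\klaBig{ (\psi_P-\psi_P^K)\overline{\psi_P} + \psi_P^K \klabig{\overline{\psi_P}-\overline{\psi_P^K}} }
\]
and estimate the two pieces separately in the $H^{s+2}$ norm.

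For the interpolation error, since $s>\frac{d}2$ and therefore $s+2+\si>\frac{d}2$, the algebra property \eqref{eq-algebra} together with the regularity assumption \eqref{eq-regularity} yields $\norm{\abs{\psi_P}^2}_{s+2+\si} = \norm{\abs{\psi}^2}_{s+2+\si}\le C\norm{\psi}_{s+2+\si}^2\le CM^2$, and then Lemma \ref{lemma-inter} gives
\[
\normbig{(1-\Ic)\abs{\psi_P}^2}_{s+2}\le C K^{-\si}.
\]

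For the second piece, I would first apply Lemma \ref{lemma-inter} to remove the outer $\Ic$ (again permissible since $s+2>\frac{d}2$), then the algebra property to split into pointwise products, giving bounds of the form $C\norm{\psi_P-\psi_P^K}_{s+2}(\norm{\psi_P}_{s+2}+\norm{\psi_P^K}_{s+2})$. The two $\psi_P$-norms are controlled by \eqref{eq-regularity} and \eqref{eq-regularityK} (using $\psi_P=\psi$ and $\psi_P^K=\psi^K$), while the difference $\norm{\psi_P-\psi_P^K}_{s+2}$ is precisely what Lemma \ref{lemma-error-psiP} bounds by $C(K^{-\si}+e(t))$. Adding the two contributions gives the claimed bound.

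No real obstacle is anticipated: the argument is almost identical to those for Lemmas \ref{lemma-error-psiP} and \ref{lemma-bound-theta}, with the only thing to be slightly careful about being that the target norm here is $H^{s+2}$ rather than $H^s$. This is comfortable because the two spatial derivatives gained in $\psi_P=(-\Delta+1)^{-1}(\cdots)$ (and analogously for $\psi_P^K$) precisely make Lemma \ref{lemma-error-psiP} provide control of $\psi_P-\psi_P^K$ in $H^{s+2}$, so the algebra estimates go through unchanged.
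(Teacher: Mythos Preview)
Your proposal is correct and follows essentially the same route as the paper. The only cosmetic difference is the factorization of the quadratic difference: the paper writes $\abs{\psi_P}^2-\abs{\psi_P^K}^2 = \ReT\bigl((\psi_P-\psi_P^K)\overline{(\psi_P+\psi_P^K)}\bigr)$ instead of your telescoping $(\psi_P-\psi_P^K)\overline{\psi_P}+\psi_P^K(\overline{\psi_P}-\overline{\psi_P^K})$, but the subsequent estimates via Lemma~\ref{lemma-inter}, the algebra property~\eqref{eq-algebra}, the bounds \eqref{eq-regularity} and \eqref{eq-regularityK}, and Lemma~\ref{lemma-error-psiP} are identical.
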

\begin{proof}
We write 
\[
\vartheta = (1-\Ic) \klabig{\abs{\psi_P}^2} +  \Ic \klaBig{ \ReT \klaBig{ \klabig{\psi_P - \psi_P^K} \klabig{\overline{\psi_P+\psi_P^K}} }}.
\]
The claimed estimate then follows from Lemma \ref{lemma-inter}, the bounds \eqref{eq-regularity} and \eqref{eq-regularityK} on $\psi=\psi_p$ and $\psi^K=\psi_P^K$, respectively, the algebra property \eqref{eq-algebra} and Lemma \ref{lemma-error-psiP} on $\psi_P-\psi_P^K$.
\end{proof}

In addition to the above bounds on $\theta$ and $\vartheta$, we need the following bound on the matrix $R$ of \eqref{eq-R}, which also appears in the variation-of-constants formula \eqref{eq-voc-semi}. The bound states that this matrix almost preserves the norm $\normv{\cdot}_s$.

\begin{lemma}\label{lemma-bound-R}
We have
\[
\normvbig{ R(t) \klabig{\begin{smallmatrix} v\\ \dot{v} \end{smallmatrix}} }_s \le (1+\abs{t}) \normvbig{ \klabig{\begin{smallmatrix} v\\ \dot{v} \end{smallmatrix}} }_s, \qquad \klabig{\begin{smallmatrix} v\\ \dot{v} \end{smallmatrix}} \in H^{s+1}\times H^s.
\]
\end{lemma}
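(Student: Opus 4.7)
The plan is to diagonalize both $R(t)$ and the norm $\normv{\cdot}_s$ in the Fourier basis and then estimate mode by mode. Writing
\[
V = \cos(t\Om)v + t\sinc(t\Om)\dot v, \qquad \dot V = -\Om\sin(t\Om)v + \cos(t\Om)\dot v,
\]
the $j$th Fourier coefficient of $(V,\dot V)$ is obtained by replacing $\Om$ with $\abs{j}$ in the matrix $R(t)$, and since $\normv{(V,\dot V)}_s^2 = \sum_j \klabig{\max(\abs{j},1)^{2s+2}\abs{V_j}^2 + \max(\abs{j},1)^{2s}\abs{\dot V_j}^2}$, it suffices to establish the pointwise inequality
\[
\max(\abs{j},1)^{2s+2}\abs{V_j}^2 + \max(\abs{j},1)^{2s}\abs{\dot V_j}^2 \le (1+\abs{t})^2\klabig{\max(\abs{j},1)^{2s+2}\abs{v_j}^2 + \max(\abs{j},1)^{2s}\abs{\dot v_j}^2}
\]
for every $j\in\mathbb{Z}^d$ and then sum over $j$.

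For $j\ne 0$, I would introduce the rescaled first components $\tilde v_j := \abs{j}v_j$ and $\tilde V_j := \abs{j}V_j$. The identity $t\abs{j}\sinc(t\abs{j}) = \sin(t\abs{j})$ shows that the map $(\tilde v_j,\dot v_j) \mapsto (\tilde V_j,\dot V_j)$ is precisely the rotation by angle $t\abs{j}$, hence an isometry on $\mathbb{C}^2$. This gives the exact conservation $\abs{j}^2\abs{V_j}^2 + \abs{\dot V_j}^2 = \abs{j}^2\abs{v_j}^2 + \abs{\dot v_j}^2$, and multiplying by $\abs{j}^{2s}$ yields the pointwise bound with \emph{no} factor of $(1+\abs{t})$ at all.

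For the zero mode, $R(t)$ reduces to the unipotent matrix $\left(\begin{smallmatrix}1 & t \\ 0 & 1\end{smallmatrix}\right)$, so $V_0 = v_0 + t\dot v_0$ and $\dot V_0 = \dot v_0$. Setting $a=\abs{v_0}$ and $b=\abs{\dot v_0}$, the remaining task is the elementary inequality $(a+\abs{t}b)^2 + b^2 \le (1+\abs{t})^2(a^2+b^2)$, which after expansion rearranges to $t^2 a^2 + 2\abs{t}(a^2-ab+b^2)\ge 0$, manifestly true since $a^2-ab+b^2 = (a-\sfrac12 b)^2 + \sfrac34 b^2 \ge 0$. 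This is the single place where the factor $(1+\abs{t})$ enters.

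I do not expect a real obstacle: the lemma is a direct Fourier-multiplier calculation. The only mild subtlety is that the zero mode must be handled separately, because there the propagator $R(t)$ has a unipotent (linear-in-$t$) action, while every non-zero mode undergoes a pure rotation once the first component is rescaled by $\abs{j}$.
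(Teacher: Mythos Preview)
Your proof is correct and follows essentially the same approach as the paper: both exploit that on each nonzero Fourier mode the propagator $R(t)$ acts as a rotation (hence isometry) after rescaling the first component by $\abs{j}$, while the zero mode acts unipotently and is the sole source of the factor $(1+\abs{t})$. The only cosmetic difference is that the paper applies the triangle inequality at the level of the full norm to split off the $t\dot v_0$ contribution, whereas you prove the squared pointwise inequality for $j=0$ directly and then sum.
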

\begin{proof}
Let $v(x)=\sum_{j\in\mathbb{Z}^d} v_j \, \e^{\iu j\cdot x}$ and $\dot{v}(x)=\sum_{j\in\mathbb{Z}^d} v_j \, \e^{\iu j\cdot x}$.
Note that, by the definition of the norm $\normv{\cdot}_s$,
\begin{multline*}
\normvbig{ R(t) \klabig{\begin{smallmatrix} v\\ \dot{v} \end{smallmatrix}} }_s = \klaBig{\normbig{\Om^{s+1} \klabig{\cos(t\Om)v + t\sinc(t\Om)\dot{v}}}_0^2 + \absbig{v_0 + t\dot{v}_0}^2\\
 + \normbig{\Om^{s}\klabig{-\Om\sin(t\Om)v + \cos(t\Om)\dot{v}}}_0^2 + \abs{\dot{v}_0}^2}^{1/2}.
\end{multline*}
By the triangle inequality, we can estimate this quantity by the sum of 
\begin{multline*}
\klaBig{\normbig{\Om^{s+1}\cos(t\Om)v + \Om^s\sin(t\Om)\dot{v}}_0^2 + \abs{v_0}^2\\
 + \normbig{-\Om^{s+1}\sin(t\Om)v + \Om^s\cos(t\Om)\dot{v}}_0^2 + \abs{\dot{v}_0}^2}^{1/2} = \normvbig{\klabig{\begin{smallmatrix} v\\ \dot{v} \end{smallmatrix}} }_s
\end{multline*}
and
\[
\klabig{\abs{t \dot{v}_0}^2}^{1/2} \le \abs{t} \cdot \normvbig{\klabig{\begin{smallmatrix} v\\ \dot{v} \end{smallmatrix}} }_0 \le \abs{t} \cdot \normvbig{\klabig{\begin{smallmatrix} v\\ \dot{v} \end{smallmatrix}} }_s.
\]
This yields the claimed estimate.
\end{proof}

\subsection{Proof of Theorem \ref{thm-semi}}\label{subsec-semi-proof}

We start from the variation-of-constants formulas \eqref{eq-voc-semi}. Taking norms in this formulas and using Lemmas \ref{lemma-bound-theta}, \ref{lemma-bound-vartheta} and \ref{lemma-bound-R} on $\theta$, $\vartheta$ and $R$, respectively, yields for the error $e$ of~\eqref{eq-errormax-semi}
\[
e(t) \le (1+\abs{t-t_0}) e(t_0) + C \int_{t_0}^t (1+\abs{t-t'}) \klabig{K^{-\si}+e(t')} \,\drm t'
\]
as long as the semi-discrete solution satisfies the bounds \eqref{eq-regularityK}. 
Using Lemma \ref{lemma-inter} to estimate $e(t_0)$ and the Gronwall lemma, this implies
\[
e(t) \le C K^{-\si}
\]
for $0\le t-t_0\le T$ and as long as \eqref{eq-regularityK} holds.
This estimate contains the error bounds for $u$ and $\dot{u}$ of Theorem \ref{thm-semi}. To get also the error bound for $\psi=\psi_P$ of Theorem \ref{thm-semi}, we use Lemma \ref{lemma-error-psiP}. 

To complete the proof, we still have to justify that \eqref{eq-regularityK} holds. As $\si>0$, the above error bound and the regularity assumption~\eqref{eq-regularity} on the exact solution show that assumption \eqref{eq-regularityK} even holds with the better constant $\tfrac32 M$ instead of $2M$, provided that~$K$ is sufficiently large (such that $CK^{-\sigma}\le\tfrac12 M$, where $C$ is the constant of the error bound). By a bootstrap argument (see, e.g., \cite[Section 1.3]{Tao2006}), the proof of Theorem~\ref{thm-semi} is thus complete.

\section{Error analysis of the full discretization}\label{sec-proof}

In this section, we give the proof of the global error bound for the Lie-Trotter splitting~\eqref{eq-split} stated in Theorem \ref{thm-main}. As in the previous section, we interpret the splitting method in the new variables \eqref{eq-split-new} as a discretization of the Zakharov system in the new variables \eqref{eq-zak-new}, and we study the error of this discretization. 
In view of Theorem \ref{thm-semi} on the error of the semi-discretization in space, we only have to consider here the temporal errors (in the new variables of Section \ref{sec-transformation})
\begin{equation}\label{eq-error-temporal}
\ph^K(t_{n})-\ph_{n}^K, \qquad u^K(t_{n})-u_{n}^K, \qquad \dot{u}^K(t_{n})-\dot{u}_{n}^K,
\end{equation}
with the space-discrete solutions $\ph^K$, $u^K$ and $\dot{u}^K$ of \eqref{eq-zakK-new}. 

Throughout this section, we let $s>\frac{d}2$ and $\si\ge 2$ as in Theorem \ref{thm-main}. As in the previous section, we denote by $C$ a generic constant that may depend on $s$, $\si$, the dimension $d$, the final time $T$ and on the constant $M$ of the regularity assumption~\eqref{eq-regularity}, and now in addition on the constant $c$ of the CFL condition \eqref{eq-cfl}.

\subsection{Lady Windermere's fan and outline of the proof}\label{subsec-windermere}

We decompose the temporal errors \eqref{eq-error-temporal} after $n+1$ time steps as
\begin{equation}\label{eq-decom}
\ph^K(t_{n+1})-\ph_{n+1}^K = \klaBig{\ph^K(t_{n+1})-\widehat{\ph}_{n+1}^K} + \klaBig{\widehat{\ph}_{n+1}^K - \ph_{n+1}^K}
\end{equation}
and similarly for $u(t_{n+1})-u_{n+1}^K$ and $\dot{u}(t_{n+1})-\dot{u}_{n+1}^K$. In this decomposition, 
\[
\widehat{\ph}_{n+1}^K, \qquad \widehat{u}_{n+1}^K, \qquad \widehat{\dot{u}}_{n+1}^K
\]
are (essentially) numerical solutions at time $t_{n+1}=t_0+(n+1)\tau$ when starting with 
the exact solution to the spatial semi-discretization \eqref{eq-zakK-new} as initial values at time $t_n=t_0+n\tau$. More precisely,
\begin{subequations}\label{eq-split-hat}
\begin{equation}\label{eq-split-hat-1}\begin{split}
\widehat{\ph}_{n+1}^K &= \e^{-\iu\tau\Om^2} \Ic \klaBig{\e^{-\iu \tau \widehat{v}_{n}^K} \klabig{\ph^K(t_n)  -\iu\tau \widehat{w}_n^K \phi\klabig{-\iu\tau^2 \widehat{w}_n^K} \widehat{\psi}_{I,n}^K} },\\
\begin{pmatrix} \widehat{u}_{n+1}^K\\ \widehat{\dot{u}}_{n+1}^K \end{pmatrix} &= R(\tau) \begin{pmatrix} u^K(t_{n})\\ \dot{u}^K(t_{n}) \end{pmatrix} 
+ \klabig{R(\tau)-1}  \begin{pmatrix} \Ic \klabig{\abs{\widehat{\psi}_{P,n}^K}^2} \\ 0 \end{pmatrix}
\end{split}\end{equation}
with
\begin{equation}\label{eq-split-hat-vw}\begin{split}
\widehat{v}_{n}^K &= \sinc\kla{\tau\Om} u^K(t_{n}) - \sfrac12 \tau \sinc\klabig{\sfrac12 \tau\Om}^2 \dot{u}^K(t_{n}) + \klabig{\sinc\kla{\tau\Om}-1} \Ic \klabig{\abs{\widehat{\psi}_{I,n-1}^K}^2},\\
\widehat{w}_{n}^K &= \sinc\klabig{\sfrac12 \tau\Om}^2 \dot{u}^K(t_n) + \klabig{\sinc\kla{\tau\Om}-1} \Ic \klaBig{ \ReT \klabig{(\widehat{\psi}_{I,n}^K+\widehat{\psi}_{I,n-1}^K) \overline{\ph^K}(t_n)} } 
\end{split}\end{equation}
and
\begin{equation}\label{eq-split-hat-psi}\begin{split}
\widehat{\psi}_{I,n}^K &= \psi_0^K + \tau \klabig{\ph^K(t_1)+\dots+\ph^K(t_n)},\\
\widehat{\psi}_{P,n}^K &= \klabig{\Om^2\phi\klabig{\iu\tau\Om^2}+1}^{-1} \Ic \klaBig{\iu\ph^K(t_{n}) + \widehat{\psi}_{I,n}^K - \widehat{v}_n^K\phi\klabig{\iu\tau \widehat{v}_{n}^K} \e^{\iu\tau\Om^2} \widehat{\psi}_{I,n}^K },
\end{split}\end{equation}
and with 
\begin{equation}\label{eq-split-hat-start}
  \widehat{\psi}_{P,0}^K=\psi_{P,0}^K=\psi_0^K, \qquad \widehat{\ph}_1^K = \ph_1^K.
\end{equation}
\end{subequations}
Note that $\widehat{\psi}_{I,n}^K$ and $\widehat{\psi}_{P,n}^K$ are here not necessarily identical (although $\psi_{I,n}^K=\psi_{P,n}^K$).

The above decomposition \eqref{eq-decom} is at the heart of the proof of Theorem \ref{thm-main} with Lady Windermere's fan.
In this decomposition, the differences 
\begin{equation}\label{eq-errors-stability}
\widehat{\ph}_{n+1}^K-\ph_{n+1}^K, \qquad \widehat{u}_{n+1}^K-u_{n+1}^K, \qquad \widehat{\dot{u}}_{n+1}^K-\dot{u}_{n+1}^K
\end{equation}
describe the propagation of the global error after $n$ time steps by the numerical method, and the differences
\begin{equation}\label{eq-errors-local}
\ph^K(t_{n+1})-\widehat{\ph}_{n+1}^K,\qquad u(t_{n+1})^K-\widehat{u}_{n+1}^K, \qquad \dot{u}^K(t_{n+1})-\widehat{\dot{u}}_{n+1}^K
\end{equation}
are local errors of the numerical method.

The crucial ingredients of the proof of Theorem \ref{thm-main} are bounds of these differences \eqref{eq-errors-stability} and \eqref{eq-errors-local}. Before deriving these bounds, however, we prove in Section \ref{subsec-full-bounds} below regularity properties of the solution to the spatial semi-discretization in new variables~\eqref{eq-zakK-new} and the intermediate solution \eqref{eq-split-hat}. These properties are then used in Section \ref{subsec-stability} below in combination with an additional regularity assumption on the fully discrete solution to study stability of the method by estimating the differences~\eqref{eq-errors-stability}. As the method in the new variables involves several auxiliary variables, we study stability in one of these variables after the other. In Section \ref{subsec-localerror} below, we then derive bounds on the local error~\eqref{eq-errors-local}. Again, we do this first for all auxiliary variables and then for the main variables appearing in \eqref{eq-errors-local}. In the final Section \ref{subsec-proof-main}, we put stability and local error bounds together to prove Theorem \ref{thm-main}, thereby ensuring the additional regularity assumption of Section~\ref{subsec-stability} by an inductive argument.

\subsection{Bounds on the spatially discrete and the intermediate solution}\label{subsec-full-bounds}

Before estimating error terms, we collect the following bounds for the solution of the spatial semi-discretization \eqref{eq-zakK-new} and for the intermediate solution defined in \eqref{eq-split-hat}. Both of them appear in the decomposition \eqref{eq-decom} of the error.

\begin{lemma}[Bound on the solution of the spatial semi-discretization]
Under the regularity assumption \eqref{eq-regularity}, we have, for $0\le t-t_0\le T$,
\begin{equation}\label{eq-regularityK-high}
\normbig{\psi^K(t)}_{s+4} + \normvbig{\klabig{u^K(t),\dot{u}^K(t)}}_{s+2} + \normbig{\ph^K(t)}_{s+2} \le C .
\end{equation}
\end{lemma}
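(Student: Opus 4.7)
The plan is to combine the error bound of Theorem \ref{thm-semi} with an elementary Bernstein-type inverse inequality for trigonometric polynomials, and then to read off the bound on $\ph^K$ directly from the semi-discrete Schr\"odinger equation. The assumption $\si\ge 2$ enters exactly at the inverse-inequality step.

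First, since $\si\ge 2$, the regularity assumption \eqref{eq-regularity} already gives $\norm{\psi(t)}_{s+4}\le M$ and $\normv{(u(t),\dot u(t))}_{s+2}\le M$; by Lemma \ref{lemma-inter} the same bounds (with possibly a larger constant) pass to the trigonometric interpolants $\Ic\psi(t)$ and $\Ic(u(t),\dot u(t))$. This provides the ``target'' estimates against which the semi-discrete solution will be compared.

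Second, I apply Theorem \ref{thm-semi} with these $s$ and $\si$ to get
\[
\normbig{\psi(t)-\psi^K(t)}_{s+2}+\normvbig{\klabig{u(t),\dot u(t)}-\klabig{u^K(t),\dot u^K(t)}}_{s}\le CK^{-\si}.
\]
Combined with the interpolation error from Lemma \ref{lemma-inter} this yields $\norm{\psi^K(t)-\Ic\psi(t)}_{s+2}\le CK^{-\si}$ and similarly $\normv{(u^K,\dot u^K)(t)-\Ic(u,\dot u)(t)}_{s}\le CK^{-\si}$. Both differences are trigonometric polynomials of degree $K$, so the elementary inverse inequality $\norm{w}_{s+4}\le dK^2\norm{w}_{s+2}$ (and its obvious analogue in the $\normv{\cdot}$-norm), which is immediate from the fact that only modes with $\abs{j}\le dK$ contribute, upgrades these estimates to
\[
\normbig{\psi^K(t)-\Ic\psi(t)}_{s+4}+\normvbig{(u^K,\dot u^K)(t)-\Ic(u,\dot u)(t)}_{s+2}\le CK^{2-\si}\le C,
\]
precisely because $\si\ge 2$. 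A triangle inequality with the first step then delivers the two bounds on $\psi^K(t)$ in $H^{s+4}$ and on $(u^K(t),\dot u^K(t))$ in the $\normv{\cdot}_{s+2}$-norm.

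Finally, to bound $\ph^K=\partial_t\psi^K$ I simply rearrange the Schr\"odinger equation of \eqref{eq-zakK} as
\[
\ph^K=-\iu\Om^2\psi^K-\iu\Ic\klabig{u^K\psi^K}.
\]
The first term is bounded in $H^{s+2}$ by $\norm{\psi^K(t)}_{s+4}$, and the second by $C\norm{u^K(t)}_{s+2}\norm{\psi^K(t)}_{s+2}$ via Lemma \ref{lemma-inter} and the algebra property \eqref{eq-algebra}; both are already controlled. The only delicate point in the whole argument is the limiting case $\si=2$, in which the inverse inequality produces the borderline factor $K^{2-\si}=1$ and is therefore tight; this is exactly the place where the standing hypothesis $\si\ge 2$ in Theorem \ref{thm-main} is consumed in order to secure a bound on $\psi^K$ that is uniform in~$K$.
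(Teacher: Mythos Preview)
Your argument is correct and follows essentially the same route as the paper: decompose $\psi^K$ (and analogously $(u^K,\dot u^K)$) via $\Ic\psi(t)$, control the interpolant in the higher norm directly from \eqref{eq-regularity} and Lemma~\ref{lemma-inter}, and upgrade the error bound of Theorem~\ref{thm-semi} with the inverse inequality $\norm{v}_{s+4}\le dK^2\norm{v}_{s+2}$ for trigonometric polynomials, which is exactly what $\si\ge 2$ is spent on. Your treatment of $\ph^K$ via the semi-discrete Schr\"odinger equation is also the paper's argument (the paper simply points to \eqref{eq-regularity-phi}).
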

\begin{proof}
We only prove the estimate of $\psi^K$. The estimates of $u^K$ and $\dot{u}^K$ are obtained similarly, and the estimate of $\ph^K$ follows from these estimates as in \eqref{eq-regularity-phi}.

We decompose
\[
\psi^K = \klaBig{\psi^K(t)-\Ic\klabig{\psi(t)}} + \Ic\klabig{\psi(t)}.
\]
The second term can be estimated with Lemma \ref{lemma-inter} and \eqref{eq-regularity} (note that $\si\ge 2$).
For the first term, we use that
\begin{align*}
&\normbig{\psi^K(t)-\Ic\klabig{\psi(t)}}_{s+4} \le d K^2 \normbig{\psi^K(t)-\Ic\klabig{\psi(t)}}_{s+2}\\
 &\qquad\qquad\qquad \le d K^2 \normbig{\psi^K(t)-\psi(t)}_{s+2} + d K^2 \normbig{\Ic\klabig{\psi(t)} - \psi(t)}_{s+2} \le C
\end{align*}
by the inverse estimate $\norm{v}_{s+4}\le d K^2 \norm{v}_{s+2}$ for trigonometric polynomials $v$ of degree $K$, the error bound of Theorem \ref{thm-semi}, Lemma \ref{lemma-inter} and the bound \eqref{eq-regularity} (note again that $\si\ge 2$). This yields the claimed estimate of $\psi^K$.
\end{proof}

\begin{lemma}[Bound on the intermediate solution]\label{lemma-regularity-hat}
Under the regularity assumption \eqref{eq-regularity}, we have, for $\tau\le t_n-t_0\le T$,
\begin{equation}\label{eq-bound-hat-noncfl}
\normbig{\widehat{\psi}_{I,n}^K}_{s+2} + \normbig{\widehat{v}_n^K}_{s+2} + \normbig{\widehat{w}_n^K}_{s+2} + \normbig{\widehat{\ph}_{n+1}^K}_{s+2} \le C.
\end{equation}
Assuming in addition the CFL condition \eqref{eq-cfl}, we also have 
\begin{equation}\label{eq-bound-hat-cfl}
\normbig{\widehat{\psi}_{P,n}^K}_{s+4} + \normvbig{\klabig{\widehat{u}^K_{n},\widehat{\dot{u}}^K_{n}}}_{s+2} \le C.
\end{equation}
\end{lemma}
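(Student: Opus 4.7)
The plan is to bound the auxiliary quantities in \eqref{eq-split-hat} one after the other, in the order in which they are defined, combining \eqref{eq-regularityK-high}, the algebra property \eqref{eq-algebra}, Lemma~\ref{lemma-inter}, Lemma~\ref{lemma-poisson}, Lemma~\ref{lemma-bound-R}, and elementary Fourier-multiplier bounds. The workhorses I will use throughout are: the pointwise bounds $\abs{\sinc(\tau\abs{j})}\le 1$ and $\abs{\sinc(\tau\abs{j})-1}\le 2$, so that $\sinc(\tau\Om)$, $\sinc(\tau\Om)-1$ and $\sinc(\tfrac12\tau\Om)^2$ act boundedly on every $H^r$; the isometry of $\e^{\iu\theta\Om^2}$ on $H^r$; and, since $s+2>\tfrac{d}{2}$, the composition estimate
\[
\norm{\e^{\iu\theta v}}_{s+2}+\norm{\phi(\iu\theta v)}_{s+2}\le C\klabig{\norm{\theta v}_{s+2}},
\]
which follows by termwise application of \eqref{eq-algebra} to the Taylor series of $\e$ and $\phi$. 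Since $\tau\le\tau_0$, the factors $\tau$ and $\tau^2$ that appear are uniformly bounded, so this estimate reduces to a uniform constant once the relevant $\widehat{v}_n^K$ or $\widehat{w}_n^K$ has been controlled in $H^{s+2}$.

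For \eqref{eq-bound-hat-noncfl} I would first treat $\widehat{\psi}_{I,n}^K$: the telescoped formula \eqref{eq-split-hat-psi}, the triangle inequality and $n\tau\le T$ give $\norm{\widehat{\psi}_{I,n}^K}_{s+2}\le\norm{\psi_0^K}_{s+2}+T\max_{1\le k\le n}\norm{\ph^K(t_k)}_{s+2}\le C$ by \eqref{eq-regularityK-high}. Feeding this into \eqref{eq-split-hat-vw} and using the sinc-multiplier bounds, Lemma~\ref{lemma-inter}, \eqref{eq-algebra} and the bounds on $u^K(t_n),\dot{u}^K(t_n),\ph^K(t_n)$ from \eqref{eq-regularityK-high} yields $\norm{\widehat{v}_n^K}_{s+2}+\norm{\widehat{w}_n^K}_{s+2}\le C$. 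Finally, $\widehat{\ph}_{n+1}^K$ is, by the first line of \eqref{eq-split-hat-1}, $\e^{-\iu\tau\Om^2}$ applied to an interpolated product; using the isometry of $\e^{-\iu\tau\Om^2}$, Lemma~\ref{lemma-inter}, the algebra property, the composition estimate above applied to $\e^{-\iu\tau\widehat{v}_n^K}$ and $\phi(-\iu\tau^2\widehat{w}_n^K)$, and the bounds already derived, this gives $\norm{\widehat{\ph}_{n+1}^K}_{s+2}\le C$.

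For \eqref{eq-bound-hat-cfl} the CFL condition \eqref{eq-cfl} enters through Lemma~\ref{lemma-poisson} applied at level $s+2$, which gains two spatial derivatives: the definition \eqref{eq-split-hat-psi} of $\widehat{\psi}_{P,n}^K$ together with Lemma~\ref{lemma-inter}, \eqref{eq-algebra}, the composition estimate and the part-one bounds then produce $\norm{\widehat{\psi}_{P,n}^K}_{s+4}\le C$. For $(\widehat{u}_{n+1}^K,\widehat{\dot{u}}_{n+1}^K)$ I would split the second line of \eqref{eq-split-hat-1} into the term $R(\tau)(u^K(t_n),\dot{u}^K(t_n))$, controlled by $(1+\tau)\cdot C$ via Lemma~\ref{lemma-bound-R} at level $s+2$ and \eqref{eq-regularityK-high}, and the term $(R(\tau)-1)\klabig{\Ic(\abs{\widehat{\psi}_{P,n}^K}^2),0}$. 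For the latter, inspecting the entries of \eqref{eq-R} yields directly $\normvbig{(R(\tau)-1)(w,0)}_{s+2}\le C\norm{w}_{s+3}$ (since $\cos(\tau\Om)-1$ is uniformly bounded and $-\Om\sin(\tau\Om)$ costs only one derivative), while Lemma~\ref{lemma-inter} with \eqref{eq-algebra} gives $\norm{\Ic(\abs{\widehat{\psi}_{P,n}^K}^2)}_{s+3}\le C\norm{\widehat{\psi}_{P,n}^K}_{s+4}^2\le C$.

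The main technical point I anticipate is the composition estimate for $\e^{\iu\theta v}$ and $\phi(\iu\theta v)$ in $H^{s+2}$ (a standard Sobolev-algebra argument that must be set up once and for all) and the careful chaining of it with the two-derivative gain of Lemma~\ref{lemma-poisson} to land in $H^{s+4}$ for $\widehat{\psi}_{P,n}^K$. Everything else is a mechanical sequence of triangle, algebra and multiplier estimates, and notably no induction on $n$ is needed, since each ``hat'' quantity depends only on $\ph^K,u^K,\dot{u}^K$ at the semi-discrete level, not on earlier numerical iterates.
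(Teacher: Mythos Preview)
Your proposal is correct and follows essentially the same approach as the paper: the paper's proof is extremely terse, first recording the composition estimates $\norm{\e^{v}}_{s'}\le \e^{C\norm{v}_{s'}}$ and $\norm{\phi(v)}_{s'}\le \e^{C\norm{v}_{s'}}$ (your ``composition estimate'') and then simply citing \eqref{eq-regularityK-high}, the algebra property \eqref{eq-algebra}, Lemma~\ref{lemma-inter}, and, for \eqref{eq-bound-hat-cfl}, Lemma~\ref{lemma-poisson} and Lemma~\ref{lemma-bound-R} at level $s+2$. Your write-up spells out the chain of estimates in the order the quantities are defined, which is exactly what the paper leaves implicit; the only cosmetic difference is that you bound $(R(\tau)-1)(w,0)$ by direct inspection of the entries of $R$, whereas the paper just invokes Lemma~\ref{lemma-bound-R}.
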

\begin{proof}
We first note that for $s'>\frac{d}2$
\begin{equation}\label{eq-exp}
\norm{\e^{v}}_{s'} \le \e^{C\norm{v}_{s'}}, \qquad 
\norm{\phi(v)}_{s'} \le \e^{C\norm{v}_{s'}}, \qquad v\in H^{s'}, 
\end{equation}
with $C$ depending on $d$ and $s'$, 
which follows by using the exponential series and the algebra property \eqref{eq-algebra}.
These properties together with the bounds \eqref{eq-regularityK-high} of the spatial semi-discretization, the algebra property \eqref{eq-algebra} and Lemma \ref{lemma-inter} imply the estimate~\eqref{eq-bound-hat-noncfl}. For the estimate \eqref{eq-bound-hat-cfl}, we use in addition Lemma \ref{lemma-poisson} on $(\Om^2\phi(\iu\tau\Om^2)+1)^{-1}$ and Lemma \ref{lemma-bound-R} on the matrix $R$ (with $s+2$ instead of $s$).
\end{proof}

\subsection{Stability estimates}\label{subsec-stability}

We study the error propagation of the Lie-Trotter splitting in the new variables \eqref{eq-split-new} by estimating the differences \eqref{eq-errors-stability}. Recall that the numerical method in the new variables is given by \eqref{eq-split-new} and that the intermediate solution appearing in these differences is given by \eqref{eq-split-hat}.

We denote for $n\ge 1$ by
\begin{equation}\label{eq-errormax}
e_n^K = \max_{j=1,\dots,n} \klaBig{ \normbig{\ph^K(t_j)-\ph_j^K}_s + \normvbig{\klabig{u^K(t_j),\dot{u}^K(t_j)} - \klabig{u_j^K , \dot{u}_j^K} }_{s} }
\end{equation}
the maximal error in $\ph$, $u$ and $\dot{u}$ of the discretization in time until time $t_n=t_0+n\tau$. For convenience, we set $e_0^K=0$ (recall that $\ph_0^K$ is not defined). 

In addition to the regularity assumption \eqref{eq-regularity} on the exact solution of Theorem \ref{thm-main}, we will assume in this section that the fully discrete numerical solution is bounded in the spaces in which the error bound of Theorem \ref{thm-main} is supposed to be shown:
\begin{equation}\label{eq-reg-numerical}
\normbig{\ph_n^K}_s + \normvbig{\klabig{u_n^K,\dot{u}_n^K}}_s \le 2 C M \myfor \tau\le t_n-t_0\le T
\end{equation}
with the constant $C$ of \eqref{eq-regularityK-high}. This assumption will be justified below in the final proof of Theorem \ref{thm-main}.
By the algebra property \eqref{eq-algebra} and Lemma \ref{lemma-inter}, this estimate implies in particular
\begin{equation}\label{eq-reg-numerical-vwpsiI}
\normbig{\psi_{I,n}^K}_s + \normbig{v_n^K}_s + \normbig{w_n^K}_s \le C \myfor \tau\le t_n-t_0\le T.
\end{equation}
Under the CFL condition \eqref{eq-cfl} and  using in addition Lemma \ref{lemma-poisson} and \eqref{eq-exp}, we also get
\begin{equation}\label{eq-reg-numerical-psiP}
\normbig{\psi_{P,n}^K}_{s+2} \le C \myfor \tau \le t_n-t_0\le T.
\end{equation}

\begin{lemma}[Stability in $\psi_I^K$]\label{lemma-stab-psiI}
We have, for $0\le t_n-t_0\le T$,
\[
\normbig{\widehat{\psi}_{I,n}^K - \psi_{I,n}^K}_s \le C e_n^K.
\]
\end{lemma}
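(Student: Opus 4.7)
The plan is essentially a one-line telescoping argument. Subtracting the two definitions in \eqref{eq-split-new-psi} and \eqref{eq-split-hat-psi}, the initial data $\psi_0^K$ cancels and I obtain the clean identity
\[
\widehat{\psi}_{I,n}^K - \psi_{I,n}^K = \tau \sum_{j=1}^{n} \klabig{\ph^K(t_j) - \ph_j^K}.
\]
The triangle inequality then gives $\|\widehat{\psi}_{I,n}^K - \psi_{I,n}^K\|_s \le \tau \sum_{j=1}^n \|\ph^K(t_j) - \ph_j^K\|_s$, and by the definition \eqref{eq-errormax} of $e_n^K$, each summand is bounded by $e_n^K$. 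Since $n\tau \le T$, I get $\tau n \cdot e_n^K \le T \cdot e_n^K$, and absorbing $T$ into the generic constant $C$ yields the stated estimate.

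There is no real obstacle here: unlike the other stability estimates that will follow (for $v_n^K$, $w_n^K$, $\psi_{P,n}^K$, and the main variables), $\widehat{\psi}_I$ and $\psi_I$ have an explicit quadrature representation in terms of $\ph$, with no nonlinear coupling. The only mild point is that the $n=0$ case holds trivially since $\widehat{\psi}_{I,0}^K = \psi_0^K = \psi_{I,0}^K$ by \eqref{eq-split-hat-start} and the convention $e_0^K = 0$, so the estimate extends to the full range $0 \le t_n - t_0 \le T$ as claimed.
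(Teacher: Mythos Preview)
Your proof is correct and is exactly the argument the paper has in mind; the paper's own proof is the single line ``This follows immediately from the definitions of $\psi_{I,n}^K$, $\widehat{\psi}_{I,n}^K$ and $e_n^K$,'' and you have simply written out the telescoping sum and the bound $n\tau\le T$ that this sentence encodes.
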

\begin{proof}
This follows immediately from the definitions of $\psi_{I,n}^K$ in \eqref{eq-split-new-psi}, $\widehat{\psi}_{I,n}^K$ in \eqref{eq-split-hat-psi} and $e_n^K$ in \eqref{eq-errormax}.
\end{proof}

\begin{lemma}[Stability in $v$ and $w$]\label{lemma-stab-vw}
Under the regularity assumptions \eqref{eq-regularity} and \eqref{eq-reg-numerical}, we have, for $\tau\le t_n-t_0\le T$,
\[
\normbig{\widehat{v}_{n}^K - v_{n}^K}_s \le C e_n^K \myand \normbig{\widehat{w}_{n}^K - w_{n}^K}_s \le C e_n^K.
\]
\end{lemma}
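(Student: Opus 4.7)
The plan is to write $\widehat{v}_n^K - v_n^K$ and $\widehat{w}_n^K - w_n^K$ as sums of three (respectively two) differences by subtracting the corresponding defining expressions \eqref{eq-split-new-vw} and \eqref{eq-split-hat-vw} term by term, and then to bound each piece by invoking (a) the uniform $H^s$-boundedness of the Fourier multipliers $\sinc(\tau\Om)$, $\sinc(\tfrac12\tau\Om)^2$ and $\sinc(\tau\Om)-1$ (whose symbols are all bounded by $2$ on $\mathbb{R}$ independently of $\tau$), (b) the algebra property \eqref{eq-algebra} for the quadratic expressions $|\psi_I^K|^2$ and $(\psi_I^K+\psi_I^K)\overline{\ph^K}$, (c) the already-established stability estimate for $\psi_I^K$ from Lemma \ref{lemma-stab-psiI}, and (d) the a priori bounds \eqref{eq-bound-hat-noncfl} and \eqref{eq-reg-numerical-vwpsiI} on the auxiliary variables.

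For the first estimate I would take the difference of the two formulas defining $v_n^K$ and $\widehat v_n^K$ to get
\[
\widehat v_n^K - v_n^K = \sinc(\tau\Om)\klabig{u^K(t_n)-u_n^K} - \sfrac12 \tau \sinc\klabig{\sfrac12\tau\Om}^2 \klabig{\dot u^K(t_n)-\dot u_n^K} + \klabig{\sinc(\tau\Om)-1}\Ic\klabig{\abs{\widehat\psi_{I,n-1}^K}^2 - \abs{\psi_{I,n-1}^K}^2}.
\]
The first two terms are bounded in $H^s$ by a constant times $\|u^K(t_n)-u_n^K\|_s$ and $\tau\|\dot u^K(t_n)-\dot u_n^K\|_s$ respectively, hence by $Ce_n^K$ using the definition \eqref{eq-errormax} and $\tau\le\tau_0$. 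For the third term I factor
\[
\absbig{\widehat\psi_{I,n-1}^K}^2 - \absbig{\psi_{I,n-1}^K}^2 = \klabig{\widehat\psi_{I,n-1}^K - \psi_{I,n-1}^K}\overline{\widehat\psi_{I,n-1}^K} + \psi_{I,n-1}^K\,\overline{\klabig{\widehat\psi_{I,n-1}^K - \psi_{I,n-1}^K}},
\]
apply Lemma \ref{lemma-inter}, the algebra property \eqref{eq-algebra}, and the $H^s$-bounds on $\widehat\psi_{I,n-1}^K$ and $\psi_{I,n-1}^K$ from \eqref{eq-bound-hat-noncfl} and \eqref{eq-reg-numerical-vwpsiI}, and finally invoke Lemma \ref{lemma-stab-psiI} to absorb the remaining factor by $Ce_{n-1}^K\le Ce_n^K$.

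The estimate for $w$ proceeds in exactly the same way. After subtracting, the $\dot u$-term is handled as above. For the quadratic remainder one writes
\[
\klabig{\widehat\psi_{I,n}^K+\widehat\psi_{I,n-1}^K}\overline{\ph^K(t_n)} - \klabig{\psi_{I,n}^K+\psi_{I,n-1}^K}\overline{\ph_n^K} = \klabig{\widehat\psi_{I,n}^K-\psi_{I,n}^K+\widehat\psi_{I,n-1}^K-\psi_{I,n-1}^K}\overline{\ph^K(t_n)} + \klabig{\psi_{I,n}^K+\psi_{I,n-1}^K}\overline{\klabig{\ph^K(t_n)-\ph_n^K}};
\]
the first summand is controlled by Lemma \ref{lemma-stab-psiI} together with the $H^s$-bound on $\ph^K(t_n)$ from \eqref{eq-regularityK-high}, while the second uses the definition of $e_n^K$ together with the $H^s$-bound on $\psi_{I,n}^K,\psi_{I,n-1}^K$ from \eqref{eq-reg-numerical-vwpsiI}. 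Taking the $\Ic\klabig{\ReT\klabig{\cdot}}$ in front and using Lemma \ref{lemma-inter} together with the uniform $H^s$-boundedness of $\sinc(\tau\Om)-1$ yields the desired bound.

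The main obstacle here is essentially bookkeeping rather than analysis: the argument is routine once one notes that every Fourier multiplier appearing in \eqref{eq-split-new-vw} and \eqref{eq-split-hat-vw} has symbol uniformly bounded in $\tau$, so that the regularity only needs to be tracked in the nonlinear factors, which is exactly what the a priori bounds \eqref{eq-bound-hat-noncfl} and \eqref{eq-reg-numerical-vwpsiI} together with Lemma \ref{lemma-stab-psiI} provide. Note in particular that no CFL condition is needed for this lemma, since the differences live at the level of $H^s$ regularity where the algebra property applies directly.
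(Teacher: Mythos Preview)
Your proposal is correct and follows essentially the same approach as the paper: subtract the defining formulas \eqref{eq-split-new-vw} and \eqref{eq-split-hat-vw}, control the linear pieces directly via the definition of $e_n^K$, and handle the quadratic pieces with the algebra property \eqref{eq-algebra}, Lemma~\ref{lemma-inter}, the stability of $\psi_I^K$ from Lemma~\ref{lemma-stab-psiI}, and the a~priori bounds \eqref{eq-bound-hat-noncfl}, \eqref{eq-reg-numerical-vwpsiI}, \eqref{eq-regularityK-high}. The only cosmetic difference is that the paper factors $\abs{a}^2-\abs{b}^2=\ReT\bigl((a-b)(\overline{a}+\overline{b})\bigr)$ in one stroke, whereas you split it into two summands; and for $w$ the paper simply remarks that one additionally uses the bounds on $\ph^K(t_n)$ and $\ph_n^K$ from \eqref{eq-regularityK-high} and \eqref{eq-reg-numerical}, which is exactly what your explicit decomposition makes visible.
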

\begin{proof}
Recall that $v_n^K$ and $\widehat{v}_n^K$ are defined in \eqref{eq-split-new-vw} and \eqref{eq-split-hat-vw}. 
For the estimate of $v$, we use that (omitting all superscripts $K$)
\[
\normbig{\Ic \klabig{\abs{\widehat{\psi}_{I,n-1}}^2 - \abs{\psi_{I,n-1}}^2} }_s 
 \le C \normbig{\widehat{\psi}_{I,n-1}-\psi_{I,n-1}}_s \normbig{\psi_{I,n-1}+\widehat{\psi}_{I,n-1}}_s
\]
by $\abs{a}^2-\abs{b}^2 = \ReT((a-b)(\overline{a}+\overline{b}))$, by Lemma \ref{lemma-inter} and by the algebra property \eqref{eq-algebra}. This implies the stated estimate of $\norm{\widehat{v}_{n}^K - v_{n}^K}_s$ using the stability estimate for $\psi_I^K$ of Lemma \ref{lemma-stab-psiI} and the bounds \eqref{eq-bound-hat-noncfl} and \eqref{eq-reg-numerical-vwpsiI} on $\widehat{\psi}_{I,n-1}^K$ and $\psi_{I,n-1}^K$.
For the estimate of $w$, we use in addition the bounds \eqref{eq-regularityK-high} and \eqref{eq-reg-numerical} on $\ph^K(t_n)$ and $\ph_n^K$, respectively.
\end{proof}

\begin{lemma}[Stability in $\psi_P^K$]\label{lemma-stab-psiP}
Under the regularity assumptions \eqref{eq-regularity} and \eqref{eq-reg-numerical} and under the CFL condition \eqref{eq-cfl}, we have, for $0\le t_n-t_0\le T$,
\[
\normbig{\widehat{\psi}_{P,n}^K - \psi_{P,n}^K}_{s+2} \le C e_n^K.
\]
\end{lemma}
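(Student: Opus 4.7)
The approach is to exploit the common structure of $\psi_{P,n}^K$ in \eqref{eq-split-new-psi} and $\widehat{\psi}_{P,n}^K$ in \eqref{eq-split-hat-psi}: both have the form $\klabig{\Om^2\phi\klabig{\iu\tau\Om^2}+1}^{-1}\,\Ic(\cdots)$, differing only inside the bracket. Subtracting and applying Lemma \ref{lemma-poisson}(a) (whose hypothesis is exactly the CFL condition \eqref{eq-cfl}) together with Lemma \ref{lemma-inter}, we gain two spatial derivatives. It therefore suffices to prove that the difference $D_n$ of the two interior expressions satisfies $\norm{D_n}_s \le C e_n^K$.

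Splitting the interior difference yields
\[
D_n = \iu\bigl(\ph^K(t_n)-\ph_n^K\bigr) + \bigl(\widehat{\psi}_{I,n}^K-\psi_{I,n}^K\bigr) - \bigl(\widehat{Q}_n - Q_n\bigr),
\]
where $Q_n = v_n^K\,\phi(\iu\tau v_n^K)\,\e^{\iu\tau\Om^2}\psi_{I,n}^K$ and $\widehat{Q}_n$ is the analogous expression with hats. The first summand is at most $e_n^K$ in $H^s$ by the very definition \eqref{eq-errormax}, and the second is controlled by Lemma \ref{lemma-stab-psiI}. For the last summand I telescope:
\[
\widehat{Q}_n - Q_n = (\widehat{v}_n^K - v_n^K)\,\phi(\iu\tau\widehat{v}_n^K)\,\e^{\iu\tau\Om^2}\widehat{\psi}_{I,n}^K + v_n^K\bigl(\phi(\iu\tau\widehat{v}_n^K)-\phi(\iu\tau v_n^K)\bigr)\,\e^{\iu\tau\Om^2}\widehat{\psi}_{I,n}^K + v_n^K\,\phi(\iu\tau v_n^K)\,\e^{\iu\tau\Om^2}\bigl(\widehat{\psi}_{I,n}^K-\psi_{I,n}^K\bigr).
\]
Since $\e^{\iu\tau\Om^2}$ is an $H^s$-isometry, repeated use of the algebra property \eqref{eq-algebra} reduces each term to a product of bounded factors and one small factor. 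The uniform bounds \eqref{eq-bound-hat-noncfl} on $\widehat{v}_n^K$, $\widehat{\psi}_{I,n}^K$ and \eqref{eq-reg-numerical-vwpsiI} on $v_n^K$, $\psi_{I,n}^K$ take care of the bounded factors, while the small factors $\widehat{v}_n^K - v_n^K$ and $\widehat{\psi}_{I,n}^K-\psi_{I,n}^K$ are controlled by Lemmas \ref{lemma-stab-vw} and \ref{lemma-stab-psiI}, respectively.

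The only estimate that is not a direct application of an earlier lemma is the Lipschitz-type bound $\normbig{\phi(\iu\tau\widehat{v}_n^K)-\phi(\iu\tau v_n^K)}_s \le C\,\normbig{\widehat{v}_n^K - v_n^K}_s$. This should follow exactly as in \eqref{eq-exp}: from the power series $\phi(\xi)=\sum_{k\ge 0}\xi^k/(k+1)!$ one writes $\phi(a)-\phi(b) = (a-b)\sum_{k\ge 1}\frac{1}{(k+1)!}\sum_{j=0}^{k-1} a^{k-1-j} b^j$, and the inner sums are estimated in $H^s$ via the algebra property, giving a constant depending only on the uniform $H^s$-bounds on $v_n^K$ and $\widehat{v}_n^K$. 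This is the main, though still routine, technical point of the proof; everything else is bookkeeping. Combining the three bounds on $D_n$ yields $\norm{D_n}_s \le C e_n^K$, and the claim follows.
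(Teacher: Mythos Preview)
Your proposal is correct and follows the same line as the paper: apply Lemma~\ref{lemma-poisson} (under the CFL condition) together with Lemma~\ref{lemma-inter} to reduce to an $H^s$-estimate of the interior difference, then control the pieces via Lemmas~\ref{lemma-stab-psiI} and~\ref{lemma-stab-vw} and the bounds \eqref{eq-bound-hat-noncfl}, \eqref{eq-reg-numerical-vwpsiI}. The only tactical difference is in the handling of $\widehat{Q}_n-Q_n$: you telescope into three terms and supply a separate Lipschitz bound for $\phi$ from its power series, whereas the paper uses $a\,\phi(\iu\tau a)=(\e^{\iu\tau a}-1)/(\iu\tau)$ together with the identity $\e^{\xi}-\e^{\eta}=(\xi-\eta)\phi(\xi-\eta)\e^{\eta}$ (noted in the Notation section) to obtain a two-term split
\[
\widehat{Q}_n-Q_n = (\widehat{v}_n-v_n)\,\phi\bigl(\iu\tau(\widehat{v}_n-v_n)\bigr)\,\e^{\iu\tau v_n}\,\e^{\iu\tau\Om^2}\widehat{\psi}_{I,n} + v_n\,\phi(\iu\tau v_n)\,\e^{\iu\tau\Om^2}\bigl(\widehat{\psi}_{I,n}-\psi_{I,n}\bigr),
\]
so that only the already available bounds \eqref{eq-exp} are needed and no extra Lipschitz estimate for $\phi$ has to be derived.
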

\begin{proof}
From Lemma \ref{lemma-inter} and from the bound of Lemma \ref{lemma-poisson} on the inverse of the discrete Laplace operator in \eqref{eq-split-new-psi} and \eqref{eq-split-hat-psi}, we get (omitting all superscripts $K$)
\begin{align*}
\normbig{\widehat{\psi}_{P,n} - \psi_{P,n}}_{s+2} &\le C \norm{\ph(t_n)-\ph_n}_s + C \normbig{\widehat{\psi}_{I,n}-\psi_{I,n}}_s\\
 &\qquad\qquad + C \normbig{ \kla{\widehat{v}_n-v_n} \phi\klabig{\iu\tau(\widehat{v}_n-v_n)} \e^{\iu \tau v_n} \e^{\iu\tau\Om^2} \widehat{\psi}_{I,n} }_s\\
 &\qquad\qquad + C \normbig{ v_n \phi\klabig{\iu\tau v_n} \e^{\iu\tau\Om^2} \klabig{\widehat{\psi}_{I,n}-\psi_{I,n}} }_s.
\end{align*}
The stated stability estimate then follows from the properties \eqref{eq-exp}, the bounds \eqref{eq-bound-hat-noncfl} and \eqref{eq-reg-numerical-vwpsiI} on $\widehat{\psi}_{I,n}^K$, $\widehat{v}_n^K$ and $v_n^K$, the stability estimates for $\psi_I^K$ and $v$ of Lemmas \ref{lemma-stab-psiI} and~\ref{lemma-stab-vw} and the algebra property \eqref{eq-algebra}.
\end{proof}

The stability in the auxiliary variables $\psi_I^K$, $v$, $w$ and $\psi_P^K$ of Lemmas \ref{lemma-stab-psiI}--\ref{lemma-stab-psiP} can then be used to show the following stability properties in the main variables $\ph$, $u$ and $\dot{u}$ of \eqref{eq-split-new}. Note that in these estimates, there is no constant in front of the principal part on the right-hand side anymore.

\begin{proposition}[Stability in $\ph$]\label{prop-stab-ph}
Under the regularity assumptions \eqref{eq-regularity} and \eqref{eq-reg-numerical}, we have, for $\tau\le t_n-t_0\le T$,
\[
\normbig{ \widehat{\ph}_{n+1}^K - \ph_{n+1}^K }_{s} \le \normbig{\ph^K(t_n) - \ph_n^K}_{s} + C \tau e_n^K.
\]
\end{proposition}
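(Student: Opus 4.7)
The plan is to subtract \eqref{eq-split-new-1} from \eqref{eq-split-hat-1}, exploit that $\e^{-\iu\tau\Om^2}$ is an $H^s$-isometry (its Fourier symbol has modulus $1$), and then split the resulting $\Ic[\,\cdot\,]$-expression into a principal term with coefficient exactly $1$ and remainders of size $\mathcal{O}(\tau\,e_n^K)$. First I would isolate the contribution of the factor $\e^{-\iu\tau\widehat{v}_n^K}-\e^{-\iu\tau v_n^K}$: using the identity $\e^\xi-\e^\eta=(\xi-\eta)\phi(\xi-\eta)\e^\eta$ from \eqref{eq-phifunc} together with Lemma \ref{lemma-stab-vw} and the uniform control of the remaining factors (Lemma \ref{lemma-regularity-hat}, the algebra property \eqref{eq-algebra}, Lemma \ref{lemma-inter}, and \eqref{eq-exp}), this contribution is bounded by $C\tau\,e_n^K$. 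Similarly, the difference $\widehat{w}_n^K\phi(-\iu\tau^2\widehat{w}_n^K)\widehat{\psi}_{I,n}^K - w_n^K\phi(-\iu\tau^2 w_n^K)\psi_{I,n}^K$ comes with an explicit prefactor $\tau$ in the formula for $\ph_{n+1}^K$; telescoping it into differences in $\widehat{w}_n^K-w_n^K$, $\widehat{\psi}_{I,n}^K-\psi_{I,n}^K$ and the two values of $\phi$ (again via $\phi(\xi)-\phi(\eta)=\ldots$), and applying Lemmas \ref{lemma-stab-psiI}--\ref{lemma-stab-vw} together with the regularity bounds of Section \ref{subsec-full-bounds}, \eqref{eq-reg-numerical-vwpsiI} and \eqref{eq-exp}, yields another contribution of size $C\tau\,e_n^K$.

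After these two reductions the principal term is $\e^{-\iu\tau\Om^2}\Ic\klabig{\e^{-\iu\tau v_n^K}(\ph^K(t_n)-\ph_n^K)}$. The crucial observation is that $\ph^K(t_n)-\ph_n^K$ is itself a trigonometric polynomial of degree $K$, so $\Ic$ acts as the identity on it. Writing $\e^{-\iu\tau v_n^K}=1+(-\iu\tau v_n^K)\phi(-\iu\tau v_n^K)$ with $\phi$ from \eqref{eq-phifunc}, I would split
\[
\Ic\klabig{\e^{-\iu\tau v_n^K}(\ph^K(t_n)-\ph_n^K)} = \klabig{\ph^K(t_n)-\ph_n^K} + \Ic\klabig{(-\iu\tau v_n^K)\phi(-\iu\tau v_n^K)(\ph^K(t_n)-\ph_n^K)},
\]
and bound the second summand by $C\tau\,\norm{\ph^K(t_n)-\ph_n^K}_s\le C\tau\,e_n^K$ using \eqref{eq-algebra}, Lemma \ref{lemma-inter}, \eqref{eq-reg-numerical-vwpsiI} and \eqref{eq-exp}. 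Since $\e^{-\iu\tau\Om^2}$ preserves $\norm{\cdot}_s$ exactly, the first summand contributes exactly $\norm{\ph^K(t_n)-\ph_n^K}_s$, and the sum of the three contributions yields the claim.

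The main obstacle I anticipate is to obtain the coefficient $1$ (rather than $1+C\tau$) in front of $\norm{\ph^K(t_n)-\ph_n^K}_s$, which is precisely what allows the subsequent inductive bound on $e_n^K$ to produce a $\tau$-independent constant. This is secured by the combination of two facts: the Fourier multiplier $\e^{-\iu\tau\Om^2}$ is an exact isometry on $H^s$, and $\Ic$ is the identity on trigonometric polynomials of degree $K$. Consequently, the algebra and interpolation constants $C$ only appear in front of quantities that already carry an explicit factor of $\tau$, so that no multiplicative constant contaminates the principal part.
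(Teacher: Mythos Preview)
Your proposal is correct and follows essentially the same route as the paper's proof: the paper also splits via $\e^{-\iu\tau v}=1-\iu\tau v\,\phi(-\iu\tau v)$ to isolate the principal term with coefficient exactly~$1$, and handles the remaining differences by the same telescoping together with Lemmas~\ref{lemma-stab-psiI}--\ref{lemma-stab-vw}, \eqref{eq-exp}, and the algebra property. The only cosmetic difference is that the paper factors $\e^{-\iu\tau\widehat{v}_n^K}$ (rather than $\e^{-\iu\tau v_n^K}$) around $\ph^K(t_n)-\ph_n^K$, and it leaves implicit the point you spell out, namely that $\Ic$ acts as the identity on trigonometric polynomials of degree~$K$ so that no interpolation constant contaminates the leading term.
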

\begin{proof}
Recall that $\ph_{n+1}^K$ and $\widehat{\ph}_{n+1}^K$ are defined in \eqref{eq-split-new-1} and \eqref{eq-split-hat-1}. 
Writing (omitting all superscripts $K$)
\[
\e^{-\iu\tau\widehat{v}_n} \ph(t_n)- \e^{-\iu\tau v_n} \ph_n = \klabig{1-\iu\tau\widehat{v}_n\phi(-\iu\tau\widehat{v}_n)} (\ph(t_n)-\ph_n) + \klabig{\e^{-\iu\tau\widehat{v}_n} - \e^{-\iu\tau v_n}} \ph_n,
\]
we get (omitting again all superscripts $K$)
\begin{align*}
\norm{ \widehat{\ph}_{n+1} - \ph_{n+1} }_{s} &\le \norm{ \ph(t_n)-\ph_n }_s + \tau \normbig{ \widehat{v}_n \phi\klabig{-\iu\tau \widehat{v}_n} \klabig{\ph(t_n)-\ph_n} }_s\\
 &\qquad + \tau \normbig{ (\widehat{v}_n-v_n) \phi\klabig{-\iu\tau (\widehat{v}_n-v_n)} \e^{-\iu\tau v_n} \ph_n }_s \\
 &\qquad + \tau^2 \normbig{(\widehat{v}_n-v_n) \phi\klabig{-\iu\tau(\widehat{v}_n-v_n)} \e^{-\iu\tau v_n}  \widehat{w}_n \phi\klabig{-\iu\tau^2 \widehat{w}_n} \widehat{\psi}_{I,n}}_s\\
 &\qquad + \tau \normbig{ \e^{-\iu\tau v_n} (\widehat{w}_{n}-w_n) \phi\klabig{-\iu\tau^2 (\widehat{w}_{n}-w_n)} \e^{-\iu \tau^2 w_n} \widehat{\psi}_{I,n}}_s\\
 &\qquad + \tau \normbig{ \e^{-\iu\tau v_n} w_n \phi\klabig{-\iu\tau^2 w_n} \klabig{\widehat{\psi}_{I,n}-\psi_{I,n}} }_s
\end{align*}
from Lemma \ref{lemma-inter}.
The stated stability estimate then follows as in the proof of the Lemma \ref{lemma-stab-psiP}, using in addition the bounds \eqref{eq-bound-hat-noncfl} and \eqref{eq-reg-numerical-vwpsiI} on $\widehat{w}_n^K$ and $w_n^K$ and the stability estimate for $w$ of Lemma \ref{lemma-stab-vw}.
\end{proof}

\begin{proposition}[Stability in $u$ and $\dot{u}$]\label{prop-stab-u}
Under the regularity assumptions \eqref{eq-regularity} and \eqref{eq-reg-numerical} and under the CFL condition \eqref{eq-cfl}, we have, for $0\le t_n-t_0\le T$,
\[
\normvbig{\klabig{\widehat{u}_{n+1}^K,\widehat{\dot{u}}_{n+1}^K} - (u_{n+1}^K,\dot{u}_{n+1}^K)}_{s} \le \normvbig{\klabig{u^K(t_{n}),\dot{u}^K(t_{n})} - (u_{n}^K,\dot{u}_{n}^K)}_{s} + C \tau e_n^K.
\]
\end{proposition}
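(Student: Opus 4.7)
The plan is to subtract the two recursions from~\eqref{eq-split-new-1} and~\eqref{eq-split-hat-1}. Because both share the matrix $R(\tau)$ in front of the linear part and the same structure $(R(\tau)-1)(\cdot,0)^{\!\top}$ in front of the nonlinearity, the difference reads
\[
\begin{pmatrix} \widehat{u}_{n+1}^K - u_{n+1}^K \\ \widehat{\dot{u}}_{n+1}^K - \dot{u}_{n+1}^K \end{pmatrix}
 = R(\tau) \begin{pmatrix} u^K(t_n) - u_n^K \\ \dot{u}^K(t_n) - \dot{u}_n^K \end{pmatrix}
  + \klabig{R(\tau)-1}\begin{pmatrix} \Ic \klabig{\abs{\widehat{\psi}_{P,n}^K}^2 - \abs{\psi_{P,n}^K}^2} \\ 0 \end{pmatrix}.
\]
I estimate the two terms on the right separately. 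For the first term, Lemma~\ref{lemma-bound-R} applied with $t=\tau$ yields the bound $(1+\tau)\normv{(u^K(t_n)-u_n^K,\dot{u}^K(t_n)-\dot{u}_n^K)}_s$; the leading $1$ produces exactly the first term on the right-hand side of the claim, while the remaining $\tau$ is absorbed into $C\tau e_n^K$ via the definition~\eqref{eq-errormax} of $e_n^K$.

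For the second term, the key point is to extract a factor $\tau$ from $R(\tau)-1$. Since this matrix is applied to a vector whose second component vanishes, only its first column matters, with Fourier multipliers $\cos(\tau\abs{j})-1$ (weighted by $\max(\abs{j},1)^{s+1}$ in the target norm $\normv{\cdot}_s$) and $-\abs{j}\sin(\tau\abs{j})$ (weighted by $\max(\abs{j},1)^{s}$). Using $\abs{\cos(\xi)-1}\le\tfrac12\xi^2$, $\abs{\sin(\xi)}\le\abs{\xi}$ and the CFL bound $\tau\abs{j}^2\le c/d$ for $j\in\mathcal{K}$, both multipliers acquire a factor $\tau$ at the price of two extra spatial derivatives, yielding
\[
\normvBig{\klabig{R(\tau)-1}\klabig{\begin{smallmatrix} g \\ 0 \end{smallmatrix}}}_s \le C\tau \norm{g}_{s+2}
\]
for every trigonometric polynomial $g$ of degree $K$.

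To close the argument, the identity $\abs{a}^2-\abs{b}^2=\ReT((a-b)(\overline{a}+\overline{b}))$, the algebra property~\eqref{eq-algebra}, the bounds~\eqref{eq-bound-hat-cfl} and~\eqref{eq-reg-numerical-psiP} on $\widehat{\psi}_{P,n}^K$ and $\psi_{P,n}^K$ in $H^{s+2}$, Lemma~\ref{lemma-inter}, and the stability estimate of Lemma~\ref{lemma-stab-psiP} together yield
\[
\normbig{\Ic\klabig{\abs{\widehat{\psi}_{P,n}^K}^2-\abs{\psi_{P,n}^K}^2}}_{s+2}\le C e_n^K,
\]
so the nonlinear contribution is at most $C\tau e_n^K$. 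Summing the two contributions gives the claim. The main obstacle is the Fourier-multiplier estimate on $R(\tau)-1$: this is the only step that genuinely uses the CFL condition~\eqref{eq-cfl}, which is needed to compensate the derivative loss coming from the $\Om\sin(\tau\Om)$ entry in the lower-left corner of $R(\tau)-1$.
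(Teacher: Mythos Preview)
Your proof is correct and follows the same decomposition and the same chain of lemmas as the paper. One remark on your closing comment, however: the bound on $R(\tau)-1$ does \emph{not} require the CFL condition. The paper uses the cruder inequalities $\abs{\cos(\xi)-1}\le\abs{\xi}$ and $\abs{\xi\sin(\xi)}\le\xi^{2}$ (with $\xi=\tau\abs{j}$), which already give
\[
\normvbig{(R(\tau)-1)\bigl(\begin{smallmatrix} g\\0 \end{smallmatrix}\bigr)}_s \le C\tau\,\norm{g}_{s+2}
\]
for all functions, without any restriction on $\tau\abs{j}^{2}$. Your sharper bound $\abs{\cos(\xi)-1}\le\tfrac12\xi^{2}$ followed by the CFL condition works too, but it is an unnecessary detour. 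The genuine uses of the CFL condition in this proposition are hidden in the ingredients you invoke: Lemma~\ref{lemma-stab-psiP} (through Lemma~\ref{lemma-poisson}) and the bounds \eqref{eq-bound-hat-cfl} and \eqref{eq-reg-numerical-psiP} on $\widehat{\psi}_{P,n}^K$ and $\psi_{P,n}^K$ in $H^{s+2}$, all of which rely on the invertibility and mapping properties of $\Om^{2}\phi(\iu\tau\Om^{2})+1$.
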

\begin{proof}
We start with some properties of the matrix $R$ of \eqref{eq-R} that describes the time-discrete evolution in $u$ and $\dot{u}$, see \eqref{eq-split-new-1} and \eqref{eq-split-hat-1}. 
The first property is 
\[
\normvbig{ R(\tau) \klabig{\begin{smallmatrix} v\\ \dot{v} \end{smallmatrix}} }_s \le (1+\tau) \normvbig{ \klabig{\begin{smallmatrix} v\\ \dot{v} \end{smallmatrix}} }_s
\]
of Lemma \ref{lemma-bound-R}.
The second property that we need is
\begin{equation}\label{eq-bound-R-1}
\normvbig{ \klabig{ R(\tau)-1} \klabig{\begin{smallmatrix} v\\ \dot{v} \end{smallmatrix}} }_s \le C \tau \normvbig{ \klabig{\begin{smallmatrix} v\\ \dot{v} \end{smallmatrix}} }_{s+1}.
\end{equation}
This follows from $\abs{\cos(\xi)-1} \le \abs{\xi}$, $\abs{\tau\sinc(\xi)}\le \tau$ and $\abs{\tau^{-1} \xi \sin(\xi)} \le \tau^{-1} \abs{\xi}^2$ for $\xi\in\mathbb{R}$. With these properties of $R$, the claimed stability estimate follows from Lemma \ref{lemma-stab-psiP} on the stability in $\psi_P^K$ and from the bounds \eqref{eq-bound-hat-cfl} and \eqref{eq-reg-numerical-psiP} on $\widehat{\psi}_{P,n}^K$ and $\psi_{P,n}^K$, respectively.
\end{proof}

\subsection{Local error bounds}\label{subsec-localerror}

We study the local errors \eqref{eq-errors-local} of the scheme \eqref{eq-split-new}. Recall that the intermediate solution appearing in these differences is given by \eqref{eq-split-hat}.
For the exact solution of the spatial semi-discretization in the new variables \eqref{eq-zakK-new} we will use that, by the variation-of-constants formula,
\begin{equation}\label{eq-voc}\begin{split}
\ph^K(t_{n+1}) &= \e^{-\iu\tau\Om^2}\ph^K(t_n) - \iu \int_0^\tau \e^{-\iu(\tau-t)\Om^2} \Ic\klabig{ u^K(t_n+t) \ph^K(t_n+t) } \,\drm t\\ 
 & \qquad\qquad\qquad\quad\; - \iu \int_0^\tau \e^{-\iu(\tau-t)\Om^2} \Ic\klabig{ \dot{u}^K(t_n+t) \psi_I^K(t_n+t) } \,\drm t\\
\begin{pmatrix} u^K(t_{n+1}) \\ \dot{u}^K(t_{n+1}) \end{pmatrix} &= R(\tau) \begin{pmatrix} u^K(t_{n})\\ \dot{u}^K(t_{n}) \end{pmatrix} 
- \int_0^\tau R(\tau-t)  \begin{pmatrix} 0 \\ \Om^2 \Ic\klabig{ \abs{\psi_P^K(t_n+t)}^2 } \end{pmatrix} \, \drm t,
\end{split}\end{equation}
where $\psi_I^K$ and $\psi_P^K$ are as in \eqref{eq-zakK-new-2}.

As in the previous section on the stability of the method, we first study the local error in the auxiliary variables $\psi_I^K$, $v^K$, $w^K$ and $\psi_P^K$.

\begin{lemma}[Local error in $\psi_I^K$]\label{lemma-local-psiI}
Under the regularity assumption \eqref{eq-regularity}, we have, for $0\le t_n-t_0\le T$,
\[
\normbig{\psi_I^K(t_n) - \widehat{\psi}_{I,n}^K}_s \le C \tau.
\]
\end{lemma}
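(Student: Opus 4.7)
The plan is to recognize the difference $\psi_I^K(t_n) - \widehat{\psi}_{I,n}^K$ as the quadrature error of a right-endpoint rectangle rule and to bound it by a standard Taylor expansion argument.

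First, from the definition \eqref{eq-zakK-new-2} of $\psi_I^K$ and the definition \eqref{eq-split-hat-psi} of $\widehat{\psi}_{I,n}^K$ (together with $\widehat{\psi}_{I,0}^K=\psi_0^K=\psi^K(t_0)$) I would write
\[
\psi_I^K(t_n) - \widehat{\psi}_{I,n}^K = \sum_{j=1}^n \klabigg{ \int_{t_{j-1}}^{t_j} \ph^K(t')\,\drm t' - \tau\, \ph^K(t_j) } = -\sum_{j=1}^n \int_{t_{j-1}}^{t_j} \int_{t'}^{t_j} \partial_t \ph^K(s)\,\drm s\,\drm t',
\]
where the second equality uses the fundamental theorem of calculus $\ph^K(t')-\ph^K(t_j) = -\int_{t'}^{t_j}\partial_t\ph^K(s)\,\drm s$.

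The main step is then to bound $\partial_t \ph^K$ in $H^s$ uniformly on $[t_0,t_0+T]$. From the equation for $\ph^K$ in \eqref{eq-zakK-new-1},
\[
\partial_t \ph^K = \iu \Delta \ph^K - \iu \Ic\klabig{u^K\ph^K + \dot{u}^K \psi_I^K},
\]
and using the bounds \eqref{eq-regularityK-high} on $\ph^K$, $u^K$, $\dot{u}^K$ and $\psi^K=\psi_I^K$ together with Lemma~\ref{lemma-inter} and the algebra property \eqref{eq-algebra}, one obtains $\norm{\partial_t \ph^K(s)}_s \le C$ for $t_0\le s\le t_0+T$. (Note in particular that $\norm{\Delta \ph^K}_s \le \norm{\ph^K}_{s+2}\le C$, which is where the extra two spatial derivatives on $\ph^K$ provided by \eqref{eq-regularityK-high} are used.)

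Plugging this uniform bound into the double-integral representation above and taking $H^s$ norms gives
\[
\normbig{\psi_I^K(t_n) - \widehat{\psi}_{I,n}^K}_s \le \sum_{j=1}^n \int_{t_{j-1}}^{t_j}\int_{t'}^{t_j} C\,\drm s\,\drm t' \le C n \tau^2 \le C T \tau,
\]
which is the claimed estimate. The only non-routine ingredient is the $H^s$-bound on $\partial_t\ph^K$, and this is already supplied by the regularity estimate \eqref{eq-regularityK-high} for the spatial semi-discretization; no use of the CFL condition or of Lemma~\ref{lemma-poisson} is required here.
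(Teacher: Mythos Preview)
Your proof is correct and follows essentially the same approach as the paper: both recognize the difference as a sum of right-endpoint quadrature errors and bound each by $C\tau^2$ via $\norm{\partial_t\ph^K}_s\le C$, which is obtained from \eqref{eq-zakK-new-1}, \eqref{eq-regularityK-high}, Lemma~\ref{lemma-inter}, and the algebra property. Your version is slightly more explicit (writing out the double integral), but the argument is the same.
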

\begin{proof}
For $\psi_I^K$ from \eqref{eq-zakK-new-2} and $\widehat{\psi}_{I,n}^K$ from \eqref{eq-split-hat-psi}, we have
\[
\psi_I^K(t_n) - \widehat{\psi}_{I,n}^K = \sum_{j=1}^{n} \klabigg{ \int_{t_{j-1}}^{t_j} \ph^K(t)\,\drm t - \tau \ph^K(t_j) }.
\]
These are quadrature errors. Under the regularity assumption \eqref{eq-regularity} on the exact solution, we have $\norm{\partial_t \ph^K(t)}_s \le C$ by \eqref{eq-algebra}, \eqref{eq-zakK-new-1} and \eqref{eq-regularityK-high}, and hence the individual quadrature errors can be estimated in the norm $\norm{\cdot}_s$ by $C \tau^2$. 
\end{proof}

\begin{lemma}[Local errors in $v^K$ and $w^K$]\label{lemma-local-vw}
Under the regularity assumption \eqref{eq-regularity}, we have, for $\tau\le t_n-t_0\le T$,
\[
\normbig{u^K(t_n) - \widehat{v}_n^K}_s \le C \tau \myand
\normbig{\dot{u}^K(t_n) - \widehat{w}_n^K}_s \le C \tau.
\]
\end{lemma}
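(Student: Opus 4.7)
The plan is to expand both differences directly from the definitions in \eqref{eq-split-hat-vw} and to estimate each resulting piece by combining an elementary symbol bound for the sinc operator with the regularity information already available. Writing out the telescopings one finds
\[
u^K(t_n) - \widehat{v}_n^K = (1-\sinc(\tau\Om))u^K(t_n) + \sfrac{1}{2}\tau \sinc\klabig{\sfrac{1}{2}\tau\Om}^2 \dot{u}^K(t_n) + (1-\sinc(\tau\Om))\Ic\klabig{\absbig{\widehat{\psi}_{I,n-1}^K}^2},
\]
\[
\dot{u}^K(t_n) - \widehat{w}_n^K = \klabig{1-\sinc\klabig{\sfrac{1}{2}\tau\Om}^2}\dot{u}^K(t_n) + (1-\sinc(\tau\Om))\Ic\klaBig{\ReT\klabig{\klabig{\widehat{\psi}_{I,n}^K+\widehat{\psi}_{I,n-1}^K}\overline{\ph^K}(t_n)}}.
\]

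The key auxiliary ingredient is the elementary scalar estimate $\abs{1-\sinc(\xi)} \le C\abs{\xi}$ for all $\xi\in\mathbb{R}$, which is immediate from Taylor expansion for $\abs{\xi}\le 1$ and from $\abs{\sinc(\xi)}\le 1/\abs{\xi}$ for $\abs{\xi}\ge 1$. Combined with the factorization $1-\sinc(\xi/2)^2 = (1-\sinc(\xi/2))(1+\sinc(\xi/2))$ and $\abs{\sinc}\le 1$, this yields $\absbig{1-\sinc(\xi/2)^2}\le C\abs{\xi}$ as well. At the Fourier-multiplier level, both estimates give
\[
\normbig{(1-\sinc(\tau\Om))v}_s \le C\tau\norm{v}_{s+1} \myand \normbig{\klabig{1-\sinc(\sfrac{1}{2}\tau\Om)^2}v}_s \le C\tau\norm{v}_{s+1},
\]
so that each sinc-difference trades one spatial derivative for a factor of $\tau$.

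Applying these multiplier bounds, the first and third terms of the $\widehat{v}$-expansion and both terms of the $\widehat{w}$-expansion reduce to $C\tau$ times the $H^{s+1}$-norm of either $u^K(t_n)$, $\dot{u}^K(t_n)$, $\Ic\kla{\absbig{\widehat{\psi}_{I,n-1}^K}^2}$, or $\Ic\kla{\ReT((\widehat{\psi}_{I,n}^K+\widehat{\psi}_{I,n-1}^K)\overline{\ph^K}(t_n))}$. The first two are bounded by \eqref{eq-regularityK-high} (which in fact supplies $H^{s+2}$ control). For the interpolated products, Lemma \ref{lemma-inter} and the algebra property \eqref{eq-algebra} reduce the $H^{s+1}$-bound to $H^{s+1}$-bounds on $\widehat{\psi}_{I,j}^K$ and $\ph^K(t_n)$, which are delivered by \eqref{eq-bound-hat-noncfl} of Lemma \ref{lemma-regularity-hat} together with \eqref{eq-regularityK-high} (for $n=1$ the object $\widehat{\psi}_{I,0}^K = \psi_0^K$ is instead bounded by Lemma \ref{lemma-inter} applied to \eqref{eq-regularity}). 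The middle term $\sfrac{1}{2}\tau \sinc(\sfrac{1}{2}\tau\Om)^2 \dot{u}^K(t_n)$ is directly $O(\tau)$ from its explicit prefactor and $\absbig{\sinc}\le 1$.

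No real obstacle arises: the mechanism is purely structural, the sinc operators gaining one power of $\Om$ at the cost of a factor $\tau$ while Lemma \ref{lemma-regularity-hat} and \eqref{eq-regularityK-high} provide exactly the one extra degree of spatial regularity required. Notably, the CFL condition is \emph{not} used here, because the scalar estimates $\abs{1-\sinc(\xi)}\le C\abs{\xi}$ and $\absbig{1-\sinc(\xi/2)^2}\le C\abs{\xi}$ are uniform in $\xi\in\mathbb{R}$; only the non-CFL half \eqref{eq-bound-hat-noncfl} of Lemma \ref{lemma-regularity-hat} enters.
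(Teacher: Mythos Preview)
Your proof is correct and follows essentially the same route as the paper: expand $u^K(t_n)-\widehat{v}_n^K$ and $\dot{u}^K(t_n)-\widehat{w}_n^K$ from the definitions \eqref{eq-split-hat-vw}, use the scalar bounds $\abs{1-\sinc(\xi)}\le C\abs{\xi}$ and $\abs{1-\sinc(\xi/2)^2}\le C\abs{\xi}$ to trade one derivative for a factor $\tau$, and close with the $H^{s+1}$ (or better) regularity from \eqref{eq-regularityK-high} and \eqref{eq-bound-hat-noncfl} together with Lemma~\ref{lemma-inter} and the algebra property. Your extra remarks on the $n=1$ endpoint and on the CFL condition not being needed are accurate refinements, but the argument is the same as the paper's.
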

\begin{proof}
By the definitions \eqref{eq-split-hat-vw} of $\widehat{v}_n^K$ and $\widehat{w}_n^K$, we have
\begin{align*}
u^K(t_n) - \widehat{v}_n^K &= \klabig{1-\sinc(\tau\Om)} \klabig{ u^K(t_n) +\Ic \klabig{\abs{\widehat{\psi}_{I,n-1}^K}^2}}  + \sfrac12 \tau \sinc\klabig{\sfrac12 \tau\Om}^2 \dot{u}^K(t_n),\\
\dot{u}^K(t_n) - \widehat{w}_n^K &= \klabig{1-\sinc\klabig{\sfrac12\tau\Om}^2} \dot{u}^K(t_n)\\
 &\qquad + \klabig{1-\sinc(\tau\Om)} \Ic \klaBig{ \ReT\klabig{(\widehat{\psi}_{I,n}^K+\widehat{\psi}_{I,n-1}^K) \overline{\ph^K}(t_n)} }.
\end{align*}
Using that $\abs{1-\sinc(\xi)}\le \abs{\xi}$ and $\abs{1-\sinc\kla{\sfrac12\xi}^2}\le \abs{\xi}$, we get $\norm{(1-\sinc(\tau\Om))v}_s \le \tau \norm{v}_{s+1}$ and $\norm{(1-\sinc\kla{\sfrac12\tau\Om}^2)v}_s \le \tau \norm{v}_{s+1}$, respectively. The claimed estimates of the differences thus follow from the regularity properties \eqref{eq-regularityK-high} and \eqref{eq-bound-hat-noncfl} and the algebra property \eqref{eq-algebra} in combination with Lemma \ref{lemma-inter}. 
\end{proof}

\begin{lemma}[Local error in $\psi_P^K$]\label{lemma-local-psiP}
Under the regularity assumption \eqref{eq-regularity} and under the CFL condition \eqref{eq-cfl}, we have, for $0\le t_n-t_0\le T$,
\[
\normbig{\psi_P^K(t_n) - \widehat{\psi}_{P,n}^K}_{s+2} \le C \tau.
\]
\end{lemma}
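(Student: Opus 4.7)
The plan is to decompose $\psi_P^K(t_n)-\widehat{\psi}_{P,n}^K$ into two contributions, one coming from the difference of the two ``Poisson-like'' resolvents $(-\Delta+1)^{-1}$ (which agrees with $(\Om^2+1)^{-1}$ on trigonometric polynomials of degree $K$) and $(\Om^2\phi(\iu\tau\Om^2)+1)^{-1}$ appearing in \eqref{eq-zakK-new-2} and \eqref{eq-split-hat-psi}, and one from the difference of their arguments. Setting
\[
A = \iu\ph^K(t_n) + \psi_I^K(t_n) - u^K(t_n)\,\psi_I^K(t_n), \qquad
B = \iu\ph^K(t_n) + \widehat{\psi}_{I,n}^K - \widehat{v}_n^K\,\phi(\iu\tau\widehat{v}_n^K)\,\e^{\iu\tau\Om^2}\widehat{\psi}_{I,n}^K,
\]
one has
\[
\psi_P^K(t_n) - \widehat{\psi}_{P,n}^K = \klaBig{(\Om^2+1)^{-1} - \klabig{\Om^2\phi(\iu\tau\Om^2)+1}^{-1}} \Ic(A) + \klabig{\Om^2\phi(\iu\tau\Om^2)+1}^{-1}\Ic(A-B).
\]

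The first summand is bounded in $H^{s+2}$ by $C\tau\norm{\Ic(A)}_{s+2} \le C\tau$, on applying Lemma \ref{lemma-poisson}(b) with $s$ replaced by $s+2$ and controlling $\norm{\Ic(A)}_{s+2}$ through Lemma \ref{lemma-inter}, the algebra property \eqref{eq-algebra}, and the regularity bounds \eqref{eq-regularityK-high} on $\ph^K$, $\psi^K=\psi_I^K$ and $u^K$. For the second summand, Lemma \ref{lemma-poisson}(a) and Lemma \ref{lemma-inter} reduce the task to showing $\norm{A-B}_s \le C\tau$. I would then split
\begin{align*}
A-B ={}& \klabig{\psi_I^K(t_n)-\widehat{\psi}_{I,n}^K} - \klabig{u^K(t_n)-\widehat{v}_n^K}\psi_I^K(t_n) - \widehat{v}_n^K\klabig{\psi_I^K(t_n)-\widehat{\psi}_{I,n}^K}\\
&- \widehat{v}_n^K\klabig{1-\phi(\iu\tau\widehat{v}_n^K)\,\e^{\iu\tau\Om^2}}\widehat{\psi}_{I,n}^K
\end{align*}
and estimate the first three contributions by $C\tau$ in $H^s$ using Lemmas \ref{lemma-local-psiI} and \ref{lemma-local-vw}, the algebra property \eqref{eq-algebra}, and the $H^s$-bounds on $\psi_I^K$ and $\widehat{v}_n^K$ provided by \eqref{eq-regularityK-high} and \eqref{eq-bound-hat-noncfl}.

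The hardest part is the last contribution, in which the operator $\phi(\iu\tau\widehat{v}_n^K)\,\e^{\iu\tau\Om^2}$ appears in the intermediate scheme but not in the semi-discrete solution. I would split
\[
1 - \phi(\iu\tau\widehat{v}_n^K)\,\e^{\iu\tau\Om^2} = \klabig{1-\e^{\iu\tau\Om^2}} - \klabig{\phi(\iu\tau\widehat{v}_n^K)-1}\,\e^{\iu\tau\Om^2}.
\]
For the first piece, $\abs{1-\e^{\iu\xi}}\le\abs{\xi}$ together with the CFL condition \eqref{eq-cfl}, which bounds $\tau\Om^2$ on trigonometric polynomials of degree $K$, yields $\norm{(1-\e^{\iu\tau\Om^2})v}_s \le C\tau\norm{v}_{s+2}$; this is applied to $\widehat{\psi}_{I,n}^K$, whose $H^{s+2}$-bound comes from \eqref{eq-bound-hat-noncfl}. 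For the second piece, I would use $\phi(\xi)-1 = \xi\,\widetilde\phi(\xi)$ with the entire function $\widetilde\phi(\xi)=(\e^\xi-1-\xi)/\xi^2$, together with the algebra property \eqref{eq-algebra} and the series bound \eqref{eq-exp} for $\widetilde\phi(\iu\tau\widehat{v}_n^K)$, to obtain $\norm{\phi(\iu\tau\widehat{v}_n^K)-1}_s \le C\tau\norm{\widehat{v}_n^K}_s$, which is $O(\tau)$ by \eqref{eq-bound-hat-noncfl}. Combined with the isometry of $\e^{\iu\tau\Om^2}$ on $H^s$, the $H^s$-bound on $\widehat{\psi}_{I,n}^K$, and one final application of \eqref{eq-algebra} to multiplication by $\widehat{v}_n^K$, this gives the desired $O(\tau)$ estimate in $H^s$ and hence the claimed bound in $H^{s+2}$.
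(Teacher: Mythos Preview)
Your proof is correct and follows essentially the same route as the paper's: the same decomposition into a resolvent-difference term (handled with Lemma~\ref{lemma-poisson}) and an argument-difference term, and the same two key estimates $\norm{\e^{\iu\tau\Om^2}v-v}_s\le\tau\norm{v}_{s+2}$ and $\norm{\phi(v)-1}_s\le C\norm{v}_s$ (the paper's \eqref{eq-exp2}) together with Lemmas~\ref{lemma-local-psiI} and~\ref{lemma-local-vw} for the last piece. The only omission is the trivial case $n=0$, where $\widehat{\psi}_{P,0}^K=\psi_0^K=\psi_P^K(t_0)$ by definition~\eqref{eq-split-hat-start}; also, the CFL condition is not actually needed for the bound $\norm{(1-\e^{\iu\tau\Om^2})v}_s\le\tau\norm{v}_{s+2}$, which holds for all $v\in H^{s+2}$.
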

\begin{proof}
For $n=0$, we have $\psi_P^K(t_0)=\widehat{\psi}_{P,0}^K$ by definition. For $n\ge 1$, 
we write
\begin{align*}
\theta^K(t_n) &= \Ic \klabig{ \iu \ph^K(t_n) + \psi_I^K(t_n) - u^K(t_n) \psi_I^K(t_n) },\\
\widehat{\theta}_n^K &= \Ic \klabig{ \iu\ph^K(t_{n}) + \widehat{\psi}_{I,n}^K - \widehat{v}_n^K \phi\klabig{\iu\tau\widehat{v}_n^K} \e^{\iu\tau\Om^2} \widehat{\psi}_{I,n}^K }.
\end{align*}
With this notation, the error $\psi_P^K(t_n) - \widehat{\psi}_{P,n}^K$ with $\psi_P^K$ from \eqref{eq-zakK-new-2} and $\widehat{\psi}_{P,n}^K$ from \eqref{eq-split-hat-psi} can be decomposed as
\begin{multline*}
\psi_P^K(t_n) - \widehat{\psi}_{P,n}^K = \klabig{\Om^2+1}^{-1} \klabig{\theta^K(t_n) -  \widehat{\theta}_n^K} \\
 + \klaBig{\klabig{\Om^2+1}^{-1} - \klabig{\Om^2\phi\klabig{\iu\tau\Om^2}+1}^{-1} } \widehat{\theta}_n^K.
\end{multline*}

(a) We first consider the second term in this decomposition.
From Lemma \ref{lemma-inter}, the bounds \eqref{eq-bound-hat-noncfl} of Lemma \ref{lemma-regularity-hat}, the bounds \eqref{eq-regularityK-high}, the property \eqref{eq-exp} and the algebra property \eqref{eq-algebra}, we get $\norm{\widehat{\theta}_n^K}_{s+2} \le C$. By Lemma \ref{lemma-poisson}, this implies for the second term in the above decomposition that 
\[
\normBig{\klaBig{\klabig{\Om^2+1}^{-1} - \klabig{\Om^2\phi\klabig{\iu\tau\Om^2}+1}^{-1} } \widehat{\theta}_n^K}_{s+2} \le C \tau.
\]

(b) For the first term in the above decomposition, we use the property
\begin{equation}\label{eq-exp2}
\norm{\phi(v)-1}_s \le \norm{v}_s \e^{C\norm{v}_s}, \qquad v\in H^s,
\end{equation}
which follows from the exponential series and the algebra property \eqref{eq-algebra},
and we use the property
\[
\normbig{\e^{\iu\tau \Om^2} v - v}_s \le \tau \norm{v}_{s+2}, \qquad v\in H^{s+2},
\]
which follows from $\abs{\e^{\iu \xi}-1} =\abs{\int_0^{\xi} \e^{\iu \eta}\,\drm \eta}\le \abs{\xi}$.
Together with Lemmas \ref{lemma-inter}, \ref{lemma-local-psiI} and \ref{lemma-local-vw} and the bounds \eqref{eq-regularityK-high} and \eqref{eq-bound-hat-noncfl}, this yields
\[
\normbig{\klabig{\Om^2+1}^{-1} \klabig{ \theta^K(t_n)-\widehat{\theta}_n^K}}_{s+2} \le C \normbig{\theta^K(t_n)-\widehat{\theta}_n^K}_s \le C \tau.
\]
Putting the estimates of (a) and (b) together yields the claimed estimate.
\end{proof}

The local error bounds in the auxiliary variables $\psi_I^K$, $v^K$, $w^K$ and $\psi_P^K$ yield the following local error bounds in the main variables $\ph^K$, $u^K$ and $\dot{u}^K$. 

\begin{proposition}[Local error in $\ph^K$]\label{prop-local-ph}
Under the regularity assumption \eqref{eq-regularity}, we have, for $\tau\le t_n-t_0\le T$,
\[
\normbig{\ph^K(t_{n+1}) - \widehat{\ph}_{n+1}^K}_s \le C \tau^2.
\]
\end{proposition}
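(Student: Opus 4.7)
The plan is to compare the variation-of-constants formula \eqref{eq-voc} for $\ph^K(t_{n+1})$ with the one-step formula \eqref{eq-split-hat-1} for $\widehat{\ph}_{n+1}^K$. Using the identity $\e^{-\iu\tau\widehat{v}_n^K} - 1 = -\iu\tau \widehat{v}_n^K \phi(-\iu\tau \widehat{v}_n^K)$ from \eqref{eq-phifunc}, I would first rewrite the scheme as
\[
\widehat{\ph}_{n+1}^K = \e^{-\iu\tau\Om^2}\ph^K(t_n) - \iu\tau\,\e^{-\iu\tau\Om^2}\,\Ic\klaBig{\widehat{v}_n^K\phi\klabig{-\iu\tau\widehat{v}_n^K}\ph^K(t_n) + \e^{-\iu\tau\widehat{v}_n^K}\widehat{w}_n^K\phi\klabig{-\iu\tau^2\widehat{w}_n^K}\widehat{\psi}_{I,n}^K},
\]
so its free-Schr\"odinger part exactly matches the leading term of \eqref{eq-voc}. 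The remaining task is to bound the difference of the two inhomogeneities by $C\tau^2$ in $H^s$.

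Second, I would freeze the two integrands of \eqref{eq-voc} at $t=0$. For any $F\in C^1([0,\tau];H^s)$ with $F(0)\in H^{s+2}$, the left-rectangle rule together with the estimate $\absbig{\e^{\iu t \abs{j}^2}-1}\le t\abs{j}^2$ yields
\[
\normbigg{\int_0^\tau \e^{-\iu(\tau-t)\Om^2} F(t)\,\drm t - \tau\,\e^{-\iu\tau\Om^2} F(0)}_s \le C\tau^2 \klabig{\sup_{[0,\tau]}\norm{\partial_t F}_s + \norm{F(0)}_{s+2}}.
\]
Applied with $F(t)=\Ic(u^K(t_n+t)\ph^K(t_n+t))$ and $F(t)=\Ic(\dot{u}^K(t_n+t)\psi_I^K(t_n+t))$, the required $H^s$ bound on $\partial_t F$ and $H^{s+2}$ bound on $F(0)$ follow from the differential equations \eqref{eq-zakK-new-1}, the algebra property \eqref{eq-algebra}, Lemma \ref{lemma-inter} and the regularity bounds \eqref{eq-regularityK-high} (recall $\psi_I^K=\psi_P^K=\psi^K$). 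This reduces the exact inhomogeneity to $-\iu\tau\,\e^{-\iu\tau\Om^2}\,\Ic(u^K(t_n)\ph^K(t_n)+\dot{u}^K(t_n)\psi_I^K(t_n))$ up to an error of $O(\tau^2)$ in $H^s$.

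Third, I would compare the frozen exact inhomogeneity with the bracket in the rewritten scheme above. The series bounds \eqref{eq-exp} and \eqref{eq-exp2}, together with the regularity bound \eqref{eq-bound-hat-noncfl} of Lemma \ref{lemma-regularity-hat}, give $\norm{\phi(-\iu\tau\widehat{v}_n^K)-1}_s + \norm{\e^{-\iu\tau\widehat{v}_n^K}-1}_s \le C\tau$ and $\norm{\phi(-\iu\tau^2\widehat{w}_n^K)-1}_s \le C\tau^2$. Combined with the local-error bounds of Lemmas \ref{lemma-local-psiI} and \ref{lemma-local-vw}, which give $\norm{\widehat{v}_n^K - u^K(t_n)}_s + \norm{\widehat{w}_n^K - \dot{u}^K(t_n)}_s + \norm{\widehat{\psi}_{I,n}^K - \psi_I^K(t_n)}_s \le C\tau$, the algebra property \eqref{eq-algebra} and Lemma \ref{lemma-inter}, a direct telescoping of the bilinear and trilinear cross-terms yields
\[
\normBig{\widehat{v}_n^K\phi\klabig{-\iu\tau\widehat{v}_n^K}\ph^K(t_n) + \e^{-\iu\tau\widehat{v}_n^K}\widehat{w}_n^K\phi\klabig{-\iu\tau^2\widehat{w}_n^K}\widehat{\psi}_{I,n}^K - u^K(t_n)\ph^K(t_n) - \dot{u}^K(t_n)\psi_I^K(t_n)}_s \le C\tau.
\]
Multiplied by the common prefactor $\tau$ and propagated by the unitary operator $\e^{-\iu\tau\Om^2}$, this adds a further $O(\tau^2)$ contribution in $H^s$, and combining with the second paragraph completes the proof.

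The main technical obstacle is the bookkeeping in the third step, where every $\phi$- and exponential-factor must be linearized and the resulting bilinear/trilinear remainders estimated individually via the algebra property; no new conceptual ingredient is needed, since all required regularity, interpolation and local-error bounds are provided by Lemma \ref{lemma-regularity-hat} and Lemmas \ref{lemma-local-psiI}--\ref{lemma-local-vw} together with \eqref{eq-regularityK-high}.
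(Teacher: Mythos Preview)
Your proposal is correct and follows essentially the same route as the paper's proof: both compare the variation-of-constants representation \eqref{eq-voc} of $\ph^K(t_{n+1})$ with the one-step map \eqref{eq-split-hat-1}, linearize the $\phi$- and exponential factors via \eqref{eq-exp}--\eqref{eq-exp2}, and close using Lemmas~\ref{lemma-local-psiI}--\ref{lemma-local-vw} together with the regularity \eqref{eq-regularityK-high} and \eqref{eq-bound-hat-noncfl}. The only cosmetic difference is organizational: the paper first extracts the dominant linear-in-$\tau$ term on each side (its parts (a) and (b)) and then subtracts, whereas you keep the full nonlinear factors in the scheme and perform the linearization and comparison in one combined step; the required ingredients (in particular the $H^{s+2}$-regularity of $u^K\ph^K+\dot{u}^K\psi_I^K$ needed for your quadrature bound, equivalently for the paper's $\norm{\partial_t g^K}_s$) are identical.
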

\begin{proof}
(a) We first consider $\widehat{\ph}_{n+1}^K$ given by \eqref{eq-split-hat-1} and extract its dominant terms. We have
\begin{equation}\label{eq-phhat-dom}
\widehat{\ph}_{n+1}^K = \e^{-\iu\tau\Om^2} \Ic \klaBig{ \ph^K(t_n) -\iu \tau \widehat{v}_n^K \ph^K(t_n) -\iu\tau \widehat{w}_n^K \widehat{\psi}_{I,n}^K } -\iu \widehat{r}_n^K
\end{equation}
with 
\begin{align*}
\widehat{r}_n^K &= \tau \e^{-\iu\tau\Om^2} \Ic \klaBig{ \widehat{v}_n^K \klabig{ \phi(-\iu\tau\widehat{v}_n^K)-1} \ph^K(t_n) + \widehat{w}_n^K \klabig{\phi(-\iu\tau^2\widehat{w}_n^K)-1} \widehat{\psi}_{I,n}^K}.
\end{align*}
The remainder $\widehat{r}_n^K$ can be estimated with Lemma \ref{lemma-inter}, the property \eqref{eq-exp2} of the function $\phi$, the  algebra property \eqref{eq-algebra} and the bounds \eqref{eq-regularityK-high} and \eqref{eq-bound-hat-noncfl}. This yields 
\[
\normbig{\widehat{r}_n^K}_s \le C \tau^2.
\]

(b) We next consider $\ph(t_{n+1})$ as given by the variation-of-constants formula \eqref{eq-voc}. We have
\begin{equation}\label{eq-phK-dom}
\ph^K(t_{n+1}) = \e^{-\iu\tau\Om^2} \Ic \klaBig{ \ph^K(t_n) -\iu \tau u^K(t_n) \ph^K(t_n) - \iu\tau \dot{u}^K(t_n) \psi_{I}^K(t_n)} -\iu r_n^K
\end{equation}
with the quadrature error
\[
r_n^K = \int_0^\tau g^K(t) \,\drm t - \tau g^K(0), 
\]
where
\[
g^K(t) = \e^{-\iu(\tau-t)\Om^2} \Ic \klaBig{ u^K(t_n+t)\ph^K(t_n+t) + \dot{u}^K(t_n+t) \psi_I^K(t_n+t)}.
\]
In order to bound this quadrature error, we have to bound the derivative $\partial_{t} g^K$. For that purpose, we note that $\norm{\partial_t \ph^K}_s \le C$ by \eqref{eq-zakK-new-1}, the algebra property \eqref{eq-algebra} and the regularity property \eqref{eq-regularityK-high}. In addition, we use that $\norm{\partial_t \dot{u}^K}_s \le C$ by \eqref{eq-zakK-new-1} and $\norm{\partial_t \psi_I^K}_s \le C$ by \eqref{eq-zakK-new-2}.
Using \eqref{eq-regularityK-high} and Lemma \ref{lemma-inter}, this yields $\norm{\partial_t g^K}_s \le C$, and hence 
\[
\normbig{r_n^K}_s \le C \tau^2.
\]

(c) Subtracting the dominant terms \eqref{eq-phhat-dom} and \eqref{eq-phK-dom}, the claimed estimate follows from Lemmas \ref{lemma-local-psiI} and \ref{lemma-local-vw} on the local errors in $\psi_I^K$, $v^K$ and $w^K$.
\end{proof}

\begin{proposition}[Local error in $u^K$ and $\dot{u}^K$]\label{prop-local-u}
Under the regularity assumption \eqref{eq-regularity} and the CFL condition \eqref{eq-cfl}, we have, for $0\le t_n-t_0\le T$,
\[
\normvbig{\klabig{u^K(t_{n+1}),\dot{u}^K(t_{n+1})} - \klabig{\widehat{u}_{n+1}^K,\widehat{\dot{u}}_{n+1}^K} }_s \le C \tau^2.
\]
\end{proposition}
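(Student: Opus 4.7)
The plan is to mirror the structure of the proof of Proposition~\ref{prop-local-ph}: extract dominant terms on both sides and then compare them. Subtracting~\eqref{eq-split-hat-1} from the variation-of-constants formula~\eqref{eq-voc} cancels the common $R(\tau)(u^K(t_n),\dot{u}^K(t_n))$-term, leaving only the contributions of the $\abs{\psi_P^K}^2$-nonlinearity. The key algebraic observation I would use is the identity
\[
\klabig{R(\tau)-1}\begin{pmatrix} f\\ 0\end{pmatrix} = -\int_0^\tau R(\tau-t)\begin{pmatrix} 0\\ \Om^2 f\end{pmatrix}\drm t,
\]
valid for any trigonometric polynomial $f$ and verified by direct integration from the explicit form~\eqref{eq-R} of $R$ (using $\xi\sinc(\xi)=\sin(\xi)$). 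Applying it with $f=\Ic(\abs{\widehat{\psi}_{P,n}^K}^2)$ shows that $(\widehat{u}_{n+1}^K,\widehat{\dot{u}}_{n+1}^K)$ is a rectangle-rule approximation of the integral in~\eqref{eq-voc} with $\psi_P^K(t_n+t)$ frozen at $\widehat{\psi}_{P,n}^K$, and turns the local error into
\[
\begin{pmatrix} u^K(t_{n+1})-\widehat{u}_{n+1}^K\\ \dot{u}^K(t_{n+1})-\widehat{\dot{u}}_{n+1}^K\end{pmatrix} = -\int_0^\tau R(\tau-t)\begin{pmatrix} 0\\ \Om^2 F(t)\end{pmatrix}\drm t,
\qquad F(t)=\Ic\klabig{\abs{\psi_P^K(t_n+t)}^2-\abs{\widehat{\psi}_{P,n}^K}^2}.
\]

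I would then split $F(t)=F(0)+(F(t)-F(0))$ and treat the two resulting pieces separately. For the $F(0)$-piece, the identity above turns the integral back into $-(R(\tau)-1)(F(0),0)$, so estimate~\eqref{eq-bound-R-1} from the proof of Proposition~\ref{prop-stab-u} bounds its $\normv{\cdot}_s$-norm by $C\tau\norm{F(0)}_{s+2}$. The factor $\norm{F(0)}_{s+2}$ is in turn bounded by $C\tau$ via $\abs{a}^2-\abs{b}^2=\ReT((a-b)(\overline{a}+\overline{b}))$, Lemma~\ref{lemma-inter}, the algebra property~\eqref{eq-algebra}, the regularity bounds~\eqref{eq-regularityK-high} and~\eqref{eq-bound-hat-cfl} on $\psi_P^K=\psi^K$ and $\widehat{\psi}_{P,n}^K$, and the local error bound for $\psi_P^K$ of Lemma~\ref{lemma-local-psiP}. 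Hence this piece contributes $O(\tau^2)$.

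For the $F(t)-F(0)$-piece, a direct calculation from~\eqref{eq-R} gives $\normvbig{R(\tau-t)\klabig{\begin{smallmatrix} 0\\ \Om^2 v\end{smallmatrix}}}_s \le C\norm{v}_{s+2}$, so it suffices to show $\norm{F(t)-F(0)}_{s+2}\le Ct$. Writing $\psi_P^K(t_n+t)-\psi_P^K(t_n)=\int_{t_n}^{t_n+t}\ph^K(t')\,\drm t'$ (which is valid since $\psi_P^K=\psi^K$ and $\partial_t\psi^K=\ph^K$), the algebra property~\eqref{eq-algebra}, Lemma~\ref{lemma-inter} and the uniform bound $\norm{\ph^K(t')}_{s+2}\le C$ from~\eqref{eq-regularityK-high} deliver this estimate at once. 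Integrating in $t$ over $[0,\tau]$ then yields the second $O(\tau^2)$ contribution and closes the argument.

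The delicate point throughout is the $\Om^2$ factor inside the integrand, which naively would cost two spatial derivatives. It is compensated precisely by the two derivatives gained from the Ozawa--Tsutsumi-type representation of both $\psi_P^K$ and $\widehat{\psi}_{P,n}^K$, namely via $\norm{\psi_P^K}_{s+4}\le C$ from~\eqref{eq-regularityK-high} and $\norm{\widehat{\psi}_{P,n}^K}_{s+4}\le C$ from~\eqref{eq-bound-hat-cfl}, both of which rely on the CFL condition~\eqref{eq-cfl} through Lemma~\ref{lemma-poisson}. This matching of two derivatives lost against two derivatives gained is, I expect, the main obstacle one has to navigate.
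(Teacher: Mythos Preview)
Your proposal is correct and follows essentially the same route as the paper: the same identity $(R(\tau)-1)(f,0)^T=-\int_0^\tau R(\tau-t)(0,\Om^2 f)^T\,\drm t$, the same splitting into the piece governed by $\psi_P^K(t_n)-\widehat{\psi}_{P,n}^K$ (handled by~\eqref{eq-bound-R-1} and Lemma~\ref{lemma-local-psiP}) and the piece governed by $\psi_P^K(t_n+t)-\psi_P^K(t_n)=\int_{t_n}^{t_n+t}\ph^K$, and the same regularity inputs~\eqref{eq-regularityK-high},~\eqref{eq-bound-hat-cfl}. The only cosmetic difference is that the paper first extracts the remainder $r_n^K$ from the variation-of-constants formula and then subtracts~\eqref{eq-split-hat-1}, whereas you subtract first and then split $F(t)=F(0)+(F(t)-F(0))$; the two decompositions coincide term by term. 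One small remark: the bound $\norm{\psi^K}_{s+4}\le C$ of~\eqref{eq-regularityK-high} does not itself use the CFL condition (it comes from Theorem~\ref{thm-semi} and an inverse estimate), so the CFL condition enters only through~\eqref{eq-bound-hat-cfl} and Lemma~\ref{lemma-local-psiP}.
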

\begin{proof}
(a) We first extract the dominant term of $(u(t_{n+1}),\dot{u}(t_{n+1}))$ as given by the variation-of-constants formula \eqref{eq-voc}. Noting that
\[
- \int_0^\tau R(\tau-t) \begin{pmatrix} 0 \\ \Om^2 v \end{pmatrix} \,\drm t
= \klabig{R(\tau)-1} \begin{pmatrix} v \\ 0 \end{pmatrix},
\]
we have
\begin{equation}\label{eq-proof-localu-aux}
\begin{pmatrix} u^K(t_{n+1})\\ \dot{u}^K(t_{n+1}) \end{pmatrix} = R(\tau) \begin{pmatrix} u^K(t_{n})\\ \dot{u}^K(t_{n}) \end{pmatrix} + \klabig{R(\tau)-1} \begin{pmatrix} \Ic\klabig{\abs{\psi_P^K(t_n)}^2} \\ 0 \end{pmatrix} + \begin{pmatrix} r_n^K\\ \dot{r}_n^K\end{pmatrix}
\end{equation}
with 
\begin{align*}
\begin{pmatrix} r_n^K\\ \dot{r}_n^K\end{pmatrix} &= \int_0^\tau R(\tau-t) \begin{pmatrix} 0 \\ \Om^2 \Ic \klabig{ \abs{\psi_P^K(t_n)}^2 - \abs{\psi_P^K(t_n+t)}^2 } \end{pmatrix} \,\drm t.
\end{align*}
We then use that
\[
\normbig{\psi_P^K(t_n+t)-\psi_P^K(t_n)}_{s+2}= \normbigg{\int_{t_n}^{t_n+t} \ph^K(t')\,\drm t'}_{s+2} \le C \abs{t}
\]
by $\psi_P^K=\psi_I^K$ and \eqref{eq-regularityK-high}. From \eqref{eq-algebra} and \eqref{eq-regularityK-high} and from Lemmas \ref{lemma-inter} and \ref{lemma-bound-R}, we then get the bound
\[
\normvbig{\klabig{r_n^K,\dot{r}_n^K}}_s \le C \tau^2
\]
for the remainder in \eqref{eq-proof-localu-aux}. 

(b) We subtract the equation \eqref{eq-split-hat-1} for $\widehat{u}_{n+1}^K$ and $\widehat{\dot{u}}_{n+1}^K$ from \eqref{eq-proof-localu-aux}, which yields the error representation
\[
\begin{pmatrix} u^K(t_{n+1}) - \widehat{u}_{n+1}^K\\ \dot{u}^K(t_{n+1}) - \widehat{\dot{u}}_{n+1}^K \end{pmatrix} = \klabig{R(\tau)-1} \begin{pmatrix} v \\ 0 \end{pmatrix} + \begin{pmatrix} r_n^K\\ \dot{r}_n^K\end{pmatrix}. 
\]
with the error $v=\Ic\kla{\abs{\psi_P^K(t_n)}^2-\abs{\widehat{\psi}_{P,n}^K}^2}$ in $\abs{\psi_P^K}^2$.
The remainder $\kla{r_n^K,\dot{r}_n^K}$ has been estimated already in (a). 
To estimate the part with $R(\tau)-1$, we use \eqref{eq-bound-R-1} and $\norm{v}_{s+2}\le C\tau$ by the error bound of Lemma \ref{lemma-local-psiP}. 
Together with the above estimate of the remainder in \eqref{eq-proof-localu-aux}, this implies the error bound of the lemma.
\end{proof}

As \eqref{eq-split-new} is a two-term recursion, we also have to investigate the error of the starting approximation \eqref{eq-split-new-start} for $\ph$.

\begin{proposition}[Error of the starting approximation]\label{prop-error-starting}
Under the regularity assumption \eqref{eq-regularity}, we have
\[
\normbig{\ph^K(t_1)-\ph_1^K}_{s} \le C\tau.
\]
\end{proposition}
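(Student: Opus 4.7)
The plan is to split the error into a simple finite difference error and a one‑step local splitting error, and to estimate each separately. Using $\psi_0^K=\psi^K(t_0)$, write
\[
\ph^K(t_{1}) - \ph_{1}^K = \klabigg{\ph^K(t_1) - \frac{\psi^K(t_{1})-\psi^K(t_0)}{\tau}} + \frac{1}{\tau}\klabig{\psi^K(t_1)-\psi_{1}^K} =: A + B.
\]

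The first term $A$ is a quadrature-type error for $\partial_t\psi^K$ at $t_1$. Using the identity $\psi^K(t_{1})-\psi^K(t_0) = \int_0^\tau \ph^K(t_0+t)\,\drm t$, I would rewrite
\[
A = \frac{1}{\tau}\int_0^\tau \klabig{\ph^K(t_1)-\ph^K(t_0+t)}\,\drm t = \frac{1}{\tau}\int_0^\tau \int_{t_0+t}^{t_1} \partial_t \ph^K(s)\,\drm s\,\drm t,
\]
and bound $\norm{\partial_t\ph^K(s)}_s\le C$ by combining the Schr\"odinger equation from \eqref{eq-zakK-new-1}, the algebra property \eqref{eq-algebra}, Lemma \ref{lemma-inter}, and the regularity bounds \eqref{eq-regularityK-high}. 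This yields $\norm{A}_s\le C\tau$.

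For $B$ I would use the variation-of-constants formula for $\psi^K$ (the Schr\"odinger part of \eqref{eq-zakK}) together with the identity $\e^{-\iu\tau v_1^K}=1-\iu\tau v_1^K \phi(-\iu\tau v_1^K)$ and the fact $\Ic(\psi_0^K)=\psi_0^K$, giving
\[
\psi^K(t_1)-\psi_1^K = -\iu\!\int_0^\tau\!\! \e^{-\iu(\tau-t)\Om^2}\Ic\klabig{u^K(t_0+t)\psi^K(t_0+t)}\drm t + \iu\tau\e^{-\iu\tau\Om^2}\Ic\klabig{v_1^K \phi(-\iu\tau v_1^K)\psi_0^K}.
\]
Freezing the integrand at $t=0$ introduces a remainder of order $\tau^2$ in $H^s$, by the Lipschitz continuity of $u^K\psi^K$ and $\e^{-\iu(\tau-t)\Om^2}$ in $t$ (as in step (b) of the proof of Proposition \ref{prop-local-ph}). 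The leading terms then combine into
\[
\iu\tau\, \e^{-\iu\tau\Om^2}\Ic\klaBig{\klabig{v_1^K\phi(-\iu\tau v_1^K) - u_0^K}\psi_0^K},
\]
and it remains to show $\norm{v_1^K\phi(-\iu\tau v_1^K) - u_0^K}_s\le C\tau$. Using \eqref{eq-exp2} gives $\norm{v_1^K(\phi(-\iu\tau v_1^K)-1)}_s \le C\tau$, and using the definition \eqref{eq-vn-1} of $v_1^K$ together with the $\sinc$-bounds $\norm{(1-\sinc(\tau\Om))v}_s\le \tau\norm{v}_{s+1}$ and $\norm{(1-\sinc(\tfrac12\tau\Om)^2)v}_s\le \tau\norm{v}_{s+1}$ already employed in Lemma \ref{lemma-local-vw} gives $\norm{v_1^K-u_0^K}_s\le C\tau$ from the regularity \eqref{eq-regularityK-high}. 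The algebra property then yields $\norm{\psi^K(t_1)-\psi_1^K}_s\le C\tau^2$, so $\norm{B}_s\le C\tau$.

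The main obstacle is the second term $B$, which is essentially a first‑step analogue of the local error analysis in Proposition \ref{prop-local-ph}. The delicate point is matching the time‑averaged phase $v_1^K\phi(-\iu\tau v_1^K)$ to the instantaneous potential $u_0^K$ with an error of order $\tau$ in $H^s$, which is made possible by the extra regularity $u^K(t_0),\dot u^K(t_0)\in H^{s+2}$ and $\psi_0^K\in H^{s+4}$ available from \eqref{eq-regularityK-high} thanks to the assumption $\si\ge 2$.
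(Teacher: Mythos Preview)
Your proof is correct and follows essentially the same architecture as the paper: the same decomposition into a quadrature error $A$ and a one-step splitting error $B=\tau^{-1}(\psi^K(t_1)-\psi_1^K)$, the same variation-of-constants treatment of $B$, and the same reduction to the estimate $\norm{v_1^K-u_0^K}_s\le C\tau$.

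The one noteworthy difference is in how that last estimate is obtained. The paper routes through the intermediate quantity $\widehat v_1^K$ and decomposes
\[
v_1^K - u_0^K = \klabig{v_1^K-\widehat v_1^K} + \klabig{\widehat v_1^K - u^K(t_1)} + \klabig{u^K(t_1)-u^K(t_0)},
\]
invoking Lemma~\ref{lemma-local-vw} and Proposition~\ref{prop-local-u} (the latter requiring the CFL condition \eqref{eq-cfl}). Your argument instead reads $v_1^K-u_0^K$ directly off the explicit formula \eqref{eq-vn-1} and applies the elementary $\sinc$-bounds, which is shorter, self-contained, and does not use the CFL condition at all. This is a genuine simplification; the paper's detour presumably reflects a desire to reuse already-proven local-error lemmas rather than unpack \eqref{eq-vn-1} again.
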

\begin{proof}
We start from the decomposition (note that $\ph^K=\partial_t \psi^K$ and $\tau\ph_1^K=\psi_1^K-\psi_0^K$)
\begin{equation}\label{eq-proof-start-aux}
\tau \klabig{\ph^K(t_1)-\ph_1^K} = \klabigg{\tau \ph^K(t_1) - \int_0^\tau \ph^K(t_0+t)\,\drm t} + \klabig{\psi^K(t_1)-\psi_1^K}.
\end{equation}

(a) The first term in the above decomposition is a quadrature error, which is bounded in the norm $\norm{\cdot}_s$ by $C\tau^2$ since $\norm{\partial_t \ph^K}_s \le C$ by \eqref{eq-algebra}, \eqref{eq-zakK-new-1} and \eqref{eq-regularityK-high}.

(b) As a preparation for estimating the second term in the decomposition \eqref{eq-proof-start-aux}, we show that
\[
\normbig{v_1^K-u_0^K}_s \le C \tau. 
\]
The proof of this estimate is based on the decomposition
\[
v_1^K - u_0^K = \klabig{v_1^K-\widehat{v}_1^K} + \klabig{\widehat{v}_1^K-u^K(t_1)} + \klabig{u^K(t_1)-u^K(t_0)}.
\]
The second term $\widehat{v}_1^K-u^K(t_1)$ in this decomposition can be estimated with Lemma \ref{lemma-local-vw}, and the third term can be estimated using $u^K(t_1)-u^K(t_0)=\int_0^\tau \dot{u}^K(t) \,\drm t$ and \eqref{eq-regularityK-high}. For the first term, we have
\[
v_1^K-\widehat{v}_1^K = \sinc(\tau\Om) \klabig{\widehat{u}_1^K-u^K(t_1)} - \sfrac12 \tau\sinc\klabig{\sfrac12 \tau\Om}^2 \klaBig{\widehat{\dot{u}}_1^K-\dot{u}^K(t_1)}
\]
since $\psi_{I,0}^K=\widehat{\psi}_{I,0}^K$, $u_1^K=\widehat{u}_1^K$ and $\dot{u}_1^K=\widehat{\dot{u}}_1^K$. We thus get from Proposition \ref{prop-local-u} a bound for this first term, and finally the above estimate of $v_1^K-u_0^K$.

(c) With this preparation, we now consider the second term in the decomposition \eqref{eq-proof-start-aux}. We first extract the dominant parts of $\psi^K(t_1)$ and $\psi_1^K$.
With the variation-of-constant formula for $\psi^K$ of \eqref{eq-zakK}, we get
\begin{equation}\label{eq-proof-starting-aux1}
\psi^K(t_{1}) 
 = \e^{-\iu\tau\Om^2}\psi^K(t_0) -\iu\tau \e^{-\iu\tau\Om^2} \Ic\klabig{ u^K(t_0) \psi^K(t_0) } -\iu r_0^K
\end{equation}
with the quadrature error
\[
r_0^K = \int_0^\tau \e^{-\iu(\tau-t)\Om^2} \Ic\klabig{ u^K(t_0+t) \psi^K(t_0+t) } \,\drm t - \tau \e^{-\iu\tau\Om^2} \Ic\klabig{ u^K(t_0) \psi^K(t_0) }.
\]
Using $\partial_t\psi^K=\ph^K$ and the bounds \eqref{eq-regularityK-high}, this quadrature error is seen to be bounded in the norm $\norm{\cdot}_{s}$ by $C\tau^2$. 
On the other hand, we have for $\psi_1^K$ (see \eqref{eq-split-new-start})
\begin{equation}\label{eq-proof-starting-aux2}
\psi_1^K = \e^{-\iu\tau\Om^2} \Ic\klabig{ \psi_0^K - \iu\tau u_0^K \psi_0^K} - \iu \widehat{r}_0^K
\end{equation}
with the remainder
\[
\widehat{r}_0^K = \tau \e^{-\iu\tau\Om^2} \Ic\klaBig{ v_1^K \klabig{\phi(-\iu\tau v_1^K)-1} \psi_0^K + \klabig{v_1^K - u_0^K} \psi_0^K }. 
\]
By \eqref{eq-vn-1}, we have $\norm{v_1^K}_s \le C$, and hence the term $\phi(-\iu\tau v_1^K)-1$ in the above remainder can be estimated with \eqref{eq-exp2}. The term $v_1^K - u_0^K$ has been estimated in (b). In this way we get
\[
\normbig{\widehat{r}_0^K}_s \le C \tau
\]
by the bounds \eqref{eq-regularityK-high} (with $t=t_0$), the algebra property \eqref{eq-algebra} and Lemma \ref{lemma-inter}. As the extracted dominant parts of $\psi^K(t_1)$ in \eqref{eq-proof-starting-aux1} and $\psi_1^K$ in \eqref{eq-proof-starting-aux2} are identical by the choice of initial values \eqref{eq-split-start}, we get the estimate as stated in the proposition.
\end{proof}

\subsection{Proof of Theorem \ref{thm-main}}\label{subsec-proof-main}

We finally put the local error bounds of Section \ref{subsec-localerror} and the stability estimates of Section \ref{subsec-stability} together to deduce the error bound of Theorem \ref{thm-main}. 

We start from the decomposition \eqref{eq-decom} and use Propositions \ref{prop-stab-ph} and \ref{prop-stab-u} on the stability of the method and Propositions \ref{prop-local-ph} and \ref{prop-local-u} on the local error. As long as the numerical solution satisfies the bounds \eqref{eq-reg-numerical}, this yields for $n\ge 1$
\[
e_{n+1}^K \le (1+C\tau) e_n^K + C \tau^2 
\]
for the global error $e_{n+1}^K$ of \eqref{eq-errormax}. For $n=0$, we get $e_1^K \le C \tau$ from Propositions \ref{prop-stab-u}, \ref{prop-local-u} and \ref{prop-error-starting} (recall that $u_0^K=u^K(t_0)$, $\dot{u}_0^K=\dot{u}^K(t_0)$ and $e_0^K=0$). 
Solving this recursion yields the error bound
\[
e_{n+1}^K \le C \tau.
\]
This error bound justifies that the regularity assumption \eqref{eq-reg-numerical} on the fully discrete solution also holds for $n+1$ instead of $n$. In fact, for sufficiently small $\tau$ and $0\le t_{n+1}-t_0\le T$, the regularity \eqref{eq-reg-numerical} follows from the error bound and the bounds \eqref{eq-regularityK-high} on $u^K$, $\dot{u}^K$ and $\ph^K$.

To get the statement of Theorem \ref{thm-main}, we have to translate the error bound in the new variables back to the original variables \eqref{eq-zakK} and \eqref{eq-split}. This is done with Lemmas \ref{lemma-stab-psiP} and \ref{lemma-local-psiP}, which yield the error bound
\[
\normbig{\psi^K(t_n)-\psi_{n}^K}_{s+2} = \normbig{\psi_P^K(t_n)-\psi_{P,n}^K}_{s+2} \le C \klabig{ \tau + e_n^K} \le C \tau.
\]
The proof of Theorem \ref{thm-main} is thus complete.

\section{Examples}\label{sec-exp}

\subsection{Numerical illustration of the error bound}

We consider the Zakharov system \eqref{eq-zak} and its discretization by the Lie--Trotter splitting \eqref{eq-split} in one space dimension ($d=1$). 
In order to illustrate the temporal error bound of Theorem \ref{thm-main}, we choose initial values in such a way that 
\[
\psi(t_0) \in H^{s+\si+2}, \qquad u(t_0) \in H^{s+\si+1}, \qquad \dot{u}(t_0) \in H^{s+\si} 
\]
for $s=1$ and $\si=2$, but not for $\si\ge 2.01$. More precisely, we choose
\begin{equation}\label{eq-init-numexp}
\psi(t_0) = w_5, \quad u(t_0) = w_4, \quad \dot{u}(t_0) = w_3,
\end{equation}
where
\[
w_{s'}(x) = \sum_{j\in\mathbb{Z}} \frac{2}{\max(\abs{j},1)^{s'+0.51}} \,\, \e^{\iu j\cdot x}, \qquad s'\ge 0.
\]
With these initial values, we apply the method \eqref{eq-split} with various time step-sizes and spatial discretization parameters. The temporal errors
\[
\normbig{\psi^K(t_n)-\psi_n^K}_{s+2}, \qquad \normbig{u^K(t_n)-u_n^K}_{s+1}, \qquad \normbig{\dot{u}^K(t_n)-\dot{u}_n^K}_{s}
\]
at time $t_n=t_0+\frac12 $ are plotted in Figure \ref{fig-error} versus the time step-size. A reference solution is computed using the standard fourth-order Runge--Kutta method with small step-size $\tau=10^{-7}$. As expected from Theorem \ref{thm-main}, we observe first-order convergence under the CFL condition \eqref{eq-cfl}.

\begin{figure}
\centering\includegraphics[trim=27 2 27 0,clip]{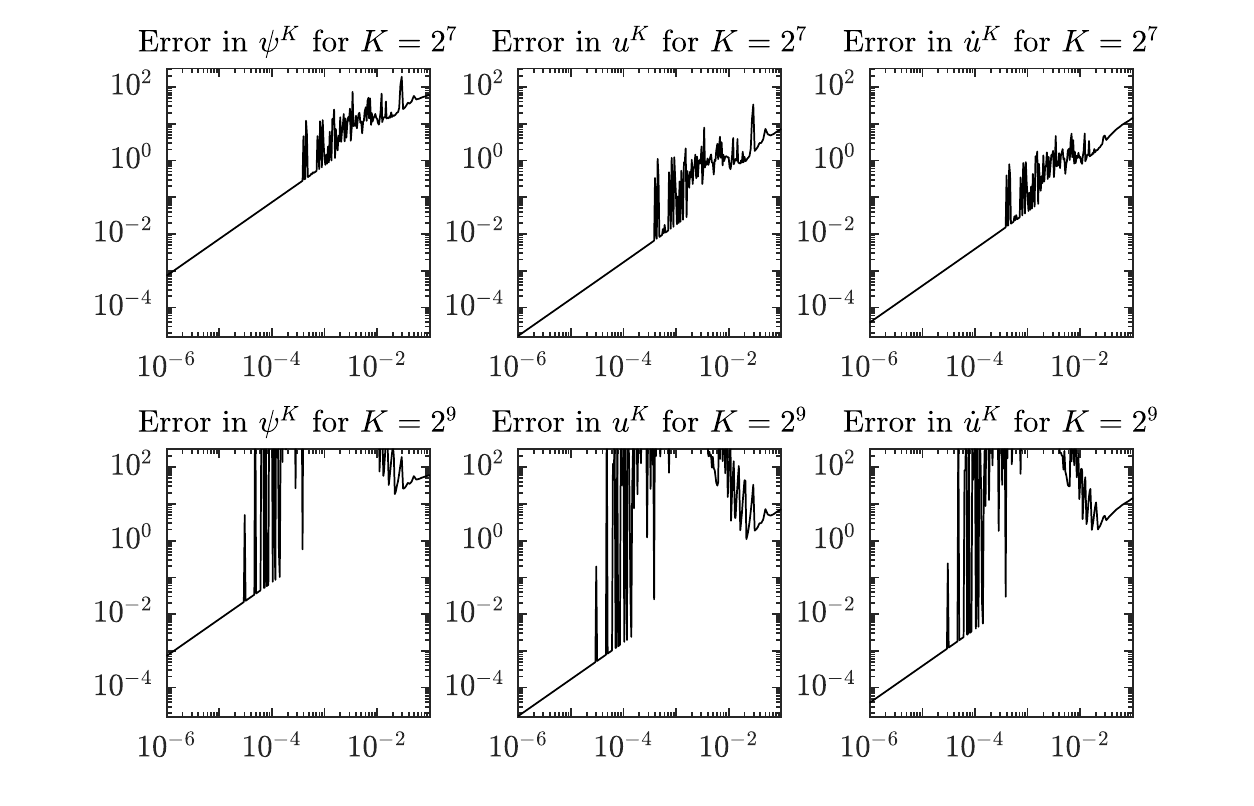}
\caption{Errors $\norm{\psi^K(t_n)-\psi_n^K}_{s+2}$ (left), $\norm{u^K(t_n)-u_n^K}_{s+1}$ (middle) and $\norm{\dot{u}^K(t_n)-\dot{u}_n^K}_{s}$ (right) vs.\ time step-size $\tau$ for $K=2^7$ (top) and $K=2^9$ (bottom).}\label{fig-error}
\end{figure}

\subsection{Direct estimates and loss of spatial regularity}

We consider the numerical method in the original variables \eqref{eq-split} and illustrate here, how direct estimates in these variables can lead to a loss of spatial regularity. In the following discussion, all mentioned bounds should hold and are supposed to hold \emph{uniformly} in the spatial discretization parameter $K$.

Assuming that $\psi_{n}^K$ is bounded in $H^{s+2}$, we get from equation \eqref{eq-split-1} for $u_{n+1}^K$ and $\dot{u}_{n+1}^K$ the desired stability estimate 
\[
\normvbig{\klabig{u_{n+1}^K,\dot{u}_{n+1}^K}}_{s'} \le \e^{C\tau} \normvbig{\klabig{u_{n}^K,\dot{u}_{n}^K}}_{s'} + C \tau \norm{\psi_n^K}_{s+2},
\]
but only for $s'\le s$ (for $s'>s$, we would loose the factor $\tau$ in front of $\psi_n^K$), see Lemma \ref{lemma-bound-R} and \eqref{eq-bound-R-1}. This means, that we can estimate $v_{n+2}^K$ given by \eqref{eq-vn-1} only in $H^{s+1}$. The equation \eqref{eq-split-1} for $\psi_{n+2}^K$ then suggests, however, that also $\psi_{n+2}^K$ can only be estimated in $H^{s+1}$, see \eqref{eq-exp}. We thus loose one order of the Sobolev space when compared to $\psi_n^K$ (which is bounded in $H^{s+2}$). This is the formal loss of spatial derivatives, which occurs in this type of naive estimate.

As we have seen in Section \ref{sec-proof}, this loss of derivatives can be circumvented by estimating in the new variables \eqref{eq-split-new} instead of the original variables \eqref{eq-split} and by imposing the CFL condition \eqref{eq-cfl}.

\subsection{Numerical loss of spatial regularity}

We illustrate numerically that the formal loss of regularity in the numerical solution \eqref{eq-split} described in the previous section is not just formal but is actually there, as soon as we don't impose the CFL condition \eqref{eq-cfl} (or use some additional filtering in the nonlinearity as it is done in \cite{Jin2004}). 

We use again the initial values \eqref{eq-init-numexp} in dimension $d=1$ such that $\psi_0^K\in H^{s+\si+2}$, $u_0^K\in H^{s+\si+1}$ and $\dot{u}_0^K\in H^{s+\si}$ with $s+\si=3$. We use a fixed spatial discretization parameter $K=2^8$ and four different time step-sizes
\[
\tau = 9.5\cdot 10^{-5}, \qquad \tau = 9.7\cdot 10^{-5}, \qquad \tau = 9.5\cdot 10^{-6}, \qquad \tau = 9.7\cdot 10^{-4}.
\]
The first of these time step-sizes is slightly below the limit $\tau = 2\pi/K^2 \approx 9.5874 \cdot 10^{-5}$ of the CFL condition \eqref{eq-cfl}, while the second one is slightly above. The third time step-size is far below this limit and the fourth one far above.

\begin{figure}
\centering\includegraphics[trim=24 9 34 6,clip]{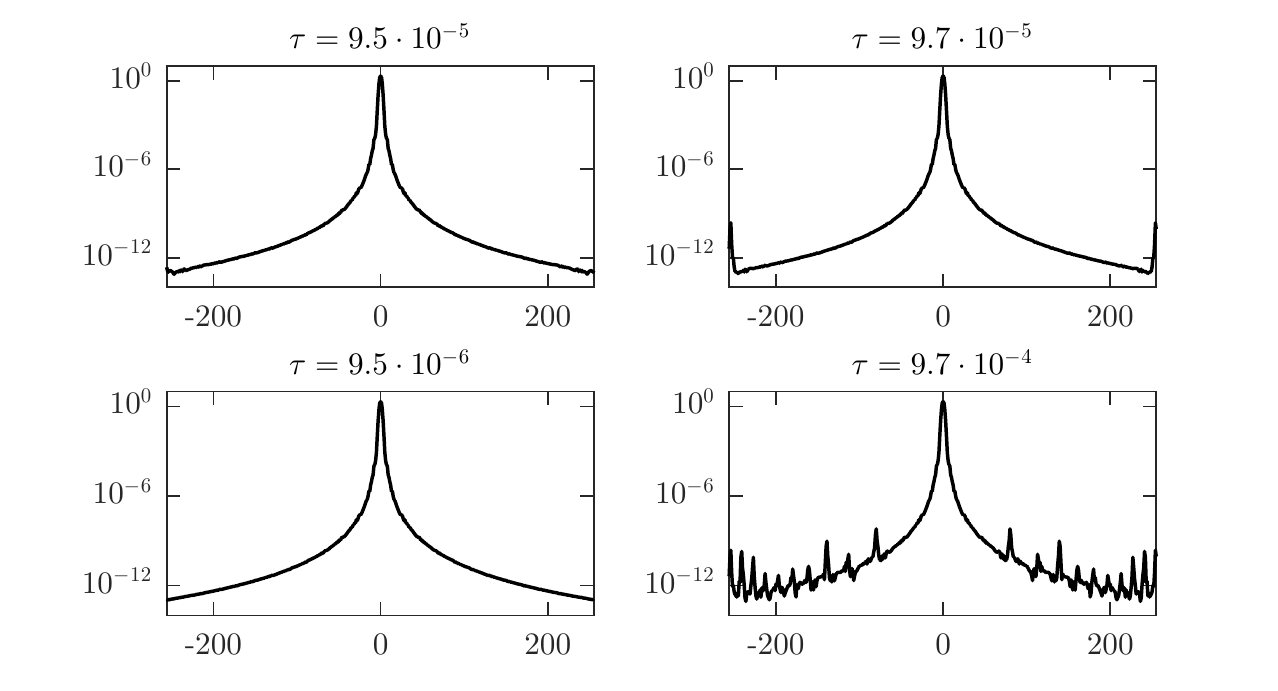}
\caption{Absolute values of the Fourier coefficients of $\psi_n^K$ at time $t_n\approx 0.15$ for $K=2^8$ and for four different time step-sizes $\tau$.}\label{fig-loss1}
\end{figure}

In Figure \ref{fig-loss1}, the absolute values of the Fourier coefficients $\psi_{n,j}^K$ of the numerical solution
\[
\psi_n^K(x)=\sum_{j=-K}^{K-1} \psi_{n,j}^K \, \e^{\iu j\cdot x}
\]
at time $t_n\approx t_0 + 0.15$ are plotted versus $j$. If the CFL condition \eqref{eq-cfl} does not hold, we observe an instability in high Fourier modes. More precisely, those Fourier modes are affected by an instability that violate the CFL condition, i.e., the modes $\psi_{n,j}^K$ with $\tau \abs{j}^2 > 2\pi$. The reason for this behaviour becomes clear from the error analysis in Section \ref{sec-proof}: for these high Fourier modes, we can't gain regularity of $\psi=\psi_P$ by means of Lemma \ref{lemma-poisson}. 

\begin{figure}
\centering\includegraphics[trim=34 8 33 6,clip]{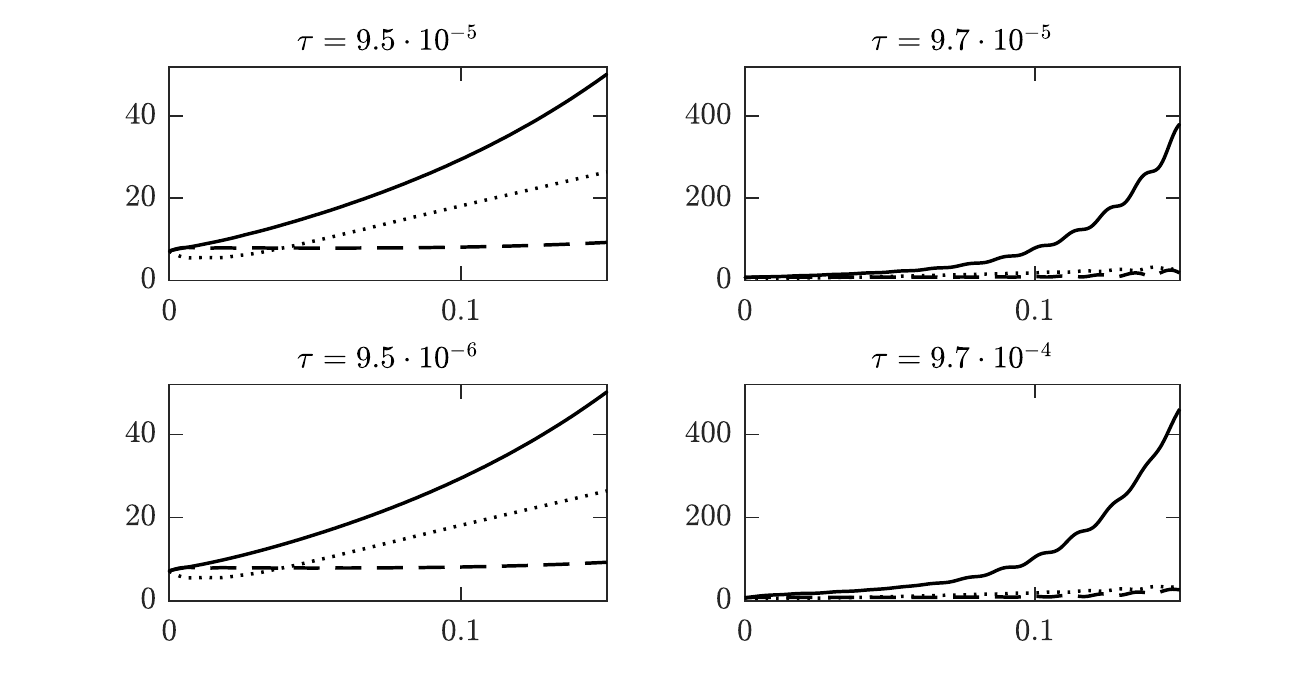}
\caption{$\norm{\psi_n^{K}}_{s+\si+2}$ (solid), $\norm{u_n^{K}}_{s+\si+1}$ (dashed) and $\norm{\dot{u}_n^{K}}_{s+\si}$ (dotted) vs.\ $t_n$ for $K=2^8$ and for four different time step-sizes $\tau$.}\label{fig-loss2}
\end{figure}

In Figure \ref{fig-loss2}, the evolution of the   norms $\norm{\psi_n^{K}}_{s+\si+2}$, $\norm{u_n^{K}}_{s+\si+1}$ and $\norm{\dot{u}_n^{K}}_{s+\si}$
along the numerical solution is plotted. We observe a significant growth if the CFL condition \eqref{eq-cfl} is violated. This illustrates the loss of spatial regularity in this case and the need for the CFL condition to avoid it.

\subsection*{Acknowledgement}

This work was supported by DFG project GA 2073/2-1 and by DFG collaborative research center 1114 ``Scaling cascades in complex systems''.


\begin{thebibliography}{10}
\addcontentsline{toc}{section}{\refname}

\bibitem{Bao2005a}
\textsc{W.~Bao, F.~Sun}, \href{http://dx.doi.org/10.1137/030600941}{Efficient and stable numerical methods for the
  generalized and vector {Z}akharov system}, \emph{SIAM J.\ Sci.\ Comput.}\
  \textbf{26} (2005), 1057--1088.

\bibitem{Bao2003a}
\textsc{W.~Bao, F.~Sun, G.~W.~Wei}, \href{http://dx.doi.org/10.1016/S0021-9991(03)00271-7}{Numerical methods for the generalized
  {Z}akharov system}, \emph{J.\ Comput.\ Phys.}\ \textbf{190} (2003), 201--228.

\bibitem{Chang1995}
\textsc{Q.~S.~Chang, B.~L.~Guo, H.~Jiang}, \href{http://dx.doi.org/10.2307/2153438}{Finite difference method for
  generalized {Z}akharov equations}, \emph{Math.\ Comp.}\ \textbf{64} (1995),
  537--553, S7--S11.

\bibitem{Chang1994}
\textsc{Q.~S.~Chang, H.~Jiang}, \href{http://dx.doi.org/10.1006/jcph.1994.1138}{A conservative difference scheme for the
  {Z}akharov equations}, \emph{J.\ Comput.\ Phys.}\ \textbf{113} (1994), 309--319.

\bibitem{Gauckler2015}
\textsc{L.~Gauckler}, \href{http://dx.doi.org/10.1137/140977217}{Error analysis of trigonometric integrators for
  semilinear wave equations}, \emph{SIAM J.\ Numer.\ Anal.}\ \textbf{53} (2015),
  1082--1106.

\bibitem{Gaucklera}
\textsc{L.~Gauckler, J.~Lu, J.~L.~Marzuola, F.~Rousset, K.~Schratz},
  \href{http://dx.doi.org/10.1090/mcom/3339}{Trigonometric integrators for quasilinear wave equations}, \emph{Math.\ Comp.}\ (2017/2018), doi:10.1090/mcom/3339.

\bibitem{Glassey1992a}
\textsc{R.~T.~Glassey}, \href{http://dx.doi.org/10.1016/0021-9991(92)90243-R}{Approximate solutions to the {Z}akharov equations via
  finite differences}, \emph{J.\ Comput.\ Phys.}\ \textbf{100} (1992), 377--383.

\bibitem{Glassey1992}
\textsc{R.~T.~Glassey}, \href{http://dx.doi.org/10.2307/2153022}{Convergence of an energy-preserving scheme for the
  {Z}akharov equations in one space dimension}, \emph{Math.\ Comp.}\ \textbf{58}
  (1992), 83--102.

\bibitem{Hayashi1993}
\textsc{N.~Hayashi}, \href{http://dx.doi.org/10.1016/0362-546X(93)90071-Y}{The initial value problem for the derivative nonlinear
  {S}chr\"odinger equation in the energy space}, \emph{Nonlinear Anal.}\
  \textbf{20} (1993), 823--833.

\bibitem{Herr}
\textsc{S.~Herr, K.~Schratz}, \href{http://dx.doi.org/10.1093/imanum/drw059}{Trigonometric time integrators for the {Z}akharov
  system}, \emph{IMA J.\ Numer.\ Anal.}\ \textbf{37} (2017), 2042--2066.

\bibitem{Hochbruck}
\textsc{M.~Hochbruck, T.~Pa\v{z}ur}, \href{http://dx.doi.org/10.1007/s00211-016-0810-5}{Error analysis of implicit {E}uler methods
  for quasilinear hyperbolic evolution equations}, {\em Numer.\ Math.}\ \textbf{135} (2016), 547--569.

\bibitem{Holden2011}
\textsc{H.~Holden, K.~H.~Karlsen, N.~H.~Risebro, T.~Tao}, \href{http://dx.doi.org/10.1090/S0025-5718-2010-02402-0}{Operator splitting
  for the {K}d{V} equation}, \emph{Math.\ Comp.}\ \textbf{80} (2011), 821--846.

\bibitem{Holden2013}
\textsc{H.~Holden, C.~Lubich, N.~H.~Risebro}, \href{http://dx.doi.org/10.1090/S0025-5718-2012-02624-X}{Operator splitting for partial
  differential equations with {B}urgers nonlinearity}, \emph{Math. Comp.}\
  \textbf{82} (2013), 173--185.

\bibitem{Jin2004}
\textsc{S.~Jin, P.~A.~Markowich, C.~Zheng}, \href{http://dx.doi.org/10.1016/j.jcp.2004.06.001}{Numerical simulation of a
  generalized {Z}akharov system}, \emph{J.\ Comput.\ Phys.}\ \textbf{201} (2004),
  376--395.

\bibitem{Jin2006}
\textsc{S.~Jin, C.~Zheng}, \href{http://dx.doi.org/10.1007/s10915-005-4929-2}{A time-splitting spectral method for the generalized
  {Z}akharov system in multi-dimensions}, \emph{J.\ Sci.\ Comput.}\ \textbf{26}
  (2006), 127--149.

\bibitem{Kishimoto2013}
\textsc{N.~Kishimoto}, \href{http://dx.doi.org/10.1007/s11854-013-0007-0}{Local well-posedness for the {Z}akharov system on the multidimensional torus}, \emph{J.\ Anal.\ Math.}\ \textbf{119} (2013), 213--253.

\bibitem{Lubich2008a}
\textsc{C.~Lubich}, \href{http://dx.doi.org/10.1090/S0025-5718-08-02101-7}{On splitting methods for {S}chr\"odinger-{P}oisson and cubic nonlinear {S}chr\"o\-din\-ger equations}, \emph{Math.\ Comp.}\ \textbf{77} (2008), 2141--2153.

\bibitem{Ozawa1992}
\textsc{T.~Ozawa, Y.~Tsutsumi}, \href{http://dx.doi.org/10.2977/prims/1195168430}{Existence and smoothing effect of solutions for
  the {Z}akharov equations}, \emph{Publ.\ Res.\ Inst.\ Math.\ Sci.}\ \textbf{28}
  (1992), 329--361.

\bibitem{Tao2006}
\textsc{T.~Tao}, \href{http://bookstore.ams.org/cbms-106/}{\emph{Nonlinear dispersive equations. {L}ocal and global analysis.}}
Vol.\ 106 of CBMS Regional Conference Series in Mathematics, 2006. 

\bibitem{Thalhammer2012a}
\textsc{M.~Thalhammer}, \href{http://dx.doi.org/10.1137/120866373}{Convergence analysis of high-order time-splitting
  pseudospectral methods for nonlinear {S}chr\"odinger equations}, \emph{SIAM J.\
  Numer.\ Anal.}\ \textbf{50} (2012), 3231--3258.

\bibitem{Zakharov1972}
\textsc{V.~E.~Zakharov}, \href{http://jetp.ac.ru/cgi-bin/dn/e_035_05_0908.pdf}{Collapse of {L}angmuir waves}, \emph{Soviet Journal of
  Experimental and Theoretical Physics}, \textbf{35} (1972), 908.

\end{thebibliography}
\end{document}